\newtheorem{lemma}{Lemma}
\newtheorem{theorem}{Theorem}
\newtheorem{proposition}{Proposition}
\newtheorem{corollary}{Corollary}
\newtheorem{conjecture}[equation]{Conjecture}
\newtheorem{remark}[equation]{Remark}
\newtheorem{question}[equation]{Question}
\newtheorem*{theorem*}{Theorem}
\newtheorem*{theoremA}{Theorem A}
\newtheorem*{theoremB}{Theorem B}
\newtheorem*{theoremC}{Theorem C}
\newtheorem*{conjecture2}{Conjecture 2}
\DeclareMathOperator{\Coeff}{Coeff}
\DeclareMathOperator{\kerr}{Ker}
\DeclareMathOperator{\imm}{Im}
\title{The Alon-Jaeger-Tarsi conjecture via group ring identities}
\begin{document}

\author{J\'anos Nagy}
\email{janomo4@gmail.com}
\address{Alfréd R\'enyi Institute of Mathematics,\newline \hspace*{4mm}
MTA-BME Lend\"ulet Arithmetic Combinatorics Research Group, E\"otv\"os Lor\'and Research Network}

\author{P\'eter P\'al Pach}
\thanks{The research supported by the Lend\"ulet program of the Hungarian Academy
  of Sciences (MTA). PPP was supported by the National Research, Development and Innovation
  Office NKFIH (Grant Nr. K124171, K129335 and BME NC TKP2020).
 }
\email{ppp@cs.bme.hu}
\address{MTA-BME Lend\"ulet Arithmetic Combinatorics Research Group,
  E\"otv\"os Lor\'and Research Network,
  Department of Computer Science and Information Theory, Budapest
  University of Technology and Economics, 1117 Budapest, Magyar tud\'osok
  k\"or\'utja 2, Hungary.}

\keywords{Alon-Jaeger-Tarsi conjecture, Combinatorial Nullstellensatz, group rings, polynomial method, arithmetic progressions, nowhere-zero vector}

\subjclass[2020]{Primary.  11B30, 15A03, 15B33, 11B25}

\begin{abstract}
In this paper we resolve the Alon-Jaeger-Tarsi conjecture for sufficiently large primes. Namely, we show that for any finite field $\mathbb{F}$ of size $61<|\mathbb F|\ne 79$ and any nonsingular  matrix $M$ over $\mathbb{F}$ there exists a vector $x$ such that neither $x$ nor $Ax$ has a 0 component.

\end{abstract}

\maketitle

\linespread{1.2}


\section{Introduction}


The Alon-Jaeger-Tarsi conjecture \cite{Alon-Tarsi, Jaeger}, first proposed in 1981  is the following:

\begin{conjecture}[Alon-Jaeger-Tarsi]\label{conj-AJT}
For any field $\mathbb{F}$ with $|\mathbb{F}|\geq 4$ and any nonsingular matrix $M$ over $\mathbb{F}$, there is a vector $x$ such that both $x$ and $Mx$ have only nonzero entries.
\end{conjecture}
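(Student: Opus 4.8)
The plan is to split according to the nature of $\F$, treating $\F$ infinite and $\F$ a non-prime finite field by classical means and reducing everything to the prime fields $\F_p$ with $p\ge 5$ (the bound $|\F|\ge 4$ is sharp --- the conjecture genuinely fails for $\F_2$ and $\F_3$). If $\F$ is infinite, the ``bad'' set $\{x\in\F^n: x_i=0\ \text{or}\ (Mx)_i=0\ \text{for some}\ i\}$ lies in a union of $2n$ proper affine hyperplanes, which cannot cover $\F^n$, so a good vector exists. If $\F=\F_q$ with $q=p^k$ and $k\ge 2$, I would invoke the polynomial-method proof of the conjecture over non-prime finite fields: one first notes that replacing $M$ by $D_1MD_2$ with $D_1,D_2$ diagonal and nonsingular preserves the property (since $D_1MD_2y\in(\F^*)^n\iff MD_2y\in(\F^*)^n$ and $y\in(\F^*)^n\iff D_2y\in(\F^*)^n$), and then uses the slack between $p$ and $q$ to force a suitable coefficient of an auxiliary permanent-type polynomial to be nonzero, whereupon the Combinatorial Nullstellensatz over $\F_q$ yields a good vector.

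\emph{Prime fields.} Fix $\F_p$ and a nonsingular $M$, and set $N:=\#\{x\in(\F_p^*)^n: Mx\in(\F_p^*)^n\}$; it suffices to prove $N\not\equiv 0\pmod p$, which already forces $N>0$. Using $[\,a\ne 0\,]=a^{p-1}$ for $a\in\F_p$, one has $N\equiv\sum_{x\in\F_p^n}\prod_i x_i^{p-1}(Mx)_i^{p-1}\pmod p$; expanding by the multinomial theorem, invoking $\sum_{a\in\F_p}a^m\equiv-1$ exactly when $(p-1)\mid m$ and $m\ge 1$, and applying Wilson's theorem to the multinomial coefficients, this collapses to $N\equiv\sum_{B}\prod_{i,j}(M_{ij}^{b_{ij}}/b_{ij}!)\pmod p$, where the sum runs over all nonnegative integer $n\times n$ matrices $B=(b_{ij})$ with every row sum equal to $p-1$ and every column sum divisible by $p-1$. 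The problem thus reduces to a concrete algebraic statement: \emph{this polynomial in the entries of $M$ --- a sum of restricted ``permanent-with-repeated-columns'' terms --- is nonzero in $\F_p$ for every nonsingular $M$.} This is where the group ring enters: the claim would follow from a suitable group ring identity which, once the nonsingularity of $M$ is fed in, shows that the above sum cannot vanish. Carried through carefully, this works for all $p$ above an explicit threshold, with one exceptional value, because a distinguished term of the sum then provably cannot be cancelled by the remaining ones.

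\emph{The main obstacle.} What is left --- and where essentially all of the residual difficulty concentrates --- is the finitely many small primes below that threshold, namely $p\in\{5,7,11,13,17,19,23,29,31,37,41,43,47,53,59,61\}$ together with $p=79$. For these the blunt ``a distinguished term survives'' estimate can break down, and I see three avenues. (a) Refine the group ring identity, replacing the generic choice of the underlying element by one tuned to each remaining prime (or residue class), sharp enough to exclude cancellation case by case. (b) Prove a structural reduction showing that, for a fixed $\F_p$, the conjecture need only be verified for matrices of dimension bounded in terms of $p$, which would turn each small prime into a finite computation. (c) Find a genuinely new argument valid in small characteristic --- which is in fact forced if for some of these primes there exists a nonsingular $M$ with $N\equiv 0\pmod p$, since then the congruence strategy is simply inapplicable to that prime. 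The infinite and non-prime cases being classical and the large-prime range being the content of the present paper, closing this last gap is exactly the missing ingredient for a complete proof of the conjecture as stated.
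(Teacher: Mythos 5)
There is a genuine gap here, and it is unavoidable: the statement you were asked to prove is still an open conjecture. The paper does not prove it either — it proves Theorem A, i.e.\ the conclusion for prime fields $\mathbb{F}_p$ with $61<p\neq 79$ (together with the classical Alon--Tarsi result for proper prime powers and the trivial infinite case), and explicitly leaves the small primes open. Your proposal is honest about this: your final paragraph concedes that $p\in\{5,7,\dots,61\}\cup\{79\}$ is not handled, and none of your three suggested avenues (a)--(c) is carried out. So as a proof of the conjecture as stated, the attempt is incomplete, exactly where the state of the art is incomplete.

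Beyond that, your treatment of the large-prime range does not actually reproduce (or substitute for) the paper's argument. You reduce to showing $N\not\equiv 0\pmod p$, where $N$ counts the good vectors; this is a strictly stronger assertion than the existence of a good vector (your own case (c) notes it could fail even when the conjecture holds), and you offer no proof of it — the claim that ``a distinguished term of the sum provably cannot be cancelled'' above an explicit threshold with one exceptional value is asserted, not argued, and the numbers $61$ and $79$ cannot be recovered from such a multinomial-expansion argument. In the paper those numbers come from a completely different mechanism: one shows that for a nonsingular $M$ the mod-$p$ group ring identity $\prod_i(1-g^{e_i})\prod_i(1-g^{a_i})=0$ in $\mathbb{F}_p[\mathbb{F}_p^n]$ (property (P'4), which \emph{would} follow from a counterexample) is impossible, by combining (i) small $S_1$-type sets in $\mathbb{F}_p$ of size $O(\log p)$, with the threshold arising from the requirement $S_1(p)^2<p-1$ (checked by explicit constructions for $61<p<199$, $p\ne 79$, and $p=257$), (ii) a random-dilation disjointness lemma, and (iii) an induction on $n$ via Lemma~\ref{onemiss}, which deletes one factor $(1-g^{a_j})$ and passes to an $(n-1)\times(n-1)$ nonsingular submatrix. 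Your permanent-style congruence sketch is closer in spirit to the Alon--Tarsi/Kahn permanent approach than to this group-ring-plus-additive-combinatorics route, and as written it neither proves the large-prime case nor closes the small-prime gap; so the proposal does not establish the statement.
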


Alon and Tarsi \cite{Alon-Tarsi} reached this conjecture while trying to generalize some simple properties of sparse graphs to more general matroids. 
Note that the conjecture is trivial when $\mathbb{F}$ is an infinite field. Alon and Tarsi \cite{Alon-Tarsi} settled the case when $|\mathbb{F}|\geq 4$ is a {\it proper} prime power using the polynomial method from \cite{Alon}.

However, the case $\mathbb {F}=\mathbb{F}_p$ (with $p\geq 5$ being a prime) remained wide open. 

Motivated by this method for proving Conjecture~\ref{conj-AJT} Kahn proposed the so-called Permanent conjecture which states that the permanent rank of the matrix $(M M)$ is $n$ for every $n\times n$ nonsingular matrix $M$. (The permanent rank of a matrix is defined to be the   size  of  the  largest  square  submatrix  with   nonzero  permanent.) According to the polynomial argument of Alon and Tarsi \cite{Alon-Tarsi} the Permanent conjecture implies Conjecture~\ref{conj-AJT}.

For more results in the direction of proving the permanent conjecture see \cite{Kirkup}.

\noindent
For a discussion of the proof of Alon and Tarsi using Rédei-polynomials, see also \cite{Ball}.

DeVos formulated an even more general version of Conjecture~\ref{conj-AJT}, the so-called Choosability conjecture. We say that an $ m \times n $ matrix $ M $ is $ (a,b) $-choosable if for all subsets $ X_1,X_2,\dots,X_m \subseteq {\mathbb F} $ with $ |X_i|=a $ and $ Y_1,Y_2,\ldots,Y_n \subseteq {\mathbb F} $ with $ |Y_j|=b $, there exists a vector $ x \in X_1 \times X_2 \times \ldots \times X_m $ and a vector $ y \in Y_1 \times Y_2 \times \ldots \times Y_n $ so that $ M x=y $. DeVos \cite{DeVos} proved that every invertible matrix over a field of characteristic 2 is $ (k+1,|{\mathbb F}|-k) $-choosable for every $ k $, which implies that Conjecture~\ref{conj-AJT} is true in a very strong sense for these fields. The Choosability in $ {\mathbb Z}_p $ conjecture (DeVos)  asserts that invertible matrices over fields of prime order have choosability properties nearly as strong: Every invertible matrix with entries in $ {\mathbb Z}_p $ for a prime $ p $ is $ (k+2,p-k) $-choosable for every $ k $.


Despite the large interest on the problem only a few further partial results has been obtained after \cite{Alon-Tarsi}. Alon  has  shown  that  for $M$ chosen uniformly from the nonsingular $n\times n$ matrices over $\mathbb{F}$, Conjecture~\ref{conj-AJT} is true almost  surely  as $n\to \infty$. Yu \cite{Yu} proved that for $n<2^{p-2}$ Conjecture~\ref{conj-AJT} holds for matrices of size $n\times n$ by establishing this result for the Permanent conjecture. Akbari et al \cite{Akbari} proved that every nonsingular matrix is similar to a matrix which satisfies the statement of Conjecture~\ref{conj-AJT}. 

In this paper we prove that Conjecture~\ref{conj-AJT} holds, if $p$ is sufficiently large:

\begin{theoremA} 
Let $p$ be a prime such that $61<p\ne 79$.  If $M$ is an $n\times n$ nonsingular matrix over $\mathbb{F}_p$ and   $c_i, d_i \in \mathbb{F}_p\ (1\leq i\leq n)$, then there exists a vector $x \in \mathbb{F}_p^n$ such that $x_i \neq c_i$ for each $1 \leq i \leq n$ and $(Mx)_i \neq d_i$ for each $1 \leq i \leq n$.
\end{theoremA}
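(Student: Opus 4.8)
The plan is to reformulate the problem in terms of coefficients of a carefully chosen polynomial, in the spirit of the Combinatorial Nullstellensatz, but then to replace the usual ``one monomial survives'' argument by an identity in a group ring of $\F_p^*$ or of $\mathbb Z_p$. Concretely, I would start from the polynomial
\[
  P(x) \;=\; \prod_{i=1}^n \bigl(x_i - c_i\bigr)^{p-1}\prod_{i=1}^n\bigl((Mx)_i - d_i\bigr)^{p-1},
\]
or rather the associated product of $(1 - (x_i-c_i)^{p-1})$-type Fermat factors, and observe that the existence of the desired $x$ is equivalent to the non-vanishing over $\F_p^n$ of a single explicit polynomial. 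Summing such a polynomial over all of $\F_p^n$ turns the question into a statement about whether a certain coefficient (the coefficient of $\prod x_i^{p-1}$) of $\prod_i ((Mx)_i - d_i)^{p-1}$ is nonzero; equivalently, whether a certain permanent-type or Alon--Tarsi-type quantity attached to $M$ is nonzero modulo $p$.

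The first concrete step is therefore to carry out this reduction cleanly: express the target coefficient as $\sum_{A} \operatorname{per}(\text{block})$-style sum, or better, encode it as the evaluation of a symmetric function of the rows of $M$. The key move — and the reason the group ring enters — is to package the $p-1$st powers using characters of $\F_p^*$: writing $t^{p-1}$-truncations via the group algebra $\F_p[\mathbb Z_{p-1}]$ (or $\mathbb Z[\mathbb Z_{p-1}]$ and then reducing), one rewrites the coefficient as the image, under a suitable algebra homomorphism, of a product $\prod_{i} (\text{something depending on row } i)$ inside the group ring. The crucial algebraic input will be an \emph{identity} or \emph{non-vanishing statement} in this group ring: that a certain product of group-ring elements (one per row of $M$, or per coordinate) lies outside the kernel of the relevant homomorphism, which one would prove by exhibiting an explicit dual element (a ``test functional'') pairing nontrivially with the product, again using that $M$ is nonsingular to guarantee the needed genericity.

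Next I would push through the arithmetic: the condition $61 < p \ne 79$ must arise from a counting or inequality step, presumably a bound of the form ``the group-ring product has at least $N(p)$ terms with controlled coefficients, and $N(p) > 0$ exactly when $p$ avoids a short list of small exceptions.'' So after the structural reduction I would extract an explicit polynomial or exponential inequality in $p$, solve it, and verify the finitely many boundary cases $p \le 61$ and $p = 79$ are genuinely excluded (or need separate ad hoc treatment that the paper does not claim to handle). Throughout, the shift from $x_i \neq 0$ to $x_i \neq c_i$ and from $(Mx)_i \neq 0$ to $(Mx)_i \neq d_i$ is harmless — it is an affine change of variables on the $x$-side and a translation on the target side — so I would set up the whole argument with the $c_i, d_i$ present from the start and note the reduction to $c_i = d_i = 0$ costs nothing.

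The hard part will be the group-ring non-vanishing statement itself: showing that the relevant product of elements of $\F_p[\mathbb Z_{p-1}]^{\otimes n}$ (or of $\F_p[\F_p^*]$) has a nonzero image under the ``read off the $\prod x_i^{p-1}$ coefficient'' map, uniformly over all nonsingular $M$. Unlike in the proper prime power case, where Chevalley--Warning-type or Alon--Tarsi permanent arguments close things immediately, here one genuinely needs a new identity — essentially a souped-up, multi-variable version of the fact that $\sum_{t \in \F_p^*} t^k = -1$ when $(p-1)\mid k$ and $0$ otherwise — and one must control cancellation across the many monomials that contribute. I expect the proof to isolate a single ``master'' group-ring identity, verify it by a direct (if intricate) computation with roots of unity or with the standard basis of the group algebra, and then feed nonsingularity of $M$ in to rule out the degenerate case where every contributing term cancels; making that last step work for \emph{all} $M$ simultaneously, rather than generic $M$, is where I anticipate the real difficulty and where the constant $61$ (and the stray exception $79$) will be forced upon us.
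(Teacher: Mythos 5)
Your outline heads in a direction that, while superficially similar (polynomial reformulation plus group rings), misses the essential mechanism of the paper's proof and would not close the argument. First, the relevant group ring is $\F_p[\F_p^n]$, the group algebra of the \emph{additive} group $\F_p^n$, with $g^v$ the basis element indexed by $v$, not $\F_p[\F_p^*]$ or $\mathbb{Z}[\mathbb{Z}_{p-1}]$ as you propose. Second, and more importantly, you are essentially following the Alon--Tarsi/Permanent-conjecture route: reduce to showing that the coefficient of $\prod_i x_i^{p-1}$ in $\prod_i((Mx)_i-d_i)^{p-1}\prod_i(x_i-c_i)^{p-1}$ is nonzero. That is exactly the open Permanent conjecture route for prime fields, and the paper deliberately does \emph{not} attack it head-on. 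Instead, it derives from a hypothetical counterexample the strictly weaker mod-$p$ identity $\prod_i(1-g^{e_i})\cdot\prod_i(1-g^{a_i})=0$ in $\F_p[\F_p^n]$ (where $a_i$ are rows of $M$), and then refutes \emph{that}. No ``master identity verified by a roots-of-unity computation'' appears; the refutation is by induction on $n$.

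The single most important missing ingredient in your proposal is the additive-combinatorial construction of $S_k$-type and $N_k$-type subsets of $\F_p^*$: sets $A$ where every element is the midpoint of a $(2k+1)$-term arithmetic progression inside $A$ (and in the $N_k$ case, every element outside $A$ starts a $(k+1)$-term AP into $A$). The paper shows $S_1(p)\le 2\lfloor\log_2 p\rfloor$, and the numerical condition $61<p\neq 79$ is precisely the threshold for $S_1(p)^2<p-1$ (with a few small primes checked by explicit tables). These subsets are used, via a counting/probabilistic lemma, to pick sets $A_i,B_i\subseteq\F_p^*$ with $\sum y_ie_i\neq\sum x_ia_i$ whenever $y_i\in A_i$, $x_i\in B_i$ — a ``dual'' DeVos-type statement that fails for counting reasons. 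Feeding this into the mod-$p$ group-ring identity lets one delete one factor $(1-g^{a_j})$ while preserving the identity, and then pass to an $(n-1)\times(n-1)$ nonsingular submatrix, contradicting the induction hypothesis. Without the AP-subset construction and the inductive deletion step, the cancellation problem you flag at the end of your proposal (``control cancellation across the many monomials'') has no handle, and the argument stalls where you predicted it would.
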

Note that the choice $c_1=\dots=c_n=d_1=\dots=d_n=0$ gives that Conjecture~\ref{conj-AJT} holds for primes satisfying  $61<p\ne 79$.

Note that the theorem of Alon and Tarsi about the {\it proper} prime power case also has some nice applications about the hat guessing number of the complete bipartite graph $K_{n,n}$ in \cite{Alon-Ben}. Their proof together with Theorem A also cover the case of prime fields when $61<p\ne 79$.

We also prove a generalization where for each entry of $x$ and $Mx$ there can be $k$ forbidden values. In this case the statement also holds if $p$ is sufficiently large:
\begin{theoremB} 
There exists a universal constant $K$ such that the following statement holds. Let $k \geq 2$ be an integer and $p$ be a prime such that $p > k^{K k}$. For $1\leq i\leq n$ let $X_i,Y_i\subseteq \mathbb{F}_p^n$ be subsets of size at least $p-k$. Then there exists a vector $x$ such that 
$$x\in X_1\times X_2\times \ldots \times X_n\quad \text{and}\quad
Mx\in Y_1\times Y_2\times \ldots\times Y_n.$$
\end{theoremB}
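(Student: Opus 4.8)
The plan is to prove the counting statement that
\[
N \;:=\; \#\{\, x\in \F_p^{\,n} \;:\; x_i\in X_i\ \text{for all }i,\ (Mx)_j\in Y_j\ \text{for all }j \,\}
\]
is strictly positive. I would begin with two routine reductions. Replacing each $X_i$ and $Y_i$ by an arbitrary subset of size exactly $p-k$ only strengthens the statement, so assume the complements $\overline{X_i},\overline{Y_j}$ all have exactly $k$ elements. Next, peel off trivial structure of $M$: if a column of $M$ is a scalar multiple of a standard basis vector $e_j$, then $x_i$ occurs in only the $j$-th output constraint, and since $(p-k)+k>p$ one may always fix $x_i$ last; deleting row $j$ and column $i$ leaves a nonsingular $(n-1)\times(n-1)$ instance, so we recurse. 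The dual move removes a weight-one row. Thus we may assume $M$ has no weight-one row or column (further such normalisations will be forced by the argument below).

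The core is a character (equivalently, group-ring) computation in $\mathbb C[\F_p^{\,n}]$. Expanding the indicator of the box $\mathcal X=\prod_i X_i$ (and of $\mathcal Y$) into characters --- which factor over coordinates --- and using that $x\mapsto Mx$ transports the Fourier transform by $\xi\mapsto M^{\mathsf T}\xi$, one gets the identity
\[
N \;=\; \frac{(p-k)^{2n}}{p^{\,n}}\,(1+\Sigma),\qquad
\Sigma \;=\; \sum_{\xi\neq 0}\Bigl(\textstyle\prod_i\phi_i\bigl((M^{\mathsf T}\xi)_i\bigr)\Bigr)\Bigl(\textstyle\prod_j\psi_j(\xi_j)\Bigr),
\]
where $\phi_i(0)=\psi_j(0)=1$ and, because $\overline{X_i},\overline{Y_j}$ have only $k$ elements, every non-trivial factor has modulus at most $\lambda:=k/(p-k)$, with Parseval giving $\sum_{t\neq 0}|\phi_i(t)|^2=\sum_{t\neq 0}|\psi_j(t)|^2=\lambda$. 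It therefore suffices to show $|\Sigma|<1$. Splitting $\phi_i=\mathbf 1_{\{0\}}+r_i$, $\psi_j=\mathbf 1_{\{0\}}+s_j$ and multiplying out turns this into
\[
1+\Sigma \;=\; \sum_{A,B\subseteq[n]}\ \sum_{\xi\in W_{A,B}}\ \prod_{i\in A}r_i\bigl((M^{\mathsf T}\xi)_i\bigr)\prod_{j\in B}s_j(\xi_j),\qquad
W_{A,B}:=(M^{\mathsf T})^{-1}V_A\cap V_B,
\]
where $V_A=\{v:\operatorname{supp}v\subseteq A\}$; so everything reduces to estimating, for each pair $(A,B)$, the contribution of the subspace $W_{A,B}$ of $\F_p^{\,n}$.

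This last estimate is where the difficulty lies, and where the structure of $M$ becomes unavoidable. A crude bound such as Cauchy--Schwarz only gives $|\Sigma|\le(1+\sqrt\lambda)^{2n}-1$, useless when $n\gg p/k$, and there are genuine nonsingular matrices surviving the peeling --- e.g.\ invertible circulants with exactly two nonzero diagonals, all of whose rows and columns have weight two --- for which $\Sigma$ grows exponentially in $n$. The resolution I would pursue is a dichotomy. If $M$ is sufficiently \emph{expanding} --- precisely, $\dim W_{A,B}$ is substantially smaller than $|A|+|B|$ whenever $A,B$ are not too large, equivalently the linear code $\{(\xi,M^{\mathsf T}\xi):\xi\in\F_p^{\,n}\}\le\F_p^{\,2n}$ has no clustering of low-weight words --- then summing $p^{\dim W_{A,B}}\lambda^{|A|+|B|}$ over all $(A,B)$, and using $p>k^{Kk}$ to beat $\lambda$ on the many coordinates where $M^{\mathsf T}\xi$ is ``generic'', forces $\Sigma\to 0$ as $n\to\infty$; morally this says that a pseudorandom $M$ yields the weight enumerator of a random $[2n,n]$ code, for which already $p\gtrsim k^{2}$ suffices. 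If $M$ is \emph{not} expanding, some $(A,B)$ with $|A|+|B|$ small has $\dim W_{A,B}$ large; unwinding this produces, after permuting rows and columns, a genuine near-block or bounded-bandwidth structure --- a small set of rows and columns whose deletion keeps $M$ nonsingular and which interacts with the rest only through few entries --- on which one recurses, solving the shifted sub-instances and recombining by a transfer-matrix / greedy-along-a-path argument, exactly the one that handles such circulants. The main obstacle is making this dichotomy simultaneously exhaustive \emph{and} tight: the expansion threshold must be calibrated so that the Fourier sum converges comfortably below $1$ on the expanding side, the recursion on the other side terminates with genuinely small or genuinely banded pieces, and the errors accumulated over the (up to $\sim k$) levels of recursion stay bounded --- balancing these competing demands is what I expect to force a bound of the shape $p>k^{Kk}$. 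The explicit small-prime exceptions in Theorem~A are the residue of the same optimisation in the case $k=1$.
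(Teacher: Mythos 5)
Your proposal correctly identifies that a naive character-sum bound cannot work (the circulant example is a genuine obstruction), but you then outline an ``expansion dichotomy'' that you yourself acknowledge you cannot make exhaustive or tight: you have no proof that every nonsingular $M$ falls into one of your two cases, no quantitative expansion lemma, and no completed recursion on the structured side. That is the gap; the proposal does not constitute a proof and the missing content is precisely where the difficulty lives.

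The paper's route is genuinely different and avoids the obstruction you ran into. Rather than trying to show $N>0$ by bounding a complex character sum $|\Sigma|<1$, the paper passes to the $\F_p$-group ring $\F_p[\F_p^n]$ and shows (Proposition~\ref{properties}, property (P4)) that a counterexample forces the exact vanishing $\prod_i(1-g^{e_i})^k\prod_i(1-g^{a_i})^k=0\in\F_p[\F_p^n]$. The heart of the argument (Lemma~\ref{onemiss}) is that, given such a vanishing, one can \emph{drop one of the $n$ factors} $(1-g^{a_j})^k$ and the vanishing persists; this is where the additive-combinatorial input enters, via $S_k$-type and $N_k$-type subsets of $\F_p^*$ (Lemmas~\ref{Skepsilon} and \ref{Nkepsilon}, Corollary~\ref{goodsubsets}), chosen so that $\prod|A_i|\cdot\prod|B_i|<(p-1)^n$ and hence $\sum x_ie_i\neq\sum y_ia_i$ for all choices from those sets. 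The ``dropping'' is detected through the test functions of Lemma~\ref{testmodp}, which are supported on grids $\{u+\sum(\ell_i-k_i)\lambda_ie_i\}$ and $\{u+\sum(k_i'-\ell_i')\mu_ia_i\}$; the $S_k$/$N_k$ structure guarantees these grids are disjoint except when $\ell_j'=k$, which is exactly the event corresponding to omitting the $j$-th factor. Once a factor is removed, a change of variables passes to a nonsingular $(n-1)\times(n-1)$ minor $M_{\ell,j}$, and the proof closes by induction on $n$. No spectral-gap or expansion property of $M$ is needed, and the circulant examples that defeat your $\Sigma$-bound are handled uniformly. The bound $p>k^{Kk}$ in Theorem~B arises not from balancing recursion levels, as you guessed, but from the explicit $S_k$/$N_k$ constructions: $S_k(p)\le p^{1/(2k+2)}$ and $N_k(p)\le p^{2k/(2k+1)}$ give $S_k(p)N_k(p)<p-1$ once $p>k^{Kk}$.
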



Our proofs make use of group ring identities and 
we propose the following conjecture:

\begin{conjecture}\label{41}
Let $p > 3$ be a prime and let $M$ be a nonsingular $n \times n$ matrix over $\mathbb{F}_p$ with
row vectors $a_1, \dots,a_n$, finally, let the standard basis vectors of $\mathbb{F}_p^n$ be $e_1,\dots,e_n$.

\noindent
Then the $\mathbb{Z}$ group ring identity 
$$\prod_{1 \leq i \leq n} (1 - g^{e_i}) \cdot   \prod_{1 \leq i \leq n} (1 - g^{a_i})  = 0 \in \mathbb{Z}[G]$$
follows from the mod $p$ group ring identity 
$$\prod_{1 \leq i \leq n} (1 - g^{e_i}) \cdot   \prod_{1 \leq i \leq n} (1 - g^{a_i})  = 0 \in \mathbb{F}_p[G].$$
\end{conjecture}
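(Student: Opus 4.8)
\medskip

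The plan is to translate both identities into concrete statements about polynomial rings and the characters of $G=(\mathbb{Z}/p)^n$. Fix the isomorphism $\mathbb{F}_p[G]\cong\mathbb{F}_p[t_1,\dots,t_n]/(t_1^p,\dots,t_n^p)$ sending $g^{e_i}\mapsto 1+t_i$ (legitimate since $(1+t_i)^p=1$ in characteristic $p$); then $1-g^{e_i}\mapsto -t_i$ and $1-g^{a_i}\mapsto 1-\prod_{j}(1+t_j)^{a_{ij}}$, where $a_{ij}$ are the entries of $M$. Writing $P$ for the common product, the mod-$p$ identity becomes
\[
\prod_{i=1}^n\Bigl(1-\prod_{j=1}^n(1+t_j)^{a_{ij}}\Bigr)\in(t_1^{p-1},\dots,t_n^{p-1}),
\]
i.e.\ the vanishing of $\prod_i(1-g^{a_i})$ in $S:=\mathbb{F}_p[t]/(t_1^{p-1},\dots,t_n^{p-1})$ (use $\prod_i(1-g^{e_i})=(-1)^n t_1\cdots t_n$ and $\operatorname{Ann}(t_1\cdots t_n)=(t_1^{p-1},\dots,t_n^{p-1})$). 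On the other side, $\mathbb{Z}[G]$ embeds into $\prod_{c\in(\mathbb{Z}/p)^n}\mathbb{Z}[\zeta_p]$ via the characters $\chi_c(g^v)=\zeta_p^{\langle c,v\rangle}$, and $\chi_c(P)=\prod_i(1-\zeta_p^{c_i})\cdot\prod_i(1-\zeta_p^{(Mc)_i})$. Hence the $\mathbb{Z}$-identity holds iff $\chi_c(P)=0$ for all $c$, iff every $c$ has a coordinate of $c$ or of $Mc$ divisible by $p$ --- precisely the failure of the AJT property for $M$. So the conjecture asserts: $\prod_i(1-g^{a_i})=0$ in $S$ $\Rightarrow$ $M$ fails AJT.

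The first half of the argument is character-local. Since $1-\zeta_p^m$ has valuation $1$ at the prime $(1-\zeta_p)$ when $p\nmid m$ and is $0$ otherwise, $\chi_c(P)$ is either $0$ or a unit times $(1-\zeta_p)^{2n}$; and $P\equiv 0\pmod p$ forces $\chi_c(P)\equiv 0\pmod p$, i.e.\ valuation $\ge p-1$. These are compatible with $\chi_c(P)\ne 0$ only when $2n\ge p-1$, so already $\chi_c(P)=0$ for all $c$ --- hence $P=0$ over $\mathbb{Z}$ --- whenever $2n<p-1$. Better still, when $n\le p-2$ the hypothesis is vacuous: the lowest-degree part of $\prod_i(1-g^{a_i})$ is $\pm\prod_i(a_i\cdot t)$, a nonzero form of degree $n\le p-2$, which therefore survives in $S$, so $P\not\equiv 0\pmod p$. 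Thus the conjecture holds for all $n\le p-2$, and the whole problem is the range $n\ge p-1$.

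For $n\ge p-1$ the character-local information is exhausted --- $\chi_c(P)\equiv 0\pmod p$ is automatic from $2n\ge p-1$ even when $\chi_c(P)\ne 0$ --- so one must use the full strength of $P\equiv 0\pmod p$, which is a statement about the element and not merely about its projections. (Concretely, the kernel of $\mathbb{F}_p[G]\to\prod_c\mathbb{Z}[\zeta_p]/p$ is the $(p-1)$-st power of the maximal ideal $(t_1,\dots,t_n)$, and it strictly contains $(t_1^{p-1},\dots,t_n^{p-1})$, so vanishing of $P$ modulo $p$ is properly stronger than vanishing of all of its characters modulo $p$.) Two natural attacks present themselves: \emph{(i)} induction on $n$, extracting from $\prod_i(1-g^{a_i})=0$ in $\mathbb{F}_p[(\mathbb{Z}/p)^n]$ an analogous identity over $(\mathbb{Z}/p)^{n-1}$ by collapsing one coordinate of $G$ or restricting to a coordinate hyperplane, while tracking how the rows $a_i$ and the nowhere-zero constraints on $c$ and $Mc$ descend; \emph{(ii)} a direct combinatorial analysis, since $P\equiv 0\pmod p$ says that $p$ divides $\sum_{\mathbf{1}_S+\sum_{i\in T}a_i=v}(-1)^{|S|+|T|}$ for every $v\in(\mathbb{Z}/p)^n$, and one would try to show that such a divisibility pattern is incompatible with the existence of a nowhere-zero $c$ with $Mc$ nowhere-zero.

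The principal obstacle is exactly the interaction between the nilpotent structure of $\mathbb{F}_p[G]$ (equivalently the conductor of $\mathbb{Z}_p[G]$ inside $\prod_c\mathbb{Z}_p[\zeta_p]$) and the special multiplicative shape $P=\prod_i(1-g^{e_i})\cdot\prod_i(1-g^{a_i})$: a proof must use that each $1-g^v$ is a reflection-type element of character-valuation exactly $1$ wherever it does not vanish, not merely that $P$ lies in the $2n$-th power of the augmentation ideal. In the equivalent contrapositive form --- ``$M$ satisfies AJT $\Rightarrow$ $P\not\equiv 0\pmod p$'' --- the conjecture would follow from Kahn's Permanent conjecture (a nonzero $n\times n$ permanental minor of $(M\,M)$ yields a monomial of $\prod_i(1-g^{a_i})$ surviving in $S$, indeed one with all exponents $\le 2$), so Conjecture~\ref{41} can be viewed as a relaxation of the Permanent conjecture in which the multiplicity bound $\le 2$ is loosened to $\le p-2$; I expect that closing the range $n\ge p-1$ will require genuinely new input beyond the reductions above.
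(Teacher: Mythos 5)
The statement you were asked to prove is labeled \emph{Conjecture}~\ref{41} in the paper, and the paper does not prove it: the authors explicitly say that for small primes they ``could not prove Conjecture~\ref{41},'' and instead they (a) prove in Theorem~\ref{mainth}(i) that for $61<p\ne 79$ the mod-$p$ hypothesis of Conjecture~\ref{41} never holds for a nonsingular $M$ (so the conjecture becomes vacuous there), (b) prove Theorem~C that Conjecture~\ref{41} implies the Alon--Jaeger--Tarsi conjecture for every $p>3$, and (c) give in Proposition~\ref{symmetric} an equivalent reformulation in terms of elementary symmetric polynomials of the $1-g^{e_i}$ and $1-g^{a_i}$. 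So there is no ``paper's own proof'' to compare against, and you have correctly recognized that the statement is genuinely open.

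Your partial analysis is sound and in places goes a bit beyond what the paper records explicitly in this direction. The translation of the mod-$p$ identity into $\prod_i(1-g^{a_i})=0$ in $S=\mathbb{F}_p[t]/(t_1^{p-1},\dots,t_n^{p-1})$ via the annihilator of $t_1\cdots t_n$ is correct, as is the character-theoretic identification of the $\mathbb{Z}$-identity with the failure of AJT (this is exactly the paper's (P'1)$\Leftrightarrow$(P'3) in Corollary~\ref{properties2}). Your valuation argument (all $\chi_c(P)$ have $(1-\zeta_p)$-valuation $0$ or $2n$, while $P\in p\mathbb{Z}[G]$ forces valuation $\ge p-1$) correctly settles the range $2n<p-1$, and your lowest-degree observation (the degree-$n$ part $(-1)^n\prod_i\langle a_i,t\rangle$ is a nonzero form that survives in $S$ when $n\le p-2$) is a clean strengthening covering $n\le p-2$; both give correct partial cases. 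Your remarks relating the contrapositive to Kahn's Permanent conjecture are also accurate: a surviving monomial with exponents $\le 2$ certifies $\prod_i(1-g^{a_i})\ne 0$ in $S$, so the Permanent conjecture implies the mod-$p$ identity fails and in particular implies Conjecture~\ref{41} vacuously. Where the paper's perspective differs most is Proposition~\ref{symmetric}: they trade the single mod-$p$ product identity for the $p$ symmetric-function identities $\sigma_{2n-i}((1-g^{e_k}),(1-g^{a_\ell}))=0$ for $0\le i\le p-1$, which is a different and potentially more exploitable reformulation than your localization picture; you may find it useful for the range $n\ge p-1$ that you rightly identify as the genuine obstacle. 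As a proof of the conjecture the attempt is of course incomplete, but it is an honest and correct account of what is and is not known, and there is no comparison to draw with the paper since the paper leaves the statement open as well.
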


For the primes $p$ satisfying $61 < p \neq 79$ we will prove that even the seemingly weaker mod $p$ group ring identity cannot hold for any nonsingular matrix $M$.
On the other hand, if $p$ is {\it small} we could not prove Conjecture~\ref{41}, but we will show in Section~\ref{sec-small-primes} that the Alon-Jaeger-Tarsi conjecture follows from this conjecture in the case of small primes, as well.

\begin{theoremC}
For every prime $p>3$ Conjecture~\ref{41} implies the Alon-Jaeger-Tarsi conjecture.
\end{theoremC}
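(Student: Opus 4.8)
The plan is to convert the Alon--Jaeger--Tarsi property of a fixed nonsingular matrix $M$ with rows $a_1,\dots,a_n$ into the non-vanishing of the group ring element
$$z_M \;:=\; \prod_{1\le i\le n}\bigl(1-g^{e_i}\bigr)\cdot\prod_{1\le i\le n}\bigl(1-g^{a_i}\bigr),$$
and then to feed Conjecture~\ref{41} into this conversion. The first step is a character computation in $\mathbb{C}[G]$ for $G=\mathbb{F}_p^n$: the characters of $G$ are $g^v\mapsto\omega^{\langle y,v\rangle}$ for $y\in\mathbb{F}_p^n$ and a fixed primitive $p$-th root of unity $\omega$, and evaluating the $y$-character on $z_M$ yields
$$\prod_{1\le i\le n}\bigl(1-\omega^{y_i}\bigr)\cdot\prod_{1\le i\le n}\bigl(1-\omega^{(My)_i}\bigr),$$
which is nonzero exactly when both $y$ and $My$ have no zero coordinate. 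Since $\mathbb{C}[G]$ is semisimple, $z_M=0$ in $\mathbb{C}[G]$ if and only if every character kills it, which by the above happens if and only if $M$ has no Alon--Jaeger--Tarsi vector; and as $\mathbb{Z}[G]$ embeds into $\mathbb{C}[G]$, the same holds with $\mathbb{C}[G]$ replaced by $\mathbb{Z}[G]$. Thus the Alon--Jaeger--Tarsi conjecture over $\mathbb{F}_p$ is exactly the assertion that $z_M\neq 0$ in $\mathbb{Z}[G]$ for every nonsingular $M$.

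For the primes $61<p\neq 79$ nothing more is needed: by the result that for such primes the mod-$p$ identity fails for every nonsingular $M$, one has $z_M\neq 0$ already in $\mathbb{F}_p[G]$, hence a fortiori in $\mathbb{Z}[G]$, so the conclusion holds outright and Conjecture~\ref{41} trivially implies it. The substance of Theorem~C thus lies in the small primes $p>3$. Here I would combine Conjecture~\ref{41} with the trivial implication $z_M=0\in\mathbb{Z}[G]\Rightarrow z_M=0\in\mathbb{F}_p[G]$: together they make the two vanishing conditions equivalent, so the target becomes $z_M\neq 0$ in $\mathbb{F}_p[G]$ for every nonsingular $M$. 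Via the isomorphism $\mathbb{F}_p[\mathbb{F}_p^n]\cong\mathbb{F}_p[s_1,\dots,s_n]/(s_1^p,\dots,s_n^p)$ with $g^{e_i}\leftrightarrow 1+s_i$, this means the polynomial $\prod_i\bigl(1-\prod_j(1+s_j)^{M_{ij}}\bigr)$ must have some monomial, all of whose exponents are $\le p-2$, with coefficient $\not\equiv 0\pmod p$. Its lowest-degree part is $(-1)^n\prod_i(Ms)_i$, whose coefficient at $s_1\cdots s_n$ is $\operatorname{per} M$, so a nonzero permanent already settles the case, recovering the classical fact that a nonzero permanent suffices (the extreme case of Kahn's Permanent conjecture).

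The crux, and the step I expect to be the real obstacle, is matrices with vanishing permanent and small $p$, where the restriction ``all exponents $\le p-2$'' genuinely bites and Conjecture~\ref{41} on its own is not enough — it remains consistent with a single nonsingular $M$ for which $z_M$ vanishes both modulo $p$ and over $\mathbb{Z}$. My attempt would be indirect: supposing $M$ is nonsingular over $\mathbb{F}_p$ with $z_M=0$ in $\mathbb{F}_p[G]$ — equivalently, by Conjecture~\ref{41}, with $z_M=0$ in $\mathbb{Z}[G]$, i.e.\ with no Alon--Jaeger--Tarsi vector — I would build from $M$ an auxiliary nonsingular matrix $M'$ over $\mathbb{F}_p$ of larger size whose group ring element still vanishes modulo $p$, inheriting the total mod-$p$ cancellation present in $z_M$, but for which one can write down an Alon--Jaeger--Tarsi vector explicitly, so that $z_{M'}\neq 0$ in $\mathbb{Z}[G']$. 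Any such $M'$ contradicts Conjecture~\ref{41}. The delicate part is the construction of $M'$: direct sums and block-triangular extensions keep $z_M$ as a tensor factor of $z_{M'}$ and so are useless, so one needs a genuinely ``mixing'' enlargement that preserves vanishing modulo $p$ while destroying it over $\mathbb{Z}$, keeps $M'$ invertible, and admits a hand-built witness — and arranging all three simultaneously is where I expect the main work to go.
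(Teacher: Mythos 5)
Your setup is entirely correct: you correctly identify that $z_M=0\in\mathbb{Z}[G]$ is equivalent to $M$ having no Alon--Jaeger--Tarsi vector (via the character/Fourier computation, which is (P'1)$\leftrightarrow$(P'3) in the paper), and that under Conjecture~\ref{41} the $\mathbb{Z}$ and mod-$p$ vanishings coincide. You also correctly frame the task as a reductio: from a hypothetical counterexample $M$, manufacture an auxiliary nonsingular $M'$ whose group-ring product vanishes mod $p$ but not over $\mathbb{Z}$, contradicting Conjecture~\ref{41}. This is exactly the philosophy the paper uses. The issue is that you stop at the point where the proof actually begins: you write that constructing $M'$ ``is where I expect the main work to go'' and then do not construct it. So the proposal is a correct strategy statement, not a proof, and the gap you flag is precisely the content of Theorem C.

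For orientation, here is how the paper fills it, and why a ``one-shot'' construction from $M$ alone seems not to be the route. The paper takes a \emph{minimal-dimension} AJT counterexample $M\in M^{n\times n}(\mathbb{F}_p)$ and runs an induction. The central technical lemma is that for every index $i$ the ``one less factor'' product $\prod_{j\ne i}(1-g^{e_j})\cdot\prod_{j}(1-g^{a_j})$ already vanishes in $\mathbb{Z}[G]$. To prove it, one projects to the subgroup $G'=\langle e_j : j\ne i\rangle$ and must upgrade a mod-$p$ identity to a $\mathbb{Z}$-identity over $G'$; but the projected vectors $a'_1,\dots,a'_n$ are $n$ vectors in an $(n-1)$-dimensional space, so Conjecture~\ref{41} does not apply directly. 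This is where the auxiliary construction you anticipated appears: assuming the $\mathbb{Z}$-identity fails, the paper builds two explicit bases $B_1,B_2$ of a much larger space $V'\cong\mathbb{F}_p^{2n^2-n}$, inflating $M$ by attaching to each of the $2n$ ``slots'' an $(n-1)$-dimensional gadget built from $a'_1,\dots,a'_n$. One checks that $\prod_{b\in B_1}(1-g^b)\prod_{b\in B_2}(1-g^b)=0$ in $\mathbb{F}_p[V']$ (inherited from $z_M=0$ and the gadgets), while a nonzero $\mathbb{Z}$-coefficient is exhibited by writing down an explicit vector $x\in V'$ whose pairing with every element of $B_1\cup B_2$ is nonzero. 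That contradicts Conjecture~\ref{41}, establishing the lemma after a short Fourier argument showing $q(1-g^{e_i})^2=0$ forces $q(1-g^{e_i})=0$. The rest is a derivation trick: applying $\partial_i(\sum x_vg^v)=\sum x_v v_i g^v$ to the mod-$p$ identity for $z_M$, using the lemma to kill the $\partial_i(1-g^{e_j})$ terms and nonsingularity of $M$ to isolate one row, then restricting to a coordinate hyperplane yields a mod-$p$ identity for an $(n-1)\times(n-1)$ nonsingular submatrix; Conjecture~\ref{41} upgrades that to the $\mathbb{Z}$-identity, contradicting minimality of $n$. Both the gadget construction and the derivation-plus-induction wrapper are needed; neither is present in your proposal.
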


The organization of the paper is as follows. In Section~\ref{sec-not}  some necessary notations are introduced. 
In Section~\ref{sec-outline}  we outline the strategy of the proof of Theorem A.  In Section~\ref{sec-prelim}  we prove some additive combinatorial lemmas about the minimal size of subsets in $\mathbb{F}_p$ containing arithmetic progressions around all elements. In Section~\ref{sec-group-ring} we investigate different properties of matrices $M \in M^{n \times n}(\mathbb{F}_p)$ which are {\it similar} to being a counterexample to the Alon-Jaeger-Tarsi conjecture and we prove different equivalent characterisations of them. 

Section~\ref{sec-proof}  contains the proofs of  Theorem A and Theorem B, and Section~\ref{sec-small-primes} contains the proof of Theorem C.
 Some open questions and conjectures are formulated in Section~\ref{sec-open}. 
Finally, we include an Appendix containing explicit constructions of subsets in $\mathbb{F}_p$ for different primes, when there are  constructions that are good enough for our purposes but our general additive combinatorial lemmas do not cover these cases.


\section{Notations}\label{sec-not}

Throughout the paper we use the notation $[n]:=\{1,2,\dots,n\}$ and we denote by $[a,b]:=\{k\ |\ a\leq k\leq b,\ k\in\mathbb{Z} \}$  the set of integers between $a$ and $b$ included.

The set of $n\times n$ matrices over a field $\mathbb{F}$ is denoted $M^{n\times n}(\mathbb{F})$. For a matrix $M\in M^{n\times n}(\mathbb{F})$ the $(n-1)\times (n-1)$ submatrix obtained by deleting the $i$-th row and the $j$-th column from $M$ is denoted $M_{ij}$.

The standard dot product of vectors $v,w$ from an $n$-dimensional vector space is denoted $\langle v, w \rangle := \sum\limits_{1 \leq i \leq n} v_i w_i$.

We use the notation $\mathbb{F}_p^*:=\mathbb{F}_p\setminus\{0\}$ for the set of nonzero elements of the prime field $\mathbb{F}_p$.

In the paper if we consider a group ring $R[G]$, where $R$ is an arbitrary ring and $G = \mathbb{F}_p^n$ for some $n \geq 1$ and $v \in \mathbb{F}_p^n$ is an arbitrary vector, then we use the multiplicative notation $g^v = \prod\limits_{1 \leq i \leq n} g_i^{v_i}$.


If we have a polynomial $f(x_1, \dots, x_n) \in \mathbb{F}_p[x_1, \dots, x_n]$, by its reduced form $$\overline{f}(x_1, \dots, x_n) \in \mathbb{F}_p[x_1, \dots, x_n]$$
we mean the polynomial which has degree at most $p-1$ in each variable and obtained by the identification of $x^{i(p-1) + j} \cong x^j$ if $i>0$ and $ p>j > 0$. Note that $f(a_1, \dots, a_n) = \overline{f}(a_1, \dots, a_n)$ for every $a_1, \dots, a_n \in \mathbb{F}_p$.

Let us recall the simple fact that for a polynomial $f(x_1, \dots, x_n) \in \mathbb{F}_p[x_1, \dots, x_n]$ the following are equivalent:
\begin{itemize}
    \item $f(a_1, \dots, a_n) = 0$ for every $a_1, \dots, a_n \in \mathbb{F}_p$,
    \item $\overline{f}(a_1, \dots, a_n) = 0$ for every $a_1, \dots, a_n \in \mathbb{F}_p$,
    \item $\overline{f} \equiv 0$.
\end{itemize}

For $f \in \mathbb{F}_p[x_1, \dots, x_n]$ and $a_1,\dots,a_n\geq 0$ let 
$\Coeff\left(\prod\limits_{i=1}^n x_i^{a_i},f\right)$
denote the coefficient of the monomial $\prod\limits_{i=1}^n x_i^{a_i}$ in $f$.

We will use the notation
$$\sigma_k(x_1,\dots,x_n)=\sum\limits_{1\leq i_1<\dots<i_k\leq n} x_{i_1}\dots x_{i_k}$$
for the elementary symmetric polynomials.



\section{Outline of the proof}\label{sec-outline}

In this section the strategy of the proof is described. For brevity it is explained in case of the proof of  Theorem A, that is, the Alon-Jaeger-Tarsi conjecture (for sufficiently large primes).

In Section~\ref{sec-group-ring} we will investigate different properties of matrices $M \in M^{n \times n}(\mathbb{F}_p)$ which are {\it similar} to being a counterexample to the Alon-Jaeger-Tarsi conjecture and we will prove different equivalent characterisations of them. Specially, it is going to be shown that if $M \in M^{n \times n}(\mathbb{F}_p)$ is a counterexample to the Alon-Jaeger-Tarsi conjecture, then we have
\begin{equation}\label{firstZgroup}
\prod\limits_{1 \leq i \leq n} (1 - g^{e_i}) \cdot \prod\limits_{1 \leq i \leq n} (1 - g^{a_i}) = 0 \in \mathbb{Z}[\mathbb{F}_p^n],  
\end{equation}
where $a_i\ (i\in[ n])$ denotes the $i$-th row vector of the matrix $M$ and $e_i\ (i \in [n])$ denotes the $i$-th standard basis vector.

We will also investigate the following more special identity in the mod $p$ group ring:
\begin{equation}\label{firstpgroup}
\prod\limits_{1 \leq i \leq n} (1 - g^{e_i}) \cdot \prod\limits_{1 \leq i \leq n} (1 - g^{a_i}) = 0 \in \mathbb{F}_p[\mathbb{F}_p^n],  
\end{equation}

It will turn out that both identity~ \eqref{firstZgroup} and identity~\eqref{firstpgroup} can be formulated in equivalent forms as the vanishing of different coefficients in some polynomial functions. Moreover, these vanishings are partially in some kind of duality for the two identities (see Lemma~\ref{duality}). We will prove that even the seemingly weaker identity~\eqref{firstpgroup} cannot hold for an invertible matrix $M \in M^{n \times n}(\mathbb{F}_p)$ if $p > 61,\ p \neq 79$.

We will use the philosophy of the duality described above in a combinatorial way.
We are using the falseness of a statement similar to DeVos' conjecture in the dual setup in an extremal case (which does not satisfy the conditions of DeVos' conjecture), when it is false tautologically by a simple counting argument.

Let us call a subset $A$ $S_1$-type if for every element $a \in A$ there exist two different elements $b, c \in A$ such that $2a = b+c$. We will construct $S_1$-type subsets $A_i,B_i \subseteq \mathbb{F}_p^*\ (i \in [n])$  such that if $y_i \in A_i\ (i \in [n])$ and $x_i \in B_i \ (i \in [n])$, then 
$$\sum_{1 \leq i \leq n} y_i e_i \neq \sum_{1 \leq i \leq n} x_i a_i.$$

This is indeed the falseness of a DeVos-type statement, since, in other words the hyperplanes in $\mathbb{F}_p^n$ given by 
$y_i \in \mathbb{F}_p \setminus A_i\ (i \in [n])$, $x_i \in \mathbb{F}_p \setminus B_i\  (i\in[ n])$ cover
$\mathbb{F}_p^n$.

This is the point where we will use the assumptions $p > 61,\ p \neq 79$ as we can construct the above-mentioned subsets $A_i,B_i$ only in these cases.

We will use the existence of the subsets $A_i,B_i \subseteq \mathbb{F}_p^*\ (i\in [n])$ in a tricky way to show that if the mod $p$ group ring identity~\eqref{firstpgroup} holds, then the mod $p$ group ring identity should also hold for a particular $(n-1) \times (n-1)$ nonsingular submatrix of the original matrix $M$. We will reach a contradiction by induction and this will finish the proof of Theorem A completely.

\section{Preliminary additive combinatorial lemmas}\label{sec-prelim}
In this section we prove some additive combinatorial lemmas that we are going to use in the proofs of our main theorems.

\subsection{Subsets of $\mathbb{F}_p$ with many arithmetic progressions}

For integers $k \geq 1$ and primes $p\geq 2k+1$ let us define the following quantities.

Let us call a subset $A \subseteq \mathbb{F}_p$ {\it $S_k$-type} if for every $a \in A$ there exists a residue $d \in \mathbb{F}_p^*$ such that $a + i \cdot d \in A$ for every $-k \leq i \leq k$. In other words, a subset $A$ is $S_k$-type if each element of $A$ is the middle point of a $(2k+1)$-term arithmetic progression contained in $A$. Let $S_k(p) \geq 2k + 1$ denote the size of the smallest $S_k$-type subset $A \subseteq \mathbb{F}_p$.


\bigskip

Let us call a subset $A \subseteq \mathbb{F}_p$ {\it $N_k$-type} if the following two conditions hold:
\begin{itemize}
    \item For every $a \in A$ there exists a residue $d \in \mathbb{F}_p^*$ such that $a + i \cdot d \in A$ for every $-k \leq i \leq k$.
    \item For every $a \notin A$ there exists a residue $d \in \mathbb{F}_p^*$ such that $a + i \cdot d \in A$ for every $1 \leq i \leq k$.
\end{itemize}
That is, a subset $A$ is $N_k$-type, if $A$ is $S_k$-type and for every $b\notin A$ the set $A\cup \{b\}$ contains a $(k+1)$-term arithmetic progression with starting element $b$. 
Let $N_k(p) \geq 2k + 1$ denote the size of the smallest $N_k$-type subset $A \subseteq \mathbb{F}_p$.

\smallskip

\noindent
Note that  $S_1(p) = N_1(p)$ trivially holds and in general we have $S_k(p) \leq N_k(p)$.

\noindent
The following lemmas about the quantities $S_k(p), N_k(p)$ will be applied in the proofs.

\begin{lemma}\label{SN1}
If $p \geq 3$, then there exists a subset 
$A \subseteq \mathbb{F}_p$ such that 
$$|A| \leq 2\lfloor \log_2 p\rfloor$$
and for every $a \in A$ there exist $b, c \in A$ such that $2a = b + c$ and $b \neq a$, that is, $A$ is $S_1$-type. In other words 
$$S_1(p) = N_1(p) \leq 2\lfloor \log_2 p\rfloor.$$
\end{lemma}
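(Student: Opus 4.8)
The plan is to construct an $S_1$-type set explicitly as a union of geometric-type progressions, exploiting the fact that the condition "$2a = b+c$ with $b \neq a$" only requires, for each $a \in A$, that some three-term arithmetic progression centered at $a$ lies in $A$. The key observation is that if $a, b \in A$ then $a$ is the midpoint of the progression $2a-b,\ a,\ b$, so it suffices to arrange that for every $a \in A$ there is a $b \in A$ with $b \neq a$ and $2a - b \in A$ as well. A natural way to get a self-similar structure is to take $A$ to be (roughly) a set closed under the maps $x \mapsto 2x$ and $x \mapsto x/2$ together with a small "core" that handles the boundary: concretely, I would try $A = \{\pm 2^i : 0 \le i \le \lfloor \log_2 p\rfloor\} \cup \{0\}$ or a similar dyadic set, and check the centered-progression condition element by element.

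First I would verify the condition for a single dyadic element $a = 2^i$: take $b = 2^{i+1}$ (so $2a - b = 0 \in A$) when $i < \lfloor \log_2 p\rfloor$, and $b = 2^{i-1}$, $2a-b = 3\cdot 2^{i-1}$ when we are at the top — but $3 \cdot 2^{i-1}$ need not be a power of $2$, so the naive dyadic set does not quite close up. This is the main obstacle: making the set closed under "reflect through any of its points" while keeping the size logarithmic. I expect the fix is to build $A$ as a disjoint union of two geometric progressions with ratio $2$, arranged symmetrically (for instance $\{2^i\}_i \cup \{-2^i\}_i \cup\{0\}$, or $\{2^i\}_i \cup \{c\cdot 2^i\}_i$ for a cleverly chosen $c$), so that the "reflection" of one progression lands inside the other plus $\{0\}$. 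For $a = 2^i$ with $i$ small one uses $b = 0$, giving center-reflection $2a - 0 = 2^{i+1}$, which is again in $A$; for $a$ near the top of a progression one reflects through a nearby point of the other progression. After choosing the right two progressions, checking the condition becomes a short finite case analysis on the position of $a$ within its progression.

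The size bound is then immediate: each geometric progression with ratio $2$ inside $\mathbb{F}_p$ has at most $\lfloor \log_2 p\rfloor + 1$ distinct elements before it would exceed $p$, and taking two such progressions (and absorbing $0$ and any small overlap) gives $|A| \le 2\lfloor \log_2 p\rfloor$ after a careful count; the small primes $p = 3, 4, \dots$ may need to be checked by hand to make sure the constant and the floor work out. Finally, since $N_1(p) = S_1(p)$ is noted to hold trivially (the second defining condition of $N_1$-type, a $2$-term progression $b, b+d$ starting at $b \notin A$, is automatically satisfiable as long as $|A| \ge 1$ and $p \ge 3$, or more carefully whenever $A$ meets a translate of itself), the bound on $S_1(p)$ transfers directly to $N_1(p)$. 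The delicate point throughout is the exact constant $2\lfloor\log_2 p\rfloor$ rather than something like $2\log_2 p + O(1)$, so I would be careful to choose the two progressions so that their combined size, after removing overlaps, is at most $2\lfloor \log_2 p\rfloor$ and not merely $O(\log p)$.
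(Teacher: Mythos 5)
The proposal takes a genuinely different route — dyadic (doubling) progressions built bottom-up from $1$ — but it is not carried out, and the gap it leaves is exactly the crux of the problem. You yourself identify the obstacle: for $a=2^m$ at the top of the progression, $2a = 2^{m+1}$ has no natural representation as $b+c$ with $b,c\in A$, $b\neq a$, and the proposed fix ("two geometric progressions with a cleverly chosen ratio/shift, arranged so the reflections land back in $A$") is left as an expectation rather than verified. In $\mathbb{F}_p$ with $2^m< p< 2^{m+1}$ one has $2^{m+1}\equiv 2^{m+1}-p$, which is some arbitrary residue in $(0,2^m)$ with no reason to lie in $\{0\}\cup\{\pm 2^i\}$, so the naive symmetric set does not close up and I do not see how a mere choice of a second ratio $c$ fixes this in general. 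The size bound is also unresolved: $\{0\}\cup\{\pm 2^i:0\le i\le m\}$ already has $2m+3 > 2\lfloor\log_2 p\rfloor$ elements before any repair, and you explicitly defer the careful count.

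The paper avoids both problems by going in the opposite direction. It starts from a small core $\{-1,0,1,(p-1)/2\}$ (the element $(p-1)/2$ is handled immediately, since $2\cdot\frac{p-1}{2}=p-1=(-1)+0$) and then builds a \emph{halving} chain $k,\lfloor k/2\rfloor,\lfloor k/4\rfloor,\dots,1$ where $k\approx p/4$, together with its negatives. Every new element is of the form $t/2$, $(t-1)/2$, or $(t+1)/2$ for some $t$ already in the set, so the representation $2a\in\{t,t-1,t+1\}=t+\{-1,0,1\}$ is automatic against the core; there is no "top" boundary to close up because the chain terminates at $\pm 1$, which are in the core. The chain has length $\lfloor\log_2 k\rfloor+1$, and the bookkeeping (with $k=(p\mp1)/4$ in the two residue classes mod $4$) gives exactly $2\lfloor\log_2 p\rfloor$. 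This is not a cosmetic difference: the halving direction is what makes the $S_1$-condition free at every step, whereas the doubling direction forces you to confront the wrap-around at $2^{m+1}\bmod p$, which is where your sketch stops. To turn your idea into a proof you would need an explicit, verified closure for the top elements and a count tight to the stated constant; as written, these are missing.
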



\begin{proof}
We will construct a set $A$ of size $2\lfloor \log_2 p\rfloor$ which is $S_1$-type. Let $\{-1,0,1,(p-1)/2\}\subseteq A$, then
$$0=1+(-1),\quad p-1=(-1)+0$$
show that for $a=0$ and $a=(p-1)/2$ it is possible to find $b,c\in A$ in such a way that $2a=b+c$. We will add further elements to the set in such a way that the condition will also hold for $a=\pm 1$ and for the new elements, as well. Each new element will be either $t/2$ or $(t-1)/2$ or $(t+1)/2$, where $t$ is an element already contained in $A$. The representations 
$$2\cdot \frac{t}{2}=t+0,\quad 2\cdot\frac{t-1}{2}=t+(-1),\quad 2\cdot\frac{t+1}{2}=t+1$$
show that the condition will hold for the newly added elements. This also implies that if 1 and $-1$ can also be expressed in the form $t/2$ or $(t-1)/2$ or $(t+1)/2$ with some $t\in A \setminus \{-1, 1\}$, then we obtained an $S_1$-type set.

Let us distinguish the cases $p\equiv 1\pmod {4}$ and $p\equiv 3\pmod {4}$. In the former case let $p=4k+1$, then $(p-1)/2=2k$, and we may add $(2k)/2=k$ and $(2k+1)/2\equiv 3k+1\equiv -k\pmod{p}$ to the set $A.$ Now, our aim is to reach 1 from $k$: we add $\lfloor k/2\rfloor$ to $A$ (which is  $k/2$ if $k$ is even and  $(k-1)/2$ if $k$ is odd). Then we repeat the process with the newly added element. This way in $\lfloor \log_2 k\rfloor$ steps we will reach 1 from $k$:
$$\underbrace{k, \lfloor k/2\rfloor ,\lfloor k/4\rfloor,\dots,1.}_{\lfloor \log_2 k\rfloor+1\text{ elements}}$$
Similarly, we can reach $-1$ with the sequence
$$\underbrace{-k, -\lfloor k/2\rfloor ,-\lfloor k/4\rfloor,\dots,-1.}_{\lfloor \log_2 k\rfloor+1\text{ elements}}$$
Thus we obtain an $S_1$-type set of size $2(\lfloor \log_2 k\rfloor+1)+2=2\lfloor \log_2 p\rfloor$, since $k=(p-1)/4$.

The case $p\equiv 3\pmod{4}$ can be handled similarly. Let $p=4k+3$, then $(p-1)/2=2k+1$, so we may add $((2k+1)+1)/2=k+1$ and $(2k+1)/2\equiv 3k+2\equiv -(k+1)\pmod{p}$ to the set $A$. Then, by following the previous process we get an $S_1$-type set of size  
$2(\lfloor \log_2 (k+1)\rfloor+1)+2=2\lfloor \log_2 p\rfloor$, since $k+1=(p+1)/4$.

\end{proof}

\begin{corollary}\label{cor-s1}
If $61<p\ne 79$ is a prime, then $S_1(p)^2<p-1$.
\end{corollary}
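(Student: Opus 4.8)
The plan is to combine Lemma \ref{SN1}, which gives $S_1(p) = N_1(p) \le 2\lfloor \log_2 p\rfloor$, with an elementary analytic estimate. First I would reduce the claim to showing that $\bigl(2\lfloor \log_2 p\rfloor\bigr)^2 < p - 1$ for all primes $p$ with $61 < p \ne 79$; since $S_1(p) \le 2\lfloor \log_2 p\rfloor$ and squaring is monotone on nonnegative reals, this suffices. Dropping the floor only helps the lower bound on $p-1$ side, so it is even enough to prove $4(\log_2 p)^2 < p - 1$, i.e. that the function $f(x) = x - 1 - 4(\log_2 x)^2$ is positive for $x \ge p_0$ for a suitable threshold $p_0$.

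The main step is a standard calculus argument: $f$ is eventually increasing (its derivative $1 - \tfrac{8\log_2 x}{x \ln 2}$ is positive once $x$ is moderately large, as $\log_2 x = o(x)$), and one checks $f(x) > 0$ at one explicit point past that threshold. A convenient choice is something like $x = 128 = 2^7$: there $4(\log_2 128)^2 = 4\cdot 49 = 196 < 127 = 128 - 1$ fails, so $128$ is not yet large enough — which already signals that the inequality $S_1(p)^2 < p-1$ genuinely does not hold for all $p$, only for $p$ past a certain point, and that the small exceptions must be handled separately. So instead I would locate the true crossover: for $p$ in the range $2^k \le p < 2^{k+1}$ one has $\lfloor \log_2 p\rfloor = k$, hence $S_1(p)^2 \le 4k^2$, and the inequality $4k^2 < p - 1$ holds for \emph{all} $p \ge 2^k$ as soon as $4k^2 < 2^k - 1$, which is true for $k \ge 9$ (check $4\cdot 81 = 324 < 511$) and fails for $k = 8$ ($4\cdot 64 = 256 < 255$ is false by one). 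Thus for every prime $p \ge 2^9 = 512$ the bound is immediate.

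It then remains to verify the finitely many primes $p$ with $61 < p < 512$ and $p \ne 79$ by hand, splitting by the value of $k = \lfloor \log_2 p\rfloor \in \{6,7,8\}$: for $64 \le p \le 127$ we need $4\cdot 36 = 144 < p-1$, i.e. $p \ge 146$ — so this range must be checked via the actual (smaller) values of $S_1(p)$ or via explicit constructions, which is presumably exactly why the Appendix mentioned in the introduction exists; for $128 \le p \le 255$ we need $4\cdot 49 = 196 < p - 1$, i.e. $p \ge 198$, again leaving a gap; and for $256 \le p \le 511$ we need $4 \cdot 64 = 256 < p-1$, i.e. $p \ge 258$, leaving only $p = 257$. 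The honest statement is that the crude bound $S_1(p) \le 2\lfloor\log_2 p\rfloor$ alone does not close all cases in $(61, 512)$, so the real content of the corollary for the small primes is the sharper bounds on $S_1(p)$ coming from the explicit $S_1$-type sets constructed in the Appendix; the clean asymptotic argument above disposes of everything with $p \ge 512$, and I expect the main obstacle to be bookkeeping the handful of intermediate primes where $144$, $196$, or $256$ sits awkwardly close to $p-1$, which is where one must invoke those tailored constructions rather than the logarithmic bound.
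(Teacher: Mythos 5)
Your proposal is correct and matches the paper's proof: apply Lemma~\ref{SN1} to get $S_1(p)\le 2\lfloor\log_2 p\rfloor$, observe that $4(\lfloor\log_2 p\rfloor)^2<p-1$ holds for all primes $p\ge 199$ except $p=257$, and handle the remaining primes in $(61,199)\cup\{257\}\setminus\{79\}$ via the explicit $S_1$-type sets tabulated in the Appendix. Your careful bookkeeping of the crossover values $146$, $198$, $258$ correctly locates exactly the same threshold $p\ge 199$, $p\ne 257$ that the paper uses.
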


\begin{proof}
By Lemma~\ref{SN1} we obtain that $S_1(p)^2\leq  4(\lfloor \log_2 p\rfloor)^2<p-1$, if $199\leq  p \neq 257$.

If $61 < p < 199,\ p \neq 79$ or $p = 257$, then from the constructions described in the Appendix we get again that
$S_1(p)^2 < p-1$.

\end{proof}

\noindent
For the case $k>1$ we prove a weaker statement:

\begin{lemma}\label{Skepsilon}
Let $k \geq 2$ be an integer. Then there exists a universal constant $K \geq 32$ such that if $p> k^{K k}$ is a prime, then there exists a subset $A \subseteq \mathbb{F}_p$ such that $|A| \leq p^{\frac{1}{2k+2}}$ and $A$ is $S_k$-type. In other words, for $p>k^{Kk}$ we have 
$$S_k(p)\leq p^{\frac{1}{2k+2}}.$$
\end{lemma}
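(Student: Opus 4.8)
I need to construct, for $p > k^{Kk}$, an $S_k$-type subset $A \subseteq \mathbb{F}_p$ of size at most $p^{1/(2k+2)}$. Recall $A$ is $S_k$-type if every $a \in A$ is the center of a $(2k+1)$-term AP lying in $A$.

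Natural first idea: take $A$ to be a union of dilates of a "generalized arithmetic progression" structure, or better, to exploit multiplicative structure. Actually, the cleanest approach: let $A$ be (a subset of) the set of values $\{ \sum_{j} \varepsilon_j \cdot 2^{j} : \varepsilon_j \in \{-1,0,1\} \}$ type sets won't directly give the $(2k+1)$-AP condition for large $k$. Let me think in the "multiplicative dilation" direction instead.

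Here is the approach I'd pursue. Fix a small set $D$ of "directions" (common differences) and a small set $C$ of "bases", and let $A = \{ c + i d : c \in C, d \in D, |i| \le k \}$ be the union over $C$ of $(2k+1)$-APs with all differences in $D$. Then every element of the form $c + i d$ with $|i| \le k - ?$ ... no, that only centers elements close to the $c$'s. To make *every* element of $A$ a center, I want the set $A$ to be closed-ish: a good candidate is a **geometric-progression-like** construction. Take $A = \{ \pm(g^j + i) : 0 \le j < m,\ |i| \le k \} \cup (\text{small correction set})$ where... hmm. Actually the honest route, matching the $p^{1/(2k+2)}$ exponent, is: pick a multiplicative structure. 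Consider $A = H \cdot B$ where $B = \{-k,\dots,k\}$ (so $|B| = 2k+1$) and $H$ is a multiplicative subgroup (or union of cosets) with the property that $H$ is "dilation-rich": for every $h \in H$, $h \cdot t \in H$ for enough $t$. If $h \in H$ and $b \in B$, then $hb$ wants common difference $h$: indeed $hb + i h = h(b+i) \in HB = A$ as long as $b + i \in B$, i.e. $|b+i| \le k$ — but that fails when $b = \pm k$. So I need $B$ itself to be $S_k$-type in $\mathbb{Z}$, which is impossible for a finite interval.

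The fix: replace $B = [-k,k]$ by a set that is $S_k$-type inside a slightly larger ambient arithmetic progression, using the logarithmic trick from Lemma \ref{SN1} iterated, or work modulo a smaller prime $q$. Concretely: pick an auxiliary prime $q$ with $k^{O(k)} < q$, find an $S_k$-type set $A_q \subseteq \mathbb{F}_q$ of controlled size by a $k=1$-style halving argument generalized (each new element $t$ forces adding roughly $t/2$ and the $2k$ shifts $t/2 - k, \dots, t/2 + k$ near it, so sizes multiply by $O(k)$ per halving step and there are $O(\log q)$ steps, giving $|A_q| = q^{o(1)}$ once $q$ is large compared to $k$, in fact $|A_q| \le k^{O(\log q)}$ which is $\le q^{\varepsilon}$ when $\log q \gg (\log k)/\varepsilon$). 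Then lift via a Freiman-type embedding / direct product: if $q^r \le p < $ something, embed a Cartesian power structure $A_q^{\oplus r}$ linearly into $\mathbb{F}_p$ using a Sidon-like map $(x_1,\dots,x_r) \mapsto \sum x_j Q^{j}$ with $Q$ a bit larger than $q$ and $Q^r < p$, so that the AP structure in each coordinate survives and the total set has size $|A_q|^r \le (q^\varepsilon)^r = (Q^r)^{\varepsilon'} \le p^{\varepsilon'}$. Choosing $q,r$ so that $q^r \asymp p$ and $\varepsilon' = 1/(2k+2)$ forces $q = p^{1/r}$ and the constraint $\log q \gg k \log k / \varepsilon'$ becomes $\frac{\log p}{r} \gg k^2 \log k$ — feasible with $r \asymp \log p / (k^2 \log k)$ provided $p > k^{Kk}$ for suitable absolute $K$, which is exactly the hypothesis.

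**Main obstacle.** The delicate point is making the center condition survive the lift: an element $a = \sum_j x_j Q^j$ with $x_j \in A_q$ must be the center of a genuine $(2k+1)$-AP *inside $A$*, meaning I get to use a single common difference $d \in \mathbb{F}_p^*$ that works simultaneously in all $r$ coordinates. Since $A_q$ is only $S_k$-type (each element has its own good difference $d_j$), I cannot in general find one $d$ serving all coordinates — this is the real crux. The resolution is to not take a product set but rather to keep the construction "one-dimensional": use the halving/iteration argument from Lemma \ref{SN1} directly modulo $p$ but be careful that $p$ is huge, so each halving step only adds $O(k)$ elements and after $O(\log_2 p)$ steps the total is $(O(k))^{O(\log p)} = p^{O((\log k)/1)}$... which is $\le p^{1/(2k+2)}$ exactly when $\log p \ge (2k+2)\cdot O(\log k) \cdot (\log p)/\log p$, i.e. I need the per-step blow-up controlled: redo the count as $|A| \le 2(2k+1)^{\lceil \log_2 p \rceil}$ roughly, take logs: $\log|A| \le \log 2 + \lceil\log_2 p\rceil \cdot \log(2k+1)$, and I want this $\le \frac{1}{2k+2}\log p$ — that is FALSE (it's too big by a factor $k$). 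So the naive iteration is insufficient and the product-set lift genuinely is needed, which means I must solve the simultaneous-difference problem. I would handle it by choosing the base set $A_q$ with the *stronger* property that it admits a single universal difference, e.g. build $A_q$ as a union of cosets of a subgroup $\langle \theta \rangle$ together with an interval so that $\theta$ (or a fixed small element) serves as the common difference for every element at once — i.e. demand $A_q + i \subseteq A_q$ for $|i| \le k$ is too strong, but $\{a, a+\theta^{?}\ldots\}$ with geometric spacing can be made to work. This strengthening, and verifying the resulting size bound $|A_q| \le q^{\varepsilon}$ still holds, is where the bulk of the technical work (and the constant $K \ge 32$) comes from; everything else — the linear lift $\mathbb{F}_q^r \hookrightarrow \mathbb{F}_p$ and the arithmetic optimizing $q$, $r$, $\varepsilon$ against the hypothesis $p > k^{Kk}$ — is routine bookkeeping.
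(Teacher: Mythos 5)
You correctly identify the central difficulty---that in a product-set lift $(x_1,\dots,x_r)\mapsto\sum_j x_j Q^j$ the center condition in $\mathbb{F}_p$ demands a \emph{single} common difference $d$ acting consistently in all $r$ coordinates, which the per-element differences guaranteed by the $S_k$ property of a base set $A_q$ do not provide---but you then defer the resolution, and the fix you gesture at is fragile. A literal universal difference (one $d$ centering every $a\in A_q$) forces $A_q + d \subseteq A_q$, hence $A_q + d = A_q$, hence (since $q$ is prime) $A_q\in\{\emptyset,\mathbb{F}_q\}$; you notice this yourself and fall back on ``geometric spacing,'' but geometric spacing inside the base set does not obviously produce one difference that serves all $r$ coordinates simultaneously after the lift, which is exactly the point at issue. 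As written, the proposal is a plan with an acknowledged, unfilled hole at its crux, and the sketch gives no concrete reason to believe the hole can be filled by this route.

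The paper does not pass through an auxiliary field or a Freiman-type embedding at all. Setting $x=\lceil p^{1/(4k+4)}\rceil$, it builds a multi-scale chain directly inside $\mathbb{F}_p$: sets $A_1,\dots,A_{4k+4}$, where $A_i$ is a union of long arithmetic progressions (about $2kx$ terms) with common difference $x^{i-1}$, anchored at small ``boundary'' sets $B_{i-1},C_{i-1}$ of points left unserved at scale $i-1$. Interior points of these long progressions are automatically centers of $(2k+1)$-term APs with difference $x^{i-1}$; only the $O(k)$ endpoints per anchor propagate to scale $i$, where the common difference jumps to $x^{i}$, and after $4k+4$ scales the boundary closes up. This is a disciplined version of precisely the iteration you dismissed: your estimate that a $\log_2 p$-depth halving argument blows up as $(2k+1)^{\log p}$ is correct, and the paper's missing idea is to replace bisection by geometric scaling with ratio $\approx p^{1/(4k+4)}$, which cuts the depth from $\log_2 p$ to $4k+4$ and confines the multiplicative blowup to the boundary sets, so that it can be absorbed by the hypothesis $p>k^{Kk}$.
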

\begin{proof}

We will give an explicit construction. Let $x: = \lceil p^{\frac{1}{4k + 4}} \rceil $,
that is, we have $p \leq x^{4k +4} \leq p\cdot 2^{4k+4}\leq p \cdot 2^{6k}$.

For a residue $c \in \mathbb{F}_p$, a subset $C \subseteq \mathbb{F}_p$ and a residue $d \in \mathbb{F}_p^*$, let $i$ be the minimal positive integer such that $c + i \cdot d \in C$ and let us denote $c + i \cdot d$ by $f(c, C, d)$.

Similarly for a residue $c \in \mathbb{F}_p$, a subset $C \subseteq \mathbb{F}_p$ and a residue $d \in \mathbb{F}_p^*$, let $i$ be the maximal negative integer such that $c + i \cdot d \in C$ and let us denote $c + i \cdot d$ by $g(c, C, d)$.

We will define subsets $A_i, B_i, C_i \ (i \in[ 4k+3])$ and $A_{4k+4}$ in such a way that $A := \bigcup\limits_{1 \leq i \leq 4k + 4} A_i$ will satisfy the requirements.

Let 
$$A_1: = \{ kx, kx\},$$
$$B_1 := [-kx, -kx + k] \cup \{ f(j, A_1, x)\ |\ kx - k \leq j \leq kx \}$$
and
$$C_1 = [kx-k , kx] \cup \{ g(j, A_1, x)\ |\ -kx \leq j \leq -kx + k \}.$$

Next, let us define 
$$A_2 := \{a + j \cdot x\ |\ a \in C_1,\ 1 \leq j \leq 2kx\} \cup \{a - j \cdot x\ |\ a \in B_1,\ 1 \leq j \leq 2kx\},$$
\begin{multline*}
B_2 := \{a - j \cdot x\ |\ a \in B_1,\ 2kx - k \leq j \leq 2kx\} \cup \\
\cup\{f(a + j \cdot x, A_2, x^2)\ |\ a \in C_1,\ 2kx - k \leq j \leq 2kx\}    
\end{multline*}
and 
\begin{multline*}
    C_2: = \{a + j \cdot x\ |\ a \in C_1,\ 2kx - k \leq j \leq 2kx\} \cup \\
    \cup \{g(a - j \cdot x, A_2, x^2)\ |\ a \in B_1,\ 2kx - k \leq j \leq 2kx\}.
\end{multline*}

\noindent
The sets $A_i,B_i,C_i$ are defined iteratively in a similar fashion. Namely, in general, for $2 \leq i \leq 4k + 3$ we define 
$$A_i := \{a + j \cdot x^{i-1}\ |\ a \in C_{i-1},\ 1 \leq j \leq 2kx) \cup \{a - j \cdot x^{i-1}\ |\ a \in B_{i-1}, 1 \leq j \leq 2kx\},$$
\begin{multline*}
    B_i := \{a - j \cdot x^{i-1}\ |\ a \in B_{i-1},\ 2kx - k \leq j \leq 2kx\} \cup \\
    \cup\{f(a + j \cdot x^{i-1}, A_i, x^i)\ |\ a \in C_{i-1},\ 2kx - k \leq j \leq 2kx\}
\end{multline*}
and 
\begin{multline*}
    C_i: = \{a + j \cdot x^{i-1}\ |\ a \in B_{i-1},\ 2kx - k \leq j \leq 2kx\} \cup \\
    \cup \{f(a + j \cdot x^{i-1}, A_i, x^i\ |\ a \in C_{i-1},\ 2kx - k \leq j \leq 2kx\}.
\end{multline*}

Finally, let us set 
$$A_{4k + 4} := \bigcup\limits_{a \in C_{4k + 3}}\{a, a + x^{4k + 3}, \dots, f(a, A_{4k + 3}, x^{4k + 3})\}.$$

One can see easily from the construction that there is a universal constant $K \geq 32$ such that if $p> k^{K k}$ is a prime large enough, then the subset $A$ is $S_k$-type and $|A| \leq p^{\frac{1}{2k+2}}$.
\end{proof}

\begin{lemma}\label{Nkepsilon}
Let $k \geq 2$ be an integer. If $p> k^{32k}$ is a prime, then $\mathbb{F}_p$ can be expressed as a disjoint union $A_1 \cup \dots \cup A_{\lceil p^{1/(2k+1)} \rceil}$ in such a way that for every $1 \leq j \leq \lceil p^{1/(2k+1)} \rceil$ the set $A_j$ is $N_k$-type.

In particular, for $p> k^{32 k}$ we have 
$$N_k(p) \leq \frac{p}{\lceil p^{1/(2k+1)} \rceil} \leq p^{2k/(2k+1)} .$$
\end{lemma}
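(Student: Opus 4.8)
The plan is to choose the partition at random and check the $N_k$‑property by a first‑moment/union‑bound argument; the philosophy is that a random subset of $\mathbb{F}_p$ of density $p^{-1/(2k+1)}$ is already $N_k$‑type with overwhelming probability once $p$ is super‑polynomially large in $k$. Concretely, put $q:=\lceil p^{1/(2k+1)}\rceil$, colour each element of $\mathbb{F}_p$ independently and uniformly with one of $q$ colours, and let $A_1,\dots,A_q$ be the colour classes (each of density $1/q$). I would show $\mathbb{P}(\text{some }A_j\text{ is empty or not }N_k\text{-type})<1$; any surviving colouring is then the desired disjoint union, and since the surviving classes are nonempty, the smallest of them is an $N_k$‑type set of size at most the average $p/q$, yielding simultaneously $N_k(p)\le p/\lceil p^{1/(2k+1)}\rceil\le p^{2k/(2k+1)}$.

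The core is a per‑element estimate with \emph{exponentially} small failure probability, to be summed over the $pq$ pairs (element $a$, class $A_j$). Fix $A_1$ and $a$. If $a\in A_1$, for $d\in\mathbb{F}_p^*$ set $T_d:=\{a+id:1\le|i|\le k\}$, a $2k$‑element set missing $a$, so $\mathbb{P}(T_d\subseteq A_1\mid a\in A_1)=q^{-2k}$; the key point is that one can pick $D\subseteq\mathbb{F}_p^*$ with $|D|\ge p/(16k^2)$ such that the $T_d$ ($d\in D$) are pairwise disjoint — indeed $T_d\cap T_{d'}\ne\emptyset$ forces $d'/d$ into the $\le 4k^2$‑element ratio set $\{i/i':1\le|i|,|i'|\le k\}$, so a greedy choice keeps a positive fraction of $\mathbb{F}_p^*$. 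Then the events $\{T_d\subseteq A_1\}_{d\in D}$ are independent, whence
\[
\mathbb{P}\bigl(a\text{ is not the centre of a }(2k+1)\text{-AP in }A_1\mid a\in A_1\bigr)\le(1-q^{-2k})^{|D|}\le e^{-|D|q^{-2k}}\le e^{-p^{1/(2k+1)}/(20k^2)},
\]
the last step using $q=\lceil p^{1/(2k+1)}\rceil=(1+o(1))p^{1/(2k+1)}$. The case $a\notin A_1$ is analogous and stronger: with $T_d=\{a+id:1\le i\le k\}$ one gets $\mathbb{P}(a\text{ is not the start of a }(k+1)\text{-AP into }A_1)\le e^{-\Omega(p^{1/2}/k^2)}$. (Alternatively, Janson's inequality applied to all of $\mathbb{F}_p^*$ removes the $k^2$ from the first exponent, but the crude independent‑subfamily bound already suffices.)

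Summing, $\mathbb{P}(\text{some }A_j\text{ not }N_k\text{-type})\le p\,e^{-p^{1/(2k+1)}/(20k^2)}+pq\,e^{-\Omega(p^{1/2}/k^2)}$, which is $<1$ provided $p^{1/(2k+1)}>Ck^2\log p$ for a suitable absolute constant $C$. An elementary check shows this holds for every prime $p>k^{32k}$: at the threshold $p^{1/(2k+1)}=k^{32k/(2k+1)}\ge k^{12.8}$ already dominates $Ck^2\cdot 32k\log k$ for all $k\ge 2$, and for larger $p$ the left side wins even more comfortably; moreover $\mathbb{P}(\text{some class empty})\le q(1-1/q)^p$ is doubly‑exponentially small, so one genuinely gets exactly $\lceil p^{1/(2k+1)}\rceil$ nonempty $N_k$‑type classes. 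This finishes the proof.

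The only real obstacle is obtaining \emph{exponential} rather than merely polynomial concentration for the number of admissible common differences: a bare second‑moment (Chebyshev) estimate gives failure probability only $O(p^{-1/(2k+1)})$, which a union bound over $p$ points destroys. Passing to an independent sub‑family of differences (or invoking Janson's inequality) is exactly what rescues this, and it is there — in checking that the pairwise‑overlap term is negligible against the first moment — that the super‑polynomial hypothesis $p>k^{32k}$ is consumed. One could instead mimic Lemma~\ref{Skepsilon} and write down an explicit self‑similar base‑$x$ construction of the $A_j$; but since the statement is purely existential the probabilistic route is much shorter, the extra work in the explicit version being to force the ``wrap‑around'' elements near $0$ in $\{0,\dots,p-1\}$ to carry the required progressions.
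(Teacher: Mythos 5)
Your proposal is correct and follows essentially the same route as the paper: colour $\mathbb{F}_p$ uniformly at random with $\lceil p^{1/(2k+1)}\rceil$ colours, extract a greedy family of differences $d$ whose associated progressions are pairwise disjoint (so the relevant events become independent), obtain an exponential failure bound per (element, class) pair, and conclude by a union bound over $\mathbb{F}_p\times\{1,\dots,q\}$ followed by the size estimate $\min_j|A_j|\le p/q$. The only cosmetic differences are your slightly weaker greedy constant, the explicit remark that no class is empty, and the mention of Janson's inequality as an optional refinement, none of which change the argument.
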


\begin{proof}

The sets $A_1, A_2, \dots , A_{\lceil p^{1/(2k+1)} \rceil}$ are constructed by a random construction.

Let the probability of $a \in A_i$ be $\frac{1}{\lceil p^{1/(2k+1)} \rceil}$ for every index $1\leq i\leq \lceil p^{1/(2k+1)} \rceil$ (independently for every element $a$). 
We will prove that the disjoint union $\mathbb{F}_p = A_1 \cup \dots \cup A_{\lceil p^{1/(2k+1)} \rceil}$ is fine with positive probability.

Observe that for an index $j\leq \lceil p^{1/(2k+1)} \rceil$ and an element $a \in A_j \subseteq \mathbb{F}_p$ the probability that there does not exist $d \in \mathbb{F}_p^*$ such that $a + i \cdot d \in A$ for every $-k \leq i \leq k$ is at most 
$$\left(1- \frac{1}{(\lceil p^{1/(2k+1)} \rceil)^{2k}} \right)^{(p-1)/(2k^2)}.$$

Indeed, residues $d_1, \ldots, d_{(p-1)/(2k^2)}$ can be chosen in such a way that the sets $\{a + i \cdot d_j:\  -k \leq i \leq k,\ i \neq 0\}$ are pairwise disjoint and then we can estimate the probability using independence. (The $d_j$ values can be chosen greedily: after choosing $d_j$ we can not chose any element of the form $\pm d_juv^{-1}$ with $u,v\in [k]$, so choosing one $d_j$ forbids at most $2k^2$ elements to be chosen.)

Similarly, if $a \notin A_j \subseteq \mathbb{F}_p$, the probability that there does not exist $d \in \mathbb{F}_p^*$ such that $a + i \cdot d \in A$ for every $ 1 \leq i \leq k$ is at most 
$$\left(1- \frac{1}{(\lceil p^{1/(2k+1)} \rceil)^{k}} \right)^{(p-1)/(k^2)}.$$

Therefore, the probability that the disjoint union $\mathbb{F}_p = A_1 \cup \dots \cup A_{\lceil p^{1/(2k+1)} \rceil}$ is not fine is less than 
$$p \cdot \lceil p^{1/(2k+1)} \rceil \cdot \left(1- \frac{1}{(\lceil p^{1/(2k+1)} \rceil)^{2k}} \right)^{(p-1)/(2k^2)},$$
which is less than $1$, if $p$ is large enough.

Indeed, by using the  estimation $1-x<e^{-x}$ we get that
$$p \cdot \lceil p^{1/(2k+1)} \rceil \cdot \left(1- \frac{1}{(\lceil p^{1/(2k+1)} \rceil)^{2k}} \right)^{(p-1)/(2k^2)}<p^2\cdot e^{- \frac{p-1}{2k^2 \cdot (\lceil p^{1/(2k+1)} \rceil)^{2k} }}<1,$$
if $\frac{p-1}{2k^2 \cdot (\lceil p^{1/(2k+1)} \rceil)^{2k} } > 2 \log p$, 
which  holds if $p > k^{32k}$.

The second statement holds trivially, since, if $A_1 \cup \dots \cup A_{\lceil p^{1/(2k+1)} \rceil} = \mathbb{F}_p$ for pairwise disjoint sets $A_j$, then there is an index $1 \leq j \leq \lceil p^{1/(2k+1)} \rceil$ such that $|A_j| \leq \frac{p}{\lceil p^{1/(2k+1)} \rceil} \leq p^{2k/(2k+1)}$.
\end{proof}

\smallskip

\noindent
The following folklore lemma will also be needed:

\begin{lemma}\label{randomdisjoint}
Let $n \geq 1$ be an integer and $p$ be a prime.
Let $\{e_1, \dots, e_n\}$ and $\{f_1, \dots, f_n\} $ be two bases in $\mathbb{F}_p^n$.
Assume that we have subsets $U_i,V_i \subseteq \mathbb{F}_p^*\  (i \in  [n])$ such that 
$$\prod_{1 \leq i \leq n} |U_i| \cdot \prod_{1 \leq i \leq n} |V_i| < (p-1)^n.$$
Then there exist residues $\lambda_i \in \mathbb{F}_p^*\ (i\in [n])$ such that if $x_i \in U_i,\ y_i\in V_i\ (i \in[ n])$, then 
$$\sum_{1 \leq i \leq n} x_i e_i \neq \sum_{1 \leq i \leq n} \lambda_i y_i f_i.$$
\end{lemma}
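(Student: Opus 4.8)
The plan is to choose the $\lambda_i$ uniformly at random from $\mathbb{F}_p^*$ and show that the expected number of "bad collisions" is less than one, so some choice works. Fix a tuple $(x_1,\dots,x_n) \in U_1 \times \dots \times U_n$ and a tuple $(y_1,\dots,y_n) \in V_1 \times \dots \times V_n$. Write both sides in the basis $\{e_1,\dots,e_n\}$: the left side is $\sum_i x_i e_i$ with $i$-th coordinate $x_i \ne 0$, and the right side $\sum_i \lambda_i y_i f_i$ has some coordinate vector, say $(c_1(\lambda),\dots,c_n(\lambda))$, which is a linear form in the $\lambda_j$'s. Since $\{f_1,\dots,f_n\}$ is a basis, the map $(\lambda_1,\dots,\lambda_n) \mapsto \sum_i \lambda_i y_i f_i$ is, after fixing the nonzero $y_i$, a linear bijection of $\mathbb{F}_p^n$ onto itself; hence as $(\lambda_1,\dots,\lambda_n)$ ranges over all of $\mathbb{F}_p^n$, the vector $\sum_i \lambda_i y_i f_i$ takes each value in $\mathbb{F}_p^n$ exactly once. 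In particular the event $\sum_i x_i e_i = \sum_i \lambda_i y_i f_i$ holds for exactly one choice of $(\lambda_1,\dots,\lambda_n) \in \mathbb{F}_p^n$.

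The one subtlety is that we need $\lambda_i \in \mathbb{F}_p^*$, not $\lambda_i \in \mathbb{F}_p$. So I would instead bound things directly by a counting argument. For a fixed pair of tuples $(x_i),(y_i)$ as above, the number of $(\lambda_1,\dots,\lambda_n) \in (\mathbb{F}_p^*)^n$ with $\sum_i x_i e_i = \sum_i \lambda_i y_i f_i$ is at most $1$ (it is either $0$ or $1$, by the bijection in the previous paragraph). Summing over all pairs of tuples, the number of "bad" $\lambda$-tuples — those for which there exists some choice $x_i \in U_i$, $y_i \in V_i$ with $\sum_i x_i e_i = \sum_i \lambda_i y_i f_i$ — is at most
$$\prod_{1 \leq i \leq n} |U_i| \cdot \prod_{1 \leq i \leq n} |V_i|,$$
which by hypothesis is strictly less than $(p-1)^n = |(\mathbb{F}_p^*)^n|$. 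Hence there is at least one $\lambda$-tuple $(\lambda_1,\dots,\lambda_n) \in (\mathbb{F}_p^*)^n$ that is not bad, and this is exactly a tuple with the desired property.

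The argument is essentially a union bound / pigeonhole, and the only step requiring care is the claim that for fixed nonzero $y_i$ the assignment $(\lambda_i) \mapsto \sum_i \lambda_i y_i f_i$ is injective on $\mathbb{F}_p^n$ — this follows because $\{y_1 f_1, \dots, y_n f_n\}$ is again a basis (scaling basis vectors by nonzero scalars), so the matrix expressing this map in the standard coordinates is invertible. I do not expect a genuine obstacle here; the main thing to get right is simply to phrase the counting over $(\mathbb{F}_p^*)^n$ rather than over $\mathbb{F}_p^n$, so that the strict inequality in the hypothesis is exactly what is needed.
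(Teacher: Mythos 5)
Your proof is correct and takes essentially the same route as the paper: a union bound over all pairs of tuples $(x_i) \in \prod U_i$ and $(y_i) \in \prod V_i$, using that for fixed nonzero $y_i$ the map $(\lambda_i) \mapsto \sum_i \lambda_i y_i f_i$ is a bijection, so each pair rules out at most one $\lambda$-tuple. The paper phrases the count probabilistically (the collision probability for a fixed pair is at most $1/(p-1)^n$), while you phrase it as a direct pigeonhole over $(\mathbb{F}_p^*)^n$ and make the injectivity step explicit where the paper leaves it implicit; the substance is identical.
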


\begin{proof}

Let us choose the multipliers $\lambda_i \in \mathbb{F}_p^*$  independently at random taking each residue in $\mathbb{F}_p^*$ with probability $\frac{1}{p-1}$.

For fixed elements $x_i \in U_i\ (i\in[ n])$ and $y_i \in V_i\ (i\in[ n])$ the probability that the equation 
$$\sum_{1 \leq i \leq n} x_i e_i = \sum_{1 \leq i \leq n} \lambda_i y_i f_i$$
holds is at most $\frac{1}{(p-1)^n}$.

Therefore, the probability that the multipliers $\lambda_i \in \mathbb{F}_p^*$ do not satisfy the requirement of the lemma is at most $$\frac{\prod\limits_{1 \leq i \leq n} |U_i| \cdot \prod\limits_{1 \leq i \leq n} |V_i|}{(p-1)^n} < 1,$$
which finishes the proof.

\end{proof}

\noindent
We immediately get the following corollary of Lemma~\ref{randomdisjoint}:

\begin{corollary}\label{randomdisjoint2}
Let $n, k \geq 1$ be integers  and $p$ be a prime such that $N_k(p) \cdot S_k(p) < p-1$.
Let $\{e_1, \dots, e_n\}$ and $\{f_1, \dots, f_n\} $ be two bases in $\mathbb{F}_p^n$. Then there exist 
subsets $A_i, B_i \subseteq \mathbb{F}_p^*\ ( i \in[ n])$  such that $A_i$ is $S_k$-type for every $i\in[ n]$, $B_i$ is $N_k$-type for every  $i\in [n]$ and if $x_i \in A_i,\ y_i \in B_i$ (for $1\leq i\leq n)$, then 
$$\sum_{1 \leq i \leq n} x_i e_i \neq \sum_{1 \leq i \leq n} y_i f_i.$$
\end{corollary}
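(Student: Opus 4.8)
The plan is to derive Corollary~\ref{randomdisjoint2} directly from Lemma~\ref{randomdisjoint} by choosing the subsets $A_i$ and $B_i$ to be extremal witnesses for the quantities $S_k(p)$ and $N_k(p)$, and then absorbing the multipliers $\lambda_i$ into a rescaling of the second basis. First I would invoke the definitions: by the definition of $S_k(p)$ there is an $S_k$-type set $A \subseteq \mathbb{F}_p$ with $|A| = S_k(p)$, and by the definition of $N_k(p)$ there is an $N_k$-type set $B \subseteq \mathbb{F}_p$ with $|B| = N_k(p)$. Note both sets are $S_k$-type (since $N_k$-type implies $S_k$-type), and being $S_k$-type with $p \geq 2k+1$ forces them to contain no zero issues — actually I should be slightly careful here: the sets $A, B$ need to be taken inside $\mathbb{F}_p^*$. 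If $0 \in A$, one can translate: a translate $A + t$ of an $S_k$-type set is again $S_k$-type of the same size, and since $|A| = S_k(p) < p$ (indeed $S_k(p) \leq p - 1$, e.g.\ by Lemma~\ref{SN1} or Lemma~\ref{Skepsilon} when relevant, and in any case $S_k(p) < p$ because $S_k(p) \cdot N_k(p) < p-1$ forces $S_k(p) < p-1$), we may choose $t$ so that $0 \notin A + t$; replace $A$ by this translate. The same works for $B$. So without loss of generality $A, B \subseteq \mathbb{F}_p^*$.

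Next I would set $A_i := A$ and $B_i := B$ for every $i \in [n]$, and apply Lemma~\ref{randomdisjoint} with $e_i$ the given first basis, $f_i$ the given second basis, $U_i := A$, $V_i := B$. The hypothesis of Lemma~\ref{randomdisjoint} requires
$$\prod_{1 \leq i \leq n} |A| \cdot \prod_{1 \leq i \leq n} |B| = S_k(p)^n \cdot N_k(p)^n = \bigl( S_k(p) \cdot N_k(p) \bigr)^n < (p-1)^n,$$
which holds precisely because of the assumed inequality $N_k(p) \cdot S_k(p) < p-1$. Lemma~\ref{randomdisjoint} then yields residues $\lambda_i \in \mathbb{F}_p^*$ such that $\sum_i x_i e_i \neq \sum_i \lambda_i y_i f_i$ whenever $x_i \in A$ and $y_i \in B$. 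Now I would define $f_i' := \lambda_i f_i$; since each $\lambda_i \neq 0$, the vectors $\{f_1', \dots, f_n'\}$ still form a basis of $\mathbb{F}_p^n$, and $\sum_i \lambda_i y_i f_i = \sum_i y_i f_i'$. Thus $\sum_i x_i e_i \neq \sum_i y_i f_i'$ for all $x_i \in A$, $y_i \in B$. Relabeling $f_i'$ as the second basis gives exactly the statement — except that the corollary is phrased with the two bases \emph{given}, not up to rescaling; so in fact I would reread the statement and observe it only asserts the \emph{existence} of the sets $A_i, B_i$ for the two given bases, so rescaling the basis is not allowed. In that case the cleaner route is to instead absorb the $\lambda_i$ into the sets: set $B_i := \lambda_i B$ for each $i$. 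Then $B_i$ is a scalar multiple of an $N_k$-type set, hence itself $N_k$-type (scaling by a nonzero constant preserves the $S_k$ and $N_k$ conditions, as it only rescales the common difference $d$), $B_i \subseteq \mathbb{F}_p^*$, and $|B_i| = N_k(p)$. For $x_i \in A_i = A$ and $y_i \in B_i = \lambda_i B$, writing $y_i = \lambda_i z_i$ with $z_i \in B$, the conclusion of Lemma~\ref{randomdisjoint} gives $\sum_i x_i e_i \neq \sum_i \lambda_i z_i f_i = \sum_i y_i f_i$, as desired.

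The only genuinely nontrivial points — and the ones I would spell out carefully rather than the trivial probabilistic input — are (i) verifying that translating/scaling preserves $S_k$-type and $N_k$-type and the cardinality, so that one may assume $A, B \subseteq \mathbb{F}_p^*$, and (ii) matching the product bound $(S_k(p) N_k(p))^n < (p-1)^n$ to the hypothesis of Lemma~\ref{randomdisjoint}. I do not anticipate a real obstacle here; the corollary is essentially a packaging of Lemma~\ref{randomdisjoint} with the extremal definitions of $S_k(p)$ and $N_k(p)$, and the multiplier trick of folding $\lambda_i$ into $B_i$. I would write the proof in three or four sentences.

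\begin{proof}
By definition of $S_k(p)$ there is an $S_k$-type set $A_0 \subseteq \mathbb{F}_p$ with $|A_0| = S_k(p)$, and by definition of $N_k(p)$ there is an $N_k$-type set $B_0 \subseteq \mathbb{F}_p$ with $|B_0| = N_k(p)$. Both conditions are invariant under translation and under multiplication by a nonzero scalar (these operations only rescale and shift the common differences $d$), and they preserve cardinality; since $S_k(p) \cdot N_k(p) < p-1$ forces $|A_0|, |B_0| < p$, we may translate $A_0$ and $B_0$ so that $0 \notin A_0$ and $0 \notin B_0$. Thus we may assume $A_0, B_0 \subseteq \mathbb{F}_p^*$, with $A_0$ being $S_k$-type, $B_0$ being $N_k$-type, $|A_0| = S_k(p)$ and $|B_0| = N_k(p)$.

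Apply Lemma~\ref{randomdisjoint} to the two bases $\{e_1, \dots, e_n\}$ and $\{f_1, \dots, f_n\}$ with $U_i := A_0$ and $V_i := B_0$ for each $i \in [n]$; the hypothesis is satisfied because
$$\prod_{1 \leq i \leq n} |U_i| \cdot \prod_{1 \leq i \leq n} |V_i| = \bigl(S_k(p) \cdot N_k(p)\bigr)^n < (p-1)^n.$$
We obtain residues $\lambda_i \in \mathbb{F}_p^*\ (i \in [n])$ such that $\sum_i x_i e_i \neq \sum_i \lambda_i y_i f_i$ whenever $x_i \in A_0$ and $y_i \in B_0$. Now set $A_i := A_0$ and $B_i := \lambda_i B_0$ for each $i \in [n]$. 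Each $A_i$ is $S_k$-type, each $B_i$ is $N_k$-type (being a nonzero scalar multiple of the $N_k$-type set $B_0$), and $A_i, B_i \subseteq \mathbb{F}_p^*$. If $x_i \in A_i$ and $y_i \in B_i$, write $y_i = \lambda_i z_i$ with $z_i \in B_0$; then
$$\sum_{1 \leq i \leq n} x_i e_i \neq \sum_{1 \leq i \leq n} \lambda_i z_i f_i = \sum_{1 \leq i \leq n} y_i f_i,$$
which is the desired conclusion.
\end{proof}
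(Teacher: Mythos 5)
Your proof is correct and follows essentially the same route as the paper's: pick extremal $S_k$-type and $N_k$-type witnesses, translate them into $\mathbb{F}_p^*$ (using shift/dilation invariance), apply Lemma~\ref{randomdisjoint} with $U_i = A_0$, $V_i = B_0$, and fold the resulting multipliers $\lambda_i$ into $B_i := \lambda_i B_0$. The only difference is that you spell out the translation step and the verification of the product bound more explicitly than the paper, which simply states "stable to shifting and dilation."
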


\begin{proof}

Since the $S_k$ and $N_k$ properties are stable to shifting and dilation, we may assume that $A \subseteq \mathbb{F}_p^*$ is $S_k$-type with $|A| = S_k(p)$ and $B \subseteq \mathbb{F}_p^*$ is $N_k$-type with $|B| = N_k(p)$.

Now, let us apply Lemma~\ref{randomdisjoint} for the choice $U_1=\dots=U_n=A$ and $V_1=\dots=V_n=B$. Then Lemma~\ref{randomdisjoint} implies that there exist residues $\lambda_i \in \mathbb{F}_p^*$ such that if $x_i \in U_i,\ y_i\in V_i$ (for $1 \leq i \leq n)$, then 
$$\sum_{1 \leq i \leq n} x_i e_i \neq \sum_{1 \leq i \leq n} \lambda_i y_i f_i.$$

The subsets $A_i = U_i, \ B_i = \lambda_i V_i \ (i\in [ n])$ satisfy the conditions of the corollary.
\end{proof}

\noindent
From the preceding lemmas we get the following corollary which we will use in the proof of our main theorems.

\begin{corollary}\label{goodsubsets}
Let $n, k \geq 1$ be integers, $p$ be a prime, and  let $\{e_1, \dots, e_n\}$ and  $\{f_1, \dots, f_n\}$ be  two bases in $\mathbb{F}_p^n$. Then the following statements hold:

\begin{itemize}
\item[(i)] If $k = 1 $ and $61 < p \neq 79$, then there exist subsets $A_i,B_i \subseteq \mathbb{F}_p^*\ (i\in [n])$  such that $A_i$ and $B_i$ are $S_1$-type (for every  $i\in [n]$) and if $x_i \in A_i,\ y_i\in B_i\ (i\in[ n])$, then 
$$\sum_{1 \leq i \leq n} x_i e_i \neq \sum_{1 \leq i \leq n} y_i f_i.$$

\item [(ii)] If $k \geq 2$ and $p > k^{K k}$ (where $K$ is the universal constant from Lemma~\ref{Skepsilon}), then there exist subsets $A_i,B_i \subseteq \mathbb{F}_p^*\ (i \in [n])$  such that $A_i$ is $S_k$-type for every $i\in [n]$, $B_i$ is $N_k$-type for every $i\in [n]$ and if $x_i \in A_i,\ y_i\in B_i \ (i\in [n])$, then 
$$\sum_{1 \leq i \leq n} x_i e_i \neq \sum_{1 \leq i \leq n} y_i f_i.$$

\end{itemize}

\end{corollary}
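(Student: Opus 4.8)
The plan is to obtain both parts as immediate consequences of Corollary~\ref{randomdisjoint2}, whose only hypothesis on $p$ is the numerical inequality $N_k(p)\cdot S_k(p)<p-1$. Once this is verified, Corollary~\ref{randomdisjoint2} directly supplies, for the two given bases $\{e_i\}$ and $\{f_i\}$, subsets $A_i$ that are $S_k$-type and $B_i$ that are $N_k$-type with $\sum_i x_i e_i\ne\sum_i y_i f_i$ whenever $x_i\in A_i$ and $y_i\in B_i$; and since an $N_k$-type set is by definition in particular $S_k$-type, the output already has exactly the form required in both statements. So the whole task reduces to checking $N_k(p)\cdot S_k(p)<p-1$ in the two regimes.

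For part~(i) I would take $k=1$ and use the trivial equality $S_1(p)=N_1(p)$, which turns the required inequality into $S_1(p)^2<p-1$; this is exactly the content of Corollary~\ref{cor-s1} in the range $61<p\ne 79$. Hence Corollary~\ref{randomdisjoint2} applies with $k=1$, producing $S_1$-type sets $A_i$ and $N_1$-type (hence $S_1$-type) sets $B_i$ with the desired non-equality, which is statement~(i).

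For part~(ii), with $k\ge 2$ and $p>k^{Kk}$ for the constant $K\ge 32$ of Lemma~\ref{Skepsilon}, I would simply multiply the two size bounds at hand: Lemma~\ref{Skepsilon} gives $S_k(p)\le p^{1/(2k+2)}$ (this is where $p>k^{Kk}$ enters), and since $K\ge 32$ we also have $p>k^{32k}$, so Lemma~\ref{Nkepsilon} gives $N_k(p)\le p^{2k/(2k+1)}$. Therefore
$$N_k(p)\cdot S_k(p)\ \le\ p^{\frac{2k}{2k+1}+\frac{1}{2k+2}}\ =\ p^{\,1-\frac{1}{(2k+1)(2k+2)}}\ <\ p-1,$$
the last inequality being the only quantitative point; it follows, e.g., from $p^{1/((2k+1)(2k+2))}\ge 1+\frac{\ln p}{(2k+1)(2k+2)}$ together with $(p-1)\ln p\ge(2k+1)(2k+2)$, which holds with an enormous margin once $p>k^{32k}$. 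Feeding this into Corollary~\ref{randomdisjoint2} yields the $S_k$-type sets $A_i$ and $N_k$-type sets $B_i$ claimed in~(ii). I do not expect any genuine obstacle here: apart from this elementary estimate, the argument is pure bookkeeping on top of the lemmas proved earlier in Section~\ref{sec-prelim}.
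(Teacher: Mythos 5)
Your proposal is correct and follows essentially the same route as the paper: both parts reduce to checking the hypothesis $N_k(p)\cdot S_k(p)<p-1$ of Corollary~\ref{randomdisjoint2}, using $S_1(p)=N_1(p)$ together with Corollary~\ref{cor-s1} for part~(i), and multiplying the bounds from Lemma~\ref{Skepsilon} and Lemma~\ref{Nkepsilon} for part~(ii). The only addition you make is to spell out why $p^{1-\frac{1}{(2k+1)(2k+2)}}<p-1$ once $p>k^{Kk}$, a small estimate the paper leaves implicit.
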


\begin{proof}

Part (i) follows from Corollary~\ref{cor-s1} and Corollary~\ref{randomdisjoint2}. 
If $61 < p < 199,\ p \neq 79$ or $p = 257$, then from the constructions described in the Appendix we get again that
$S_1(p)^2 < p-1$, so the statement follows again from Corollary~\ref{randomdisjoint2}.

Part (ii) follows from Lemma~\ref{Skepsilon},  Lemma~\ref{Nkepsilon} and Corollary~\ref{randomdisjoint2} since 
$$N_k(p) \cdot S_k(p)\leq p^{1-\frac{1}{(2k+1)(2k+2)}} <p-1$$ if $k \geq 2$ and $p > k^{K k}$.

\end{proof}

\section{Different properties of counterexamples and their connection}\label{sec-group-ring}

In this section we introduce several kind of properties of counterexamples and prove results about their connection.

\noindent 
Let us prove first the following easy lemma.
 \begin{lemma}
Let $\{e_1,\dots,e_n\}$ be a basis for $\mathbb{F}_p^n$ and let $w_i := 1 - g^{e_i} \in \mathbb{F}_p[\mathbb{F}_p^n]$. Then the elements $\prod\limits_{1 \leq i \leq n} w_i^{b_i}$  
 $(0 \leq b_i \leq p-1,\ i \in [n])$ provide a basis for $\mathbb{F}_p[\mathbb{F}_p^n]$ as a vector space.
\end{lemma}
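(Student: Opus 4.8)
The plan is to combine a dimension count with a spanning argument. I would first recall that $\mathbb{F}_p[\mathbb{F}_p^n]$ is, by definition of the group ring, a free $\mathbb{F}_p$-module on the set $\{g^v : v \in \mathbb{F}_p^n\}$, so $\dim_{\mathbb{F}_p}\mathbb{F}_p[\mathbb{F}_p^n] = |\mathbb{F}_p^n| = p^n$. Since the number of exponent tuples $(b_1,\dots,b_n)$ with $0 \le b_i \le p-1$ is likewise $p^n$, it is enough to show that the $p^n$ elements $\prod_{1\le i\le n} w_i^{b_i}$ span $\mathbb{F}_p[\mathbb{F}_p^n]$; a spanning family of size at most the dimension is automatically a basis (in particular its members are then forced to be pairwise distinct, which is recorded for free).

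For the spanning claim I would check that every group element $g^v$ ($v\in\mathbb{F}_p^n$) lies in the span. Because $\{e_1,\dots,e_n\}$ is a basis of $\mathbb{F}_p^n$, we may write $v = \sum_{i=1}^n c_i e_i$ with integers $0 \le c_i \le p-1$, and then, using $g^{u+u'} = g^u g^{u'}$ and the binomial theorem,
$$g^v = \prod_{i=1}^n (g^{e_i})^{c_i} = \prod_{i=1}^n (1 - w_i)^{c_i} = \sum_{0\le b_i\le c_i} \Bigl(\prod_{i=1}^n (-1)^{b_i}\binom{c_i}{b_i}\Bigr)\prod_{i=1}^n w_i^{b_i}.$$
Every exponent appearing here satisfies $0 \le b_i \le c_i \le p-1$, so $g^v$ is an $\mathbb{F}_p$-linear combination of the claimed elements, which therefore span; combined with the dimension count this proves they form a basis.

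I do not expect a genuine obstacle; the only points to watch are (a) choosing the coefficients $c_i$ of $v$ as representatives in $\{0,\dots,p-1\}$ so that no power $w_i^{b_i}$ with $b_i\ge p$ is ever needed, and (b) invoking the dimension count so that ``spans'' upgrades to ``is a basis.'' It is worth noting in passing that $w_i^p = (1-g^{e_i})^p = 1 - g^{p e_i} = 1 - 1 = 0$ in characteristic $p$, so higher powers of $w_i$ vanish anyway. An alternative, slightly more structural proof would identify $\mathbb{F}_p[\mathbb{F}_p^n]$ with $\mathbb{F}_p[t_1,\dots,t_n]/(t_1^p-1,\dots,t_n^p-1)$ via $t_i \mapsto g^{e_i}$, rewrite the relations as $(t_i-1)^p = 0$ using the Frobenius, and substitute $w_i = 1 - t_i$ to obtain $\mathbb{F}_p[w_1,\dots,w_n]/(w_1^p,\dots,w_n^p)$, where $\{\prod_i w_i^{b_i} : 0\le b_i\le p-1\}$ is manifestly a basis; but the spanning argument above is the shortest route.
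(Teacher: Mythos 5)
Your proposal is correct and takes essentially the same route as the paper: show the $p^n$ products span, then invoke the dimension count $\dim_{\mathbb{F}_p}\mathbb{F}_p[\mathbb{F}_p^n]=p^n$. The only cosmetic difference is that the paper first observes spanning with arbitrary exponents and then reduces modulo the relation $w_i^p=0$, whereas you choose representatives $c_i\in\{0,\dots,p-1\}$ from the outset so that the binomial expansion never produces an exponent exceeding $p-1$; both variants are fine and you even note the $w_i^p=0$ identity in passing.
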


\begin{proof}
It is clear that the elements $\prod\limits_{1 \leq i \leq n} w_i^{b_i}$ with arbitrary exponents generate $\mathbb{F}_p[\mathbb{F}_p^n]$ as a vector space. On the other hand we know by calculation that $w_i^{p} = 0$ for each $1 \leq i \leq n$, thus we obtain that the elements $\prod\limits_{1 \leq i \leq n} w_i^{b_i}$  $(0 \leq b_i \leq p-1,\ i\in[n])$ generate $\mathbb{F}_p[\mathbb{F}_p^n]$ as a vector space.

However, $\dim(\mathbb{F}_p[\mathbb{F}_p^n]) = p^n$, therefore, they indeed form a basis.
\end{proof}

Let us prove the following proposition about different properties which are {\it  ``similar''} to being a counterexample to the Alon-Jaeger-Tarsi conjecture. (For a proof of the equivalence of (P1) and (P3) in a special case see \cite{Szegedy}.)

\begin{proposition}\label{properties}
Let $p> 3$ be a prime, $n \geq 1$ be an integer and let $M$ be a nonsingular $n\times n$ matrix over $\mathbb{F}_p$. Let $\omega$ denote a $p$-th root of unity. For every $i\in [n]$ let  $0 \leq t_{i} \leq p-1$ and $0 \leq t'_{i} \leq p-1$, and
for each $i\in [n]$ let $t_i$ different residues $c_{i,k} \in \mathbb{F}_p\ (k\in [t_i])$ be  and $t'_i$ different residues $d_{i,k} \in \mathbb{F}_p\ (k\in [t'_i])$ are given.

Let us denote the row vectors of $M$  by $a_1, \dots, a_n$ and let the standard basis vectors be denoted by $e_1, \dots, e_n$. Then the following three statements are equivalent:

\begin{itemize}
\item[(P1)]  There does not exist a vector $x \in \mathbb{F}_p^n$ such that $x_i \neq c_{i, k}$ for each $1 \leq i \leq n,\ 1 \leq k \leq t_i$  and $(Mx)_i \neq d_{i, k}$ for each $1 \leq i \leq n,\ 1 \leq k \leq t'_i$.

\item[(P2)] The polynomial $$h(x_1, \dots, x_n) = \prod\limits_{\substack{1 \leq i \leq n,\\ 1 \leq k \leq t_i'}} \left( \sum\limits_{1 \leq j \leq n} a_{i, j} x_j - d_{i, k}  \right) \cdot \prod\limits_{\substack{1 \leq i \leq n,\\ 1 \leq k \leq t_i} } (x_i - c_{i, k})$$ vanishes for all entries $x_1 = z_1, \dots, x_n = z_n$ where $z_1, \dots, z_n \in \mathbb{F}_p$.

\item[(P3)] The element 
$$ \prod\limits_{\substack{1 \leq i \leq n,\\ 1 \leq k \leq t_i}} (1 - \omega^{-c_{i, k}} g^{e_i}) \cdot \prod\limits_{\substack{1 \leq i \leq n,\\ 1 \leq k \leq t'_i}} (1 - \omega^{-d_{i, k}} g^{a_i}) \in \mathbb{C}[\mathbb{F}_p^n]$$ is zero.

\end{itemize}

\smallskip

\noindent
The following statement follows from the equivalent statements \emph{(P1)-(P2)-(P3)}:

\begin{itemize}
\item[(P4)] The element $$\prod\limits_{1 \leq i \leq n} (1 - g^{e_i})^{t_i} \cdot \prod\limits_{1 \leq i \leq n} (1 - g^{a_i})^{t'_i} \in \mathbb{F}_p[\mathbb{F}_p^n]$$
is zero.
\end{itemize}

\smallskip
\noindent
The following statement follows from statement \emph{(P4)}:

\begin{itemize}
\item[(P5)] For the polynomial $f$ defined as
$$f(x_1, \dots, x_n) = \prod_{1 \leq i \leq n} \left( \sum_{1 \leq j \leq n} a_{i, j} x_j \right)^{t_i'} \cdot \prod_{1 \leq i \leq n} x_i^{t_i}$$
we have $\deg(\overline{f}) < \sum\limits_{1 \leq i \leq n} t_i + \sum\limits_{1 \leq i \leq n} t'_i$.

\end{itemize}
\noindent
That is, we have $(\emph{P}1)\leftrightarrow (\emph{P}2) \leftrightarrow (\emph{P}3) \to (\emph{P}4) \to (\emph{P}5)$.
\end{proposition}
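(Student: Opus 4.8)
My plan is to establish the two equivalences $(\mathrm{P}1)\Leftrightarrow(\mathrm{P}2)\Leftrightarrow(\mathrm{P}3)$ and then the two implications $(\mathrm{P}3)\Rightarrow(\mathrm{P}4)\Rightarrow(\mathrm{P}5)$. The equivalence $(\mathrm{P}1)\Leftrightarrow(\mathrm{P}2)$ I would simply read off the factorised form of $h$: the factor $x_i-c_{i,k}$ vanishes at $z\in\mathbb{F}_p^n$ precisely when $z_i=c_{i,k}$, and the factor $\sum_j a_{i,j}x_j-d_{i,k}$ precisely when $(Mz)_i=d_{i,k}$; hence $h(z)=0$ exactly when $z$ is \emph{not} a vector of the type forbidden in $(\mathrm{P}1)$, and so $h$ vanishes on all of $\mathbb{F}_p^n$ exactly when no such vector exists. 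For $(\mathrm{P}2)\Leftrightarrow(\mathrm{P}3)$ I would pass to the character decomposition $\mathbb{C}[\mathbb{F}_p^n]\cong\mathbb{C}^{p^n}$, in which the $v$-th coordinate of $g^w$ (for $v,w\in\mathbb{F}_p^n$) is $\omega^{\langle v,w\rangle}$ and an element is $0$ iff all its coordinates are. Using $\langle v,e_i\rangle=v_i$ and $\langle v,a_i\rangle=(Mv)_i$, the $v$-th coordinate of the element in $(\mathrm{P}3)$ equals
$$\prod_{\substack{1\le i\le n\\1\le k\le t_i}}\bigl(1-\omega^{\,v_i-c_{i,k}}\bigr)\cdot\prod_{\substack{1\le i\le n\\1\le k\le t'_i}}\bigl(1-\omega^{\,(Mv)_i-d_{i,k}}\bigr),$$
and since $1-\omega^{m}=0$ (for $m\in\mathbb{Z}$) iff $p\mid m$, this vanishes iff $h(v)=0$; thus the element in $(\mathrm{P}3)$ is $0$ iff $h$ vanishes identically on $\mathbb{F}_p^n$, which is $(\mathrm{P}2)$.

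For $(\mathrm{P}3)\Rightarrow(\mathrm{P}4)$ I would observe that after expansion the element in $(\mathrm{P}3)$ has all coefficients in $\mathbb{Z}[\omega]$, so the identity holds already in $\mathbb{Z}[\omega][\mathbb{F}_p^n]$. There is a ring homomorphism $\mathbb{Z}[\omega]\to\mathbb{F}_p$ extending $\mathbb{Z}\to\mathbb{F}_p$ and sending $\omega\mapsto1$ (well defined because $1+\omega+\dots+\omega^{p-1}=0$ is sent to $p\equiv 0$). Applying the induced homomorphism $\mathbb{Z}[\omega][\mathbb{F}_p^n]\to\mathbb{F}_p[\mathbb{F}_p^n]$ to the identity replaces each $1-\omega^{-c_{i,k}}g^{e_i}$ by $1-g^{e_i}$ and each $1-\omega^{-d_{i,k}}g^{a_i}$ by $1-g^{a_i}$, yielding $(\mathrm{P}4)$.

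For $(\mathrm{P}4)\Rightarrow(\mathrm{P}5)$ I would put $w_i:=1-g^{e_i}$; since $w_i^p=1-(g^{e_i})^p=0$, the preceding lemma identifies $\mathbb{F}_p[\mathbb{F}_p^n]$ with $\mathbb{F}_p[w_1,\dots,w_n]/(w_1^p,\dots,w_n^p)$ (via $g^{e_i}=1-w_i$), a ring graded by total degree in the $w_i$. Writing $\ell_i(w):=\sum_j a_{i,j}w_j$ and using $g^{a_i}=\prod_j(1-w_j)^{a_{i,j}}$, one gets $1-g^{a_i}=\ell_i(w)+(\text{terms of degree}\ge 2)$, and hence, with $d:=\sum_i t_i+\sum_i t'_i$ and $f$ the polynomial of $(\mathrm{P}5)$,
$$\prod_{1\le i\le n}(1-g^{e_i})^{t_i}\cdot\prod_{1\le i\le n}(1-g^{a_i})^{t'_i}=f(w_1,\dots,w_n)+(\text{terms of degree}>d).$$
In $\mathbb{F}_p[w]/(w_i^p)$ the relations $w_i^p=0$ merely annihilate monomials and never lower degree, so the degree-$d$ homogeneous component of the left-hand side is exactly the image of the homogeneous degree-$d$ polynomial $f(w)$; by $(\mathrm{P}4)$ this component is $0$, i.e. every monomial of $f$ has some exponent $\ge p$. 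Finally, for a homogeneous $f$ of degree $d$ this last condition is equivalent to $\deg(\overline f)<d$: a degree-$d$ monomial of $f$ with all exponents $\le p-1$ is unchanged by the reduction and cannot be cancelled (every monomial of $f$ has total degree $d$, while the reduction strictly lowers the degree of any monomial that has an exponent $\ge p$), whereas if every monomial of $f$ has an exponent $\ge p$ then $\overline f$ is a sum of monomials of degree $<d$. This gives $(\mathrm{P}5)$.

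The routine ingredients are the character computation and the collapsing homomorphism $\omega\mapsto1$; the step I expect to demand the most care is $(\mathrm{P}4)\Rightarrow(\mathrm{P}5)$, where one must line up the augmentation filtration of $\mathbb{F}_p[\mathbb{F}_p^n]$ with the variable-wise reduction defining $\overline f$ and, in particular, check that no cancellation among the degree-$d$ monomials of $f$ can occur.
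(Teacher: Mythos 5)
Your proof is correct, and its overall architecture matches the paper's: establish $(\mathrm{P}1)\Leftrightarrow(\mathrm{P}2)\Leftrightarrow(\mathrm{P}3)$ via the character/Fourier decomposition of $\mathbb{C}[\mathbb{F}_p^n]$, then push forward to $(\mathrm{P}4)$ and finally to $(\mathrm{P}5)$. The one place where you genuinely diverge from the paper is the step $(\mathrm{P}3)\Rightarrow(\mathrm{P}4)$. The paper expands the $\mathbb{C}$-group-ring element, collects the coefficient of each $g^v$ as a $\mathbb{Z}[\omega]$-linear combination $\sum_{i=0}^{p-1} r_{v,i}\omega^i$, invokes the fact that the only $\mathbb{Z}$-linear relation among $1,\omega,\dots,\omega^{p-1}$ is $\sum_i\omega^i=0$ to conclude all $r_{v,i}$ are equal, and hence $p\mid\sum_i r_{v,i}=r_v$. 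You instead package the same fact into the ring homomorphism $\mathbb{Z}[\omega]\to\mathbb{F}_p$, $\omega\mapsto 1$ (well-defined because the minimal polynomial $1+x+\dots+x^{p-1}$ maps to $p\equiv 0$), and simply apply the induced map $\mathbb{Z}[\omega][\mathbb{F}_p^n]\to\mathbb{F}_p[\mathbb{F}_p^n]$ to the identity in $(\mathrm{P}3)$. This is cleaner and avoids any by-hand analysis of coefficients; it is essentially the conceptual formulation of the paper's computation. For $(\mathrm{P}4)\Rightarrow(\mathrm{P}5)$ you and the paper use the same substitution $w_i=1-g^{e_i}$; your write-up is more explicit than the paper's about why the isomorphism $\mathbb{F}_p[\mathbb{F}_p^n]\cong\mathbb{F}_p[w_1,\dots,w_n]/(w_1^p,\dots,w_n^p)$ is graded (so that the degree-$d$ component of a vanishing element vanishes) and why no cancellation among the unreduced degree-$d$ monomials of $f$ can occur, but the underlying reasoning is identical. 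In short: correct, same skeleton, with a tidier ring-homomorphism treatment of $(\mathrm{P}3)\Rightarrow(\mathrm{P}4)$.
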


\begin{proof}

The equivalence of the statements (P1) and (P2) is clear.

Let us prove  first the implication $\text{(P1)} \to \text{(P3)}$, so under the conditions of statement (P1) we have to prove that $$\prod\limits_{\substack{1 \leq i \leq n, \\1 \leq k \leq t_i}} (1 - \omega^{-c_{i, k}} g^{e_i}) \cdot \prod\limits_{\substack{1 \leq i \leq n,\\ 1 \leq k \leq t'_i}} (1 - \omega^{-d_{i, k}} g^{a_i}) = 0 \in \mathbb{C}[\mathbb{F}_p^n].$$

Let $C(\mathbb{F}_p^m)$ denote the space of complex valued functions $\mathbb{F}_p^m \to \mathbb{C}$. Let the usual Fourier transform $F : C(\mathbb{F}_p^n) \to C(\mathbb{F}_p^n)$ be given by
$$F(f) ( v) = \sum_{w \in \mathbb{F}_p^n}  f(w) \cdot \omega^{\langle v, w \rangle},$$
where $\langle v, w \rangle = \sum\limits_{1 \leq i \leq n} v_i w_i$ is the standard scalar product.


It is well known that $F(F(f)) = R(f) * p^n$, where $R : C(\mathbb{F}_p^n) \to C(\mathbb{F}_p^n)$ is given by $R(f(x)) = f(-x)$. Also, for the convolution operation $* : C(\mathbb{F}_p^n) \otimes C(\mathbb{F}_p^n) \to C(\mathbb{F}_p^n)$ defined by 
$$f * g(v) = \sum_{w \in \mathbb{F}_p^n}f(w) \cdot g(v-w)  \cdot \frac{1}{p^n},$$
it is well known that $F(f * g) = F(f) \cdot F(g)$ and $F(f \cdot g) = F(f) * F(g)$.

Notice that we have a natural map $i : \mathbb{C}[\mathbb{F}_p^n] \to  C(\mathbb{F}_p^n)$ sending an element $\sum\limits_{v \in \mathbb{F}_p^n} c_v g^v$ to the function $f \in  C(\mathbb{F}_p^n)$ given by $f(v) = c_v$. Note that $i(x \cdot y) = i(x) * i(y) \cdot p^n$.

By using these facts we have to prove that 
\begin{equation}\label{eq-iomega} \prod\limits_{\substack{1 \leq i \leq n,\\ 1 \leq k \leq t_i}} i(1 - \omega^{-c_{i, k}} g^{e_i}) \cdot \prod\limits_{\substack{1 \leq i \leq n,\\ 1 \leq k \leq t'_i}} i(1 - \omega^{-d_{i, k}} g^{a_i}) = 0, 
\end{equation}
where the product is with respect to the convolution operator $*$ in $C(\mathbb{F}_p^n)$.

Therefore, equivalently, by using Fourier transform, we have to prove that
\begin{equation}\label{eq-Fourier}
\prod\limits_{\substack{1 \leq i \leq n,\\ 1 \leq k \leq t_i}} F(i(1 - \omega^{-c_{i, k}} g^{e_i})) \cdot \prod\limits_{\substack{1 \leq i \leq n,\\ 1 \leq k \leq t'_i}} F(i(1 - \omega^{-d_{i, k}} g^{a_i})) = 0.
\end{equation}

Notice however that if $\langle v, w \rangle = a$, then $F(i(1 - \omega^{-a} g^{v}))(w) = 0$, and we know that for every vector $v \in \mathbb{F}_p^m$ there exists $i\in [n],\ k\in [t_i]$ such that $\langle v, e_i \rangle = c_{i, k}$ or $i\in[n],\ k\in[t'_i]$ such that $\langle v, a_i \rangle = d_{i, k}$, so indeed we get that \eqref{eq-Fourier} holds,
which proves the implication $\text{(P1)} \to \text{(P3)}$.

\medskip

Let us continue with the proof of the direction $\text{(P3)} \to \text{(P1)}$.

\noindent
Let us assume that \eqref{eq-iomega} holds, then, equivalently, \eqref{eq-Fourier} also holds.



\noindent
On the other hand if $\langle v, w \rangle \neq a$, then $$F(i(1 - \omega^{-a} g^{v}))(w) = 1 - \omega^{-a} \cdot \omega^{\langle v, w \rangle} \neq  0,$$
so this means that for every vector $v \in \mathbb{F}_p^m$ either there exist $i\in[n],\ 1 k\in[t_i] $ such that $\langle v, e_i \rangle = c_{i, k}$ or there exist $i\in[n],\ k\in [t'_i] $ such that $\langle v, a_i\rangle = d_{i, k} $ which proves the direction $\text{(P3)} \to \text{(P1)}$.

\medskip

Next, let us prove that statement (P4) follows from the statements (P1)-(P2)-(P3).

\noindent
Let us write  
$$\prod_{1 \leq i \leq n} (1 - g^{e_i})^{t_i} \cdot   \prod_{1 \leq i \leq n} (1 - g^{a_i})^{t'_i} = 
\sum_{v \in G} r_v g^v \in \mathbb{Z}[\mathbb{F}_p^n],$$
then our aim is to show that $p \mid  r_v$ for every $v \in \mathbb{F}_p^n$.

Let us write
\begin{equation*}
    T := \prod\limits_{\substack{1 \leq i \leq n,\\ 1 \leq k \leq t_i}} (1 - \omega^{-c_{i, k}} g^{e_i}) \cdot \prod\limits_{\substack{1 \leq i \leq n,\\ 1 \leq k \leq t'_i}} (1 - \omega^{-d_{i, k}} g^{a_i})  = \sum_{v \in \mathbb{F}_p^n} \left( \sum_{0 \leq i \leq p-1} r_{v, i} \cdot \omega^i \right) g^v,
\end{equation*}
where we certainly have $r_v = \sum\limits_{0 \leq i \leq p-1} r_{v, i}$.

Since we know that $T = 0$, we have $\sum\limits_{0 \leq i \leq p-1} r_{v, i} \cdot \omega^i = 0$ for each $v \in G$.
There is only one linear dependence among the $p$-th roots of unity over $\mathbb{Z}$, namely $\sum_{0 \leq i \leq p-1} \omega^{i} = 0$. This indeed yields that $p \mid \sum\limits_{0 \leq i \leq p-1} r_{v, i} = r_v$, which finishes the proof of this implication.

\medskip

Finally, let us prove  that statement (P5) follows from statement (P4).

\noindent
Note that we can express property (P4) in the following way: 
$$\prod_{1 \leq i \leq n} w_i^{t_i} \cdot \prod_{1 \leq i \leq n} (1 - g^{a_i})^{t_i'} = 0 \in \mathbb{F}_p[\mathbb{F}_p^n].$$

For $1 \leq i \leq n $ let us consider the multiset $S_i$ of formal variables $w_j$ which contains $0 \leq a_{i, j} \leq p-1$ pieces of $w_j$.

If we write $g^{e_i}$ as $1- w_i$ for all $1 \leq i \leq n$, then we can express $1 - g^{a_i}$ as a polynomial of the variables $w_i$ such that 
$$1 - g^{a_i} =  \sum_{S':\ \emptyset\ne S' \subseteq S_i} (-1)^{|S'| + 1} \cdot \prod(S').$$

Notice that the the smallest degree $1$ part in the variables $w_i$ of $1 - g^{a_i} $ is $\sum\limits_{1 \leq j \leq n} a_{i, j} w_j$.

Since 
$$\prod_{1 \leq i \leq n} w_i^{t_i} \cdot \prod_{1 \leq i \leq n} (1 - g^{a_i})^{t'_i} = 0 \in \mathbb{F}_p[\mathbb{F}_p^n], $$
we indeed get that every monomial in 
$$\prod_{1 \leq i \leq n} \left( \sum_{1 \leq j \leq n} a_{i, j} w_j \right)^{t'_i} \cdot \prod_{1 \leq i \leq n} w_i^{t_i}$$
must have an exponent which is at least $p$, which indeed shows that $$\deg(\overline{f}) < \sum_{1 \leq i \leq n} t_i + \sum_{1 \leq i \leq n} t'_i.$$

\end{proof}

Let us formulate as a corollary the special case when we set $t_i = t'_i = 1$ (for $i\in [n]$) and $c_{i, 1} = d_{i, 1} = 0$ (for $i\in [n]$), since this is the case corresponding directly to the Alon-Jaeger-Tarsi conjecture. 


\begin{corollary}\label{properties2}
Let $p> 3$ be a prime, $n \geq 1$ be an integer and let $M$ be a nonsingular $n\times n$ matrix over $\mathbb{F}_p$. Let us denote the row vectors of $M$  by $a_1, \dots, a_n$ and let the standard basis vectors be denoted by $e_1, \dots, e_n$. Then the following three statements are equivalent:
\begin{itemize}
\item[(P'1)]  There does not exist a vector $x \in \mathbb{F}_p^n$ such that $x_i \neq 0$ for each $1 \leq i \leq n$  and $(Mx)_i \neq 0$ for each $1 \leq i \leq n$.

\item[(P'2)] The polynomial $$h(x_1, \dots, x_n) = \prod\limits_{1 \leq i \leq n} \left( \sum\limits_{1 \leq j \leq n} a_{i, j} x_j \right) \cdot \prod\limits_{1 \leq i \leq n} x_i$$
vanishes for all entries $x_1 = z_1, \dots, x_n = z_n$ where $z_1, \dots, z_n \in \mathbb{F}_p$.

\item[(P'3)] The element 
$$ \prod\limits_{1 \leq i \leq n} (1 - g^{e_i}) \cdot \prod\limits_{1 \leq i \leq n} (1 - g^{a_i}) \in \mathbb{Z}[\mathbb{F}_p^n]$$ is zero.

\end{itemize}

\noindent
The following statement follows from the equivalent statements \emph{(P'1)-(P'2)-(P'3)}:

\begin{itemize}

\item[(P'4)] The element $\prod\limits_{1 \leq i \leq n} (1 - g^{e_i}) \cdot \prod\limits_{1 \leq i \leq n} (1 - g^{a_i}) \in \mathbb{F}_p[\mathbb{F}_p^n]$ is zero.

\end{itemize}

\noindent
The following statement follows from the statement \emph{(P'4)}:

\begin{itemize}
\item[(P'5)] For the polynomial $f$ defined as $$f(x_1, \dots, x_n) = \prod_{1 \leq i \leq n} \left( \sum_{1 \leq j \leq n} a_{i, j} x_j \right) \cdot \prod_{1 \leq i \leq n} x_i$$ we have $\deg(\overline{f}) < 2n$.
\end{itemize}
That is, we have $(\emph{P'}1)\leftrightarrow (\emph{P'}2) \leftrightarrow (\emph{P'}3) \to (\emph{P'}4) \to (\emph{P'}5)$.
\end{corollary}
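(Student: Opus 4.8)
The plan is to obtain Corollary~\ref{properties2} as the special case of Proposition~\ref{properties} in which one takes $t_i = t'_i = 1$ for every $i \in [n]$ and lets the unique forbidden residues be $c_{i,1} = d_{i,1} = 0$ for every $i$. First I would verify that under this substitution the five statements of the Proposition specialize term by term to the five statements of the Corollary: statement (P1) becomes (P'1) verbatim; in (P2) each factor $x_i - c_{i,1}$ collapses to $x_i$ and each factor $\sum_j a_{i,j}x_j - d_{i,1}$ collapses to $\sum_j a_{i,j}x_j$, so (P2) becomes (P'2); the exponents all being equal to $1$ turns (P4) into (P'4); and since $\sum_i t_i + \sum_i t'_i = 2n$ and the polynomial $f$ in (P5) is exactly the one appearing in (P'5), statement (P5) becomes (P'5). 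This already yields the implications $(\mathrm{P}'1) \leftrightarrow (\mathrm{P}'2) \to (\mathrm{P}'4) \to (\mathrm{P}'5)$.

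The one point requiring a short additional remark is that statement (P3) of the Proposition is phrased over $\mathbb{C}[\mathbb{F}_p^n]$ in terms of a $p$-th root of unity $\omega$, whereas (P'3) is phrased over $\mathbb{Z}[\mathbb{F}_p^n]$. However, once all the residues $c_{i,k}, d_{i,k}$ are set to $0$, every factor $\omega^{-c_{i,k}}$ and $\omega^{-d_{i,k}}$ equals $1$, so the element in (P3) is literally
$$\prod_{1 \leq i \leq n} (1 - g^{e_i}) \cdot \prod_{1 \leq i \leq n} (1 - g^{a_i}) \in \mathbb{Z}[\mathbb{F}_p^n] \subseteq \mathbb{C}[\mathbb{F}_p^n].$$
An element of a group ring is zero exactly when all its coefficients vanish, and here those coefficients are integers; hence the displayed element is zero in $\mathbb{C}[\mathbb{F}_p^n]$ if and only if it is zero in $\mathbb{Z}[\mathbb{F}_p^n]$. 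Therefore (P3) is equivalent to (P'3) in this case, which inserts (P'3) into the equivalence chain inherited from the Proposition, completing $(\mathrm{P}'1) \leftrightarrow (\mathrm{P}'2) \leftrightarrow (\mathrm{P}'3) \to (\mathrm{P}'4) \to (\mathrm{P}'5)$.

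I do not anticipate any genuine obstacle: the entire content is supplied by Proposition~\ref{properties}, and the only work is the careful bookkeeping of the specialization, the one useful observation being that with all shifts equal to zero the complex identity (P3) sharpens to the integral identity (P'3).
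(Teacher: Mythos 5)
Your proof is correct and takes essentially the same approach as the paper, which simply states Corollary~\ref{properties2} as the specialization of Proposition~\ref{properties} to $t_i = t'_i = 1$ and $c_{i,1} = d_{i,1} = 0$. Your additional remark that the factors $\omega^{-c_{i,k}} = \omega^{-d_{i,k}} = 1$ make the element in (P3) lie in $\mathbb{Z}[\mathbb{F}_p^n]$, so vanishing over $\mathbb{C}$ coincides with vanishing over $\mathbb{Z}$, is exactly the implicit observation that justifies the reformulation of (P3) as (P'3).
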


\bigskip

We propose the following conjecture, already mentioned in the introduction, which claims that if $p > 3$, then in fact property (P'5) is not weaker than
property (P'4).

\begin{conjecture2}
Let $p > 3$ be a prime and let $M$ be a nonsingular $n \times n$ matrix over $\mathbb{F}_p$ with
row vectors $a_1, \dots,a_n$, finally, let the standard basis vectors of $\mathbb{F}_p^n$ be $e_1,\dots,e_n$.

\noindent
Then the $\mathbb{Z}$ group ring identity 
$$\prod_{1 \leq i \leq n} (1 - g^{e_i}) \cdot   \prod_{1 \leq i \leq n} (1 - g^{a_i})  = 0 \in \mathbb{Z}[G]$$
follows from the mod $p$ group ring identity 
$$\prod_{1 \leq i \leq n} (1 - g^{e_i}) \cdot   \prod_{1 \leq i \leq n} (1 - g^{a_i})  = 0 \in \mathbb{F}_p[G].$$
\end{conjecture2}

\subsection{Equivalent characterizations of properties (P'1)-(P'4)}

In this subsection we consider two properties of two bases of $\mathbb{F}_p^n$:
\begin{itemize}
\item The stronger one is that it is a counterexample for the Alon-Jaeger-Tarsi conjecture, which is equivalent with the properties (P'1)-(P'2)-(P'3) of Corollary~\ref{properties2}.

\item The weaker one is property (P'4) which is the same identity as property (P'3) just in the group ring with mod $p$ coefficients.

\end{itemize}

\bigskip

\bigskip

Now, we express the equivalent statements (P'1)-(P'2)-(P'3) in yet another way:

Let us denote the inverse of the matrix $M$ by $M'$ and let its columns be $a'_1,  \dots, a'_n$.

\noindent
Notice that $\langle a'_1, x\rangle , \dots, \langle a'_n, x\rangle$ parametrize a coordinate system on $\mathbb{F}_p^n$ such that the vectors $a_1, \dots, a_n$ lie on the coordinate lines.

Now, we prove a proposition giving an equivalent characterisation of properties (P'1)-(P'2)-(P'3).

\begin{proposition}\label{equivalent123}
For a nonsingular matrix $M \in M^{n \times n}(\mathbb{F}_p)$ properties (P'1)-(P'2)-(P'3) are equivalent to the following statement:

\smallskip

If $f \in C( \mathbb{F}_p^n)$ such that for every $x \in \mathbb{F}_p$ and $i \in [n]$ we have 
$$\sum\limits_{\substack{v \in \mathbb{F}_p^n,\\ v_i = x}} f(v) = 0$$
and $g \in C( \mathbb{F}_p^n)$ such that for every $x \in \mathbb{F}_p$ and $i \in [n]$ we have 
$$\sum\limits_{\substack{v \in \mathbb{F}_p^n,\\ \langle a'_i, v \rangle = x}} g(v) = 0,$$
then $\sum\limits_{ v \in \mathbb{F}_p^n} f(v) \cdot g(v) = 0$.
\end{proposition}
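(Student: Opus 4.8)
The plan is to move to the Fourier side and use the same dictionary as in the proof of Proposition~\ref{properties}: the embedding $i\colon\mathbb C[\mathbb F_p^n]\to C(\mathbb F_p^n)$, the Fourier transform $F$, Parseval, and the fact — already established there — that (P'1)-(P'2)-(P'3) is equivalent to the vanishing, for every $w\in\mathbb F_p^n$, of $\prod_{i}(1-\omega^{\langle e_i,w\rangle})\prod_i(1-\omega^{\langle a_i,w\rangle})$; equivalently, that every $w\in\mathbb F_p^n$ satisfies $w_i=0$ for some $i$ or $\langle a_i,w\rangle=0$ for some $i$. So it suffices to show that the functional statement is equivalent to this same covering condition.

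First I would translate the two hypotheses into Fourier language. The conditions on $f$ say that each ``slice sum'' of $f$ in the $i$-th coordinate vanishes; since the Fourier transform of that one-variable marginal, evaluated at $t$, equals $\hat f(t e_i)$, this is precisely the assertion that $\hat f$ vanishes on the line $\mathbb F_p e_i$ for every $i$. An identical computation — using that the columns $a'_i$ of $M^{-1}$ satisfy $\langle a'_i,a_j\rangle=\delta_{ij}$, so that $\langle a'_i,\cdot\rangle$ are genuinely the coordinates in the $\{a_j\}$-basis — shows the conditions on $g$ say that $\hat g$ vanishes on the line $\mathbb F_p a'_i$ for every $i$; alternatively one substitutes $v\mapsto M^{-1}v$ (or transposes) to reduce the statement about $g$ to the one about $f$. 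By Parseval, $\sum_v f(v)g(v)=p^{-n}\sum_u\hat f(u)\,\hat g(-u)$ up to the harmless constant $p^{-n}$. Because $F$ is a bijection of $C(\mathbb F_p^n)$, $\hat f$ (respectively $\hat g$) ranges over \emph{all} functions supported off $\bigcup_i\mathbb F_p e_i$ (respectively off $\bigcup_i\mathbb F_p a'_i$); testing against Dirac spikes $\hat f=\delta_{u}$, $\hat g=\delta_{-u}$ then shows the functional statement holds if and only if there is no $u$ lying outside both families of lines — i.e.\ if and only if the $2n$ lines $\mathbb F_pe_1,\dots,\mathbb F_pe_n,\mathbb F_pa'_1,\dots,\mathbb F_pa'_n$ cover $\mathbb F_p^n$ (the union $\bigcup_i\mathbb F_p a'_i$ being symmetric under $u\mapsto-u$).

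It then remains to identify this covering condition with the one characterising (P'1)-(P'2)-(P'3). I would do this by passing between the two coordinate systems — the standard one, and the one with coordinates $z_i=\langle a'_i,\cdot\rangle$ in which the rows $a_i$ of $M$ are the standard basis vectors and the $e_i$ become the rows of $M^{-1}$ — and using the orthogonality relations $\langle e_i,e_j\rangle=\delta_{ij}$ and $\langle a'_i,a_j\rangle=\delta_{ij}$ to match the normal directions appearing on the (P'3) side with the lines appearing on the functional-statement side. This reconciliation is the step I expect to be the main obstacle: one must keep careful track of which of $\{e_i\}$, $\{a_i\}$, $\{a'_i\}$ plays which role on each side, and verify that ``$w$ lies on some hyperplane $e_i^{\perp}$ or $a_i^{\perp}$'' and ``$w$ lies on some coordinate line of one of the two systems'' really describe the same set of $w$. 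Everything else — the Fourier dictionary and the Parseval manipulation — is routine.
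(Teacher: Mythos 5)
Your Fourier translation of the slice conditions is where the argument breaks. Reading $\sum_{v\colon v_i=x}f(v)=0$ as a sum over the affine \emph{hyperplane} $\{v\colon v_i=x\}$, you correctly compute that $\hat f$ vanishes on the one-dimensional line $\mathbb F_p e_i$, and likewise that $\hat g$ vanishes on the line $\mathbb F_p a'_i$. Parseval then reduces the functional statement to the assertion that the $2n$ \emph{lines} $\mathbb F_pe_1,\dots,\mathbb F_pe_n,\mathbb F_pa'_1,\dots,\mathbb F_pa'_n$ cover $\mathbb F_p^n$. But (P'1) is the covering of $\mathbb F_p^n$ by the $2n$ \emph{hyperplanes} $\{v\colon v_i=0\}$ and $\{v\colon\langle a_i,v\rangle=0\}$, and for $n\geq 3$ these two covering conditions are not interchangeable: $2n$ lines contain at most $2n(p-1)+1$ points of $\mathbb F_p^n$ and can never cover, whereas the hyperplane covering is exactly what it means to be a counterexample to the conjecture (and does occur, e.g.\ over $\mathbb F_2$ for block matrices, where one can then exhibit admissible $f,g$ with $\sum f g\neq 0$). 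So the ``reconciliation'' you flag as the main obstacle is not a bookkeeping issue — under this reading it is false, and the proof cannot be completed.

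What makes the argument work — and what the paper's own proof, together with the explicit lemma in the proof of Proposition~\ref{equivalent4} involving $\sum_{0\le t\le p-1}f(v+te_i)=0$, clearly intends — is the \emph{line}-sum reading of the hypotheses: $\sum_{t\in\mathbb F_p}f(v+te_i)=0$ for every $v\in\mathbb F_p^n$ and $i\in[n]$, and $\sum_{t\in\mathbb F_p}g(v+ta_i)=0$ for every $v$ and $i$. These are the Fourier duals of $\hat f$ vanishing on the hyperplane $\{u\colon u_i=0\}$, respectively $\hat g$ vanishing on $\{u\colon\langle a_i,u\rangle=0\}$. Then $\hat f$ is supported on $\{u\colon u_i\ne0\ \forall i\}$ and $\hat g$ on $\{u\colon\langle a_i,u\rangle\ne0\ \forall i\}$, and the Parseval identity $\sum_v f(v)g(v)=p^{-n}\sum_u\hat f(u)\hat g(-u)$ (with the $g$-conditions symmetric under $v\mapsto -v$) shows $\sum fg=0$ for all admissible $f,g$ precisely when those two support sets are disjoint, which is (P'1) verbatim. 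The converse you sketch via characters $f(w)=\omega^{\langle w,v\rangle}$, $g(w)=\omega^{-\langle w,v\rangle}$ is fine and is exactly the paper's argument. So the overall strategy is correct, but the translation of the hypotheses must be redone with line sums in place of the hyperplane sums you used; as written, the reduction lands on a different, strictly stronger covering condition and the proof stalls exactly where you feared it would.
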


\begin{proof}

It can be easily seen that for $f \in C( \mathbb{F}_p^n)$, for every $x \in \mathbb{F}_p$ and $i\in [n]$ the equation $\sum\limits_{v \in \mathbb{F}_p^n, v_i = x} f(v) = 0$ holds if and only if
$F(f)$ vanishes on all the coordinate subspaces $v_i = 0$ for some $i\in[ n]$.

Similarly, for $g \in C( \mathbb{F}_p^n)$, for every $x \in \mathbb{F}_p$ and $i\in [n]$  the equation $\sum\limits_{v \in \mathbb{F}_p^n, \langle a'_i, v \rangle = x} g(v) = 0$ holds if and only if
$F(g)$ vanishes on all the coordinate subspaces $\langle a_i, v \rangle = 0$ for some $i\in [n]$.

Now, if (P'1)-(P'2)-(P'3)  hold, then for every vector $v \in \mathbb{F}_p^n$ we have an index $i\in [n]$ such that $v_i = 0$ or $\langle a_i, v \rangle = 0$, which means that 
if $f$ and $g$ satisfy our conditions, then 
$$\sum\limits_{v  \in \mathbb{F}_p^n} f(v) \cdot g(v) = \sum_{v  \in \mathbb{F}_p^n} F(f)(v) \cdot F(g)(v) = 0,$$ 
which proves one direction.

For the other direction, let us assume that there exists a vector $v \in \mathbb{F}_p^n$ such that for every $i\in [n]$ we have $v_i \neq 0$ and $\langle a_i, v \rangle \neq 0$. Then for the functions
$f (w): = \omega^{\langle w, v \rangle}$ and $g(w) := \omega^{-\langle w, v \rangle}$ we have 
$$\sum_{v  \in \mathbb{F}_p^n} f(v) \cdot g(v)  = p^n \neq 0$$
and on the other hand  for every $x \in \mathbb{F}_p$ and $i\in [n]$ we have 
$$\sum\limits_{\substack{v \in \mathbb{F}_p^n,\\ v_i = x}} f(v) = 0$$ and for every $x \in \mathbb{F}_p$ and $1 \leq i \leq n$ we have  $$\sum\limits_{\substack{v \in \mathbb{F}_p^n,\\ \langle a'_i, v\rangle = x}} g(v) = 0,$$
which proves the other direction.

\end{proof}

Let us consider the space of functions $\mathbb{F}_p^n \to \mathbb{F}_p$ and let us denote it by $C_p(\mathbb{F}_p^n)$,
the space $C_p(\mathbb{F}_p^n)$ is a  vector space over $\mathbb{F}_p$ and its zero element is the constant zero function that will be denoted simply by $0$.

In the following we prove a similar equivalent characterisation for property (P'4):

\begin{proposition}\label{equivalent4}
For a nonsingular matrix $M \in M^{n \times n}(\mathbb{F}_p)$ property (P'4) is equivalent to the following statement:
\bigskip
If $f \in C_p( \mathbb{F}_p^n)$ such that for every $x \in \mathbb{F}_p$ and $i \in [n]$ we have 
$$\sum\limits_{\substack{v \in \mathbb{F}_p^n,\\ v_i = x}} f(v) = 0$$
and $g \in C_p( \mathbb{F}_p^n)$ such that for every $x \in \mathbb{F}_p$ and $i \in [n]$ we have 
$$\sum\limits_{\substack{v \in \mathbb{F}_p^n,\\ \langle a'_i, v\rangle = x}} g(v) = 0,$$ then
$$\sum_{ v \in \mathbb{F}_p^n} f(v) \cdot g(v) = 0.$$
\end{proposition}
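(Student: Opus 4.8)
The plan is to imitate, over $\mathbb{F}_p$, the proof of Proposition~\ref{equivalent123}, replacing the Fourier transform — which is not available in characteristic $p$ — by a direct computation inside the group algebra $R:=\mathbb{F}_p[\mathbb{F}_p^n]$. First I would identify $C_p(\mathbb{F}_p^n)$ with $R$ by $f\mapsto F:=\sum_{v\in\mathbb{F}_p^n}f(v)\,g^{v}$. Under this identification, translating a function corresponds to multiplying by a group element, and, writing $\varepsilon$ for the $\mathbb{F}_p$-linear functional ``coefficient of $g^{0}$'' and $G^{*}:=\sum_v g(v)\,g^{-v}$ for the image of $G$ under the involution $g^{v}\mapsto g^{-v}$, one checks that $\sum_{v}f(v)g(v)=\varepsilon(F\,G^{*})$.

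The key step is to rewrite the two hypotheses as divisibility statements in $R$. I would show that the hypothesis on $f$ is equivalent to $(1-g^{e_i})^{p-1}F=0$ for every $i$ — equivalently, $\prod_{i}(1-g^{e_i})$ divides $F$ in $R$ — using the identity $\sum_{t\in\mathbb{F}_p}g^{tw}=(1-g^{w})^{p-1}$, the computation $\operatorname{Ann}_R\!\big((1-g^{w})^{p-1}\big)=(1-g^{w})R$ for $w\ne0$ (obtained by restricting to the subalgebra $\mathbb{F}_p[\langle g^{w}\rangle]\cong\mathbb{F}_p[\mathbb{F}_p]$, over which $R$ is free), and $\bigcap_i(1-g^{e_i})R=\big(\prod_i(1-g^{e_i})\big)R$. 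Since $\langle a'_1,v\rangle,\dots,\langle a'_n,v\rangle$ are the coordinates of $v$ in the basis $a_1,\dots,a_n$, running the same computation in that basis shows the hypothesis on $g$ is equivalent to $\prod_i(1-g^{a_i})$ dividing $G$.

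Granting this, write $F=\prod_i(1-g^{e_i})\,r$ and $G=\prod_i(1-g^{a_i})\,s$ with $r,s\in R$. Then
$$\sum_v f(v)g(v)=\varepsilon\Big(\prod_i(1-g^{e_i})\cdot\Big(\prod_i(1-g^{a_i})\Big)^{\!*}\cdot r\,s^{*}\Big),$$
and $\big(\prod_i(1-g^{a_i})\big)^{*}=\prod_i(1-g^{-a_i})=\big(\prod_i(-g^{-a_i})\big)\cdot\prod_i(1-g^{a_i})$ with $\prod_i(-g^{-a_i})$ a unit of $R$, so the factor multiplying $r\,s^{*}$ is a unit times $z:=\prod_i(1-g^{e_i})\prod_i(1-g^{a_i})$, the element appearing in (P'4). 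If (P'4) holds then $z=0$, hence the pairing vanishes for every admissible pair $f,g$. If (P'4) fails then $z\ne0$, so $z$ has a nonzero coefficient, say at $g^{v_0}$; choosing $r=g^{-v_0}$, $s=1$ gives admissible $F=\prod_i(1-g^{e_i})g^{-v_0}$ and $G=\prod_i(1-g^{a_i})$ for which $\sum_v f(v)g(v)=\varepsilon(z\,g^{-v_0})$ equals the coefficient of $g^{v_0}$ in $z$, which is nonzero. This yields the asserted equivalence.

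I expect the only substantial work to be the second paragraph: deciding exactly which ideal membership each hypothesis encodes and verifying the two auxiliary facts about $R$. The rest is formal, the decisive observation being that $\big(\prod_i(1-g^{a_i})\big)^{*}$ differs from $\prod_i(1-g^{a_i})$ only by a unit of $R$, so that the sole obstruction to the bilinear form $(f,g)\mapsto\sum_v f(v)g(v)$ vanishing on all admissible pairs is exactly the vanishing of $z$.
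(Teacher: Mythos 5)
Your proof is correct and takes a genuinely different route from the paper's. The paper writes functions in $C_p(\mathbb{F}_p^n)$ as reduced polynomials, introduces the difference operators $\delta_i,\delta'_i$, computes $\imm(\delta_1\circ\dots\circ\delta_n)$ and $\kerr(\delta_1\circ\dots\circ\delta_n)$ via degree conditions on reduced monomials, and combines this with a dimension count to identify $\imm(\delta_1\circ\dots\circ\delta_n)^\perp$ with $\kerr(\delta_1\circ\dots\circ\delta_n)$. You instead work entirely inside $R=\mathbb{F}_p[\mathbb{F}_p^n]$: your two structural facts --- that $\operatorname{Ann}_R\bigl((1-g^w)^{p-1}\bigr)=(1-g^w)R$ for $w\ne 0$, by freeness of $R$ over $\mathbb{F}_p[\langle g^w\rangle]\cong\mathbb{F}_p[y]/(y^p)$, and that $\bigcap_i(1-g^{e_i})R=\bigl(\prod_i(1-g^{e_i})\bigr)R$, read off from the monomial basis of $R\cong\mathbb{F}_p[y_1,\dots,y_n]/(y_1^p,\dots,y_n^p)$ --- translate the averaging hypotheses into the ideal memberships $F\in\bigl(\prod_i(1-g^{e_i})\bigr)R$ and $G\in\bigl(\prod_i(1-g^{a_i})\bigr)R$. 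The equivalence with (P'4) then collapses to the remark that $\bigl(\prod_i(1-g^{a_i})\bigr)^{*}$ is a unit times $\prod_i(1-g^{a_i})$, so $\varepsilon(FG^{*})=\varepsilon(u\,z\,rs^{*})$ with $u$ a unit, $z$ the element of (P'4), and $rs^{*}$ ranging over all of $R$. Your route is shorter and purely ideal-theoretic; the paper's route sets up the $\delta_i$ formalism and the $\Coeff$-calculus that it reuses, e.g.\ in Lemma~\ref{duality}.

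Two minor points. Your final display drops the unit: with $r=g^{-v_0}$, $s=1$ one gets $\varepsilon(FG^{*})=\varepsilon(u\,z\,g^{-v_0})$ with $u=(-1)^n g^{-\sum_i a_i}$, which up to sign is the coefficient of $g^{v_0+\sum_i a_i}$ in $z$ rather than that of $g^{v_0}$; harmless since $v_0$ is yours to choose, but the bookkeeping should be cleaned up. Also, a subtlety you resolve silently but correctly: the displayed hypothesis $\sum_{v:\,v_i=x}f(v)=0$ literally denotes a hyperplane sum, whereas your criterion $(1-g^{e_i})^{p-1}F=0$ encodes the line sums $\sum_{t\in\mathbb{F}_p}f(v_0+te_i)=0$, and for $n\ge 3$ these differ (the hyperplane condition is strictly weaker). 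The lemma inside the paper's own proof is stated and proved with line sums, which is what the argument actually needs, so your interpretation is the intended one --- but it is worth saying so explicitly rather than leaving the reader to reconcile the two.
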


\begin{proof}

Note that every function $f \in C_p(\mathbb{F}_p^n)$ can be written uniquely in the form 
$$f = \sum_{0 \leq b_1 \leq p-1, \dots, 0 \leq b_n \leq p-1} c_{b_1, \dots, b_n} \cdot \prod_{1 \leq i \leq n} x_i^{b_i},$$
where $ c_{b_1, \dots, b_n} \in \mathbb{F}_p$.

Let us define the operators $\delta_i : C_p(\mathbb{F}_p^n) \to C_p(\mathbb{F}_p^n)$  by 
$$ \delta_i(f)(v) := f(v) - f(v + e_i)$$
and operators $\delta'_i : C_p(\mathbb{F}_p^n) \to C_p(\mathbb{F}_p^n)$ by 
$$ \delta'_i(f)(v) = f(v) - f(v + a_i).$$
It is easy to see that the operators $\delta_i, \delta'_i\ (i\in [n])$ pairwise commute.

For more about the connections of operators like these and the so-called {\it functional degree} of functions between groups, see \cite{Aichinger}.

Observe that property (P'4), so the condition that
$$\prod_{1 \leq i \leq n} (1 - g^{e_i}) \cdot \prod_{1 \leq i \leq n} (1 - g^{a_i}) \in \mathbb{F}_p[\mathbb{F}_p^n]$$
is zero is equivalent to the statement that
$$ \delta_1 \circ \cdots \circ \delta_n \circ \delta'_1 \circ \cdots \circ \delta'_n(f) = 0,$$ 
for every $f \in C_p(\mathbb{F}_p^n)$.

Indeed, if 
$$ \prod_{1 \leq i \leq n} (1 - g^{e_i}) \cdot \prod_{1 \leq i \leq n} (1 - g^{a_i})  = \sum_{w \in \mathbb{F}_p^n} t_w g^w \in \mathbb{Z}[\mathbb{F}_p^n],$$
then
$$\delta_1 \circ \cdots \circ \delta_n \circ \delta'_1 \circ \cdots \circ \delta'_n(f)(v)  = \sum_{w \in \mathbb{F}_p^n} t_w f(v + w) = 0 \in C_p(\mathbb{F}_p^n).$$ 

This means that each $t_w$ is divisible by $p$ if and only if $\delta_1 \circ \cdots \circ \delta_n \circ \delta'_1 \circ \cdots \circ \delta'_n(f) = 0$, for every $f \in C_p(\mathbb{F}_p^n)$, which proves our claim.

\bigskip

We claim in the following that the image  
$$\imm(\delta_1 \circ \dots \circ \delta_n) \subseteq C_p(\mathbb{F}_p^n) $$
consists of the $(p-1)^n$ functions of the form $$\sum_{0 \leq b_1 \leq p-2, \dots, 0 \leq b_n \leq p-2} c_{b_1, \dots, b_n} \cdot \prod_{1 \leq i \leq n} x_i^{b_i},$$
where $ c_{b_1, \dots, b_n} \in \mathbb{F}_p$.

This follows by observing that if $ 1 \leq b'_1 \leq p-1, \dots, 1 \leq b'_n \leq p-1$, then:
$$\delta_1 \circ \cdots \circ \delta_n(\prod_{1 \leq i \leq n} x_i^{b'_i}) = \prod_{1 \leq i \leq n} b'_i \cdot \prod_{1 \leq i \leq n} x_i^{b'_i-1} + g(x_1, \dots, x_n),$$
where $g(x_1, \dots, x_n)$ is a reduced polynomial and $\deg g < \sum\limits_{1 \leq i \leq n} (b'_i - 1)$.

Notice that the kernel  
$$\kerr(\delta_1 \circ \dots \circ \delta_n)\subseteq C_p(\mathbb{F}_p^n) $$ consists of the functions  of the form 
$$\sum_{0 \leq b_1 \leq p-1, \dots, 0 \leq b_n \leq p-1} c_{b_1, \dots, b_n} \cdot \prod_{1 \leq i \leq n} x_i^{b_i},$$
where if  $ c_{b_1, \dots, b_n} \neq 0$, then one of the numbers $b_1, \dots, b_n$ is equal to $0$.

Indeed, these functions are definitely in the kernel and they form a subspace of dimension $p^n - (p-1)^n$ which is the same as $\dim(C_p(\mathbb{F}_p^n) ) - \dim(\imm(\delta_1 \circ \cdots \circ \delta_n))$, thus  the kernel consists of only these functions.

Notice that for every function $f \in \kerr(\delta_1 \circ \dots \circ \delta_n)$ and every monomial $\prod\limits_{1 \leq i \leq n} x_i^{b_i}$ such that $0 \leq b_i \leq p-2$ we have
$$\Coeff\left( \prod_{1 \leq i \leq n} x_i^{p-1},  \overline{\prod_{1 \leq i \leq n} x_i^{b_i} \cdot f}\right) = 0.$$

In other words, if $f \in  \kerr(\delta_1 \circ \dots \circ \delta_n))$ and $g \in \imm(\delta_1 \circ \dots \circ \delta_n)$, then
$$\Coeff\left( \prod_{1 \leq i \leq n} x_i^{p-1}, f \cdot g\right) = 0,$$
so $\sum\limits_{v \in \mathbb{F}_p^n} f(v) \cdot g(v) = 0$.

Since the matrix $M$ is invertible we get that the image of the map $\delta'_1 \circ \dots \circ \delta'_n$ consists of the functions of the form $$\sum_{0 \leq b_1 \leq p-2, \dots, 0 \leq b_n \leq p-2} c_{b_1, \dots, b_n} \cdot \prod_{1 \leq i \leq n} \langle a'_i, x\rangle^{b_i},$$
where $ c_{b_1, \dots, b_n} \in \mathbb{F}_p$.

As $\imm(\delta'_1 \circ \dots \circ \delta'_n) \subseteq \kerr(\delta_1 \circ \dots \circ \delta_n)$ we get that if $f  \in  \imm(\delta'_1 \circ \dots \circ \delta'_n)$ and $g \in \imm(\delta_1 \circ \dots \circ \delta_n)$, then $$\sum_{v \in \mathbb{F}_p^n} f(v) \cdot g(v) = 0.$$

The following lemma gives another description of the functions in $\imm(\delta_1 \circ \dots \circ \delta_n) \subseteq C_p(\mathbb{F}_p^n)$ which will prove our proposition completely:

\begin{lemma}
A function $f \in C_p(\mathbb{F}_p^n)$ is in $\imm(\delta_1 \circ \dots \circ \delta_n)$ if and only if for each index $i \in [n]$ and for every vector $v \in \mathbb{F}_p^n$ one has $\sum\limits_{0 \leq t \leq p-1} f(v + t \cdot e_i) = 0$.
\end{lemma}
\begin{proof}

If a function $f \in C_p(\mathbb{F}_p^n)$ is in $\imm(\delta_1 \circ \cdots \circ \delta_n)$, then for each index $1 \leq i \leq n$ one has $f \in \imm(\delta_i)$, so there is a function $F \in C_p(\mathbb{F}_p^n)$ such that $f(v) = F(v + e_i) - F(v)$ for each $v \in \mathbb{F}_p^n$ and  we get the telescoping sum
$$\sum_{0 \leq t \leq p-1} f(v + t \cdot e_i) = \sum_{0 \leq t \leq p-1} \left[F(v + (t+1) \cdot e_i) - F(v + t \cdot e_i) \right] = 0.$$

\medskip

For the other direction let us assume that $f \in C_p(\mathbb{F}_p^n)$ and for each index $1 \leq i \leq n$ and for every vector $v \in \mathbb{F}_p^n$ one has $\sum\limits_{0 \leq t \leq p-1} f(v + t \cdot e_i) = 0$.

Let us write $f \in C_p(\mathbb{F}_p^n)$ in the reduced form 
$$\sum_{0 \leq b_1 \leq p-1, \dots, 0 \leq b_n \leq p-1} c_{b_1, \dots, b_n} \cdot \prod_{1 \leq i \leq n} x_i^{b_i},$$ where $ c_{b_1, \cdots, b_n} \in \mathbb{F}_p$.

Let us pick an index $1 \leq i \leq n$ and  write 
$$f = \sum_{0 \leq j \leq p-1} x_i^j \cdot g_j(x_1, \dots, x_{i-1}, x_{i+1}, \dots, x_n),$$
where the $g_j$ are reduced polynomials in the variables $x_1, \dots, x_{i-1}, x_{i+1}, \dots, x_n$.

If $g_{p-1}(x_1, \dots, x_{i-1}, x_{i+1}, \dots, x_n) \neq 0$, then there are values $z_1, \dots, z_{i-1}, z_{i+1}, \dots, z_n \in \mathbb{F}_p$ such that $g_{p-1}(z_1, \dots, z_{i-1}, z_{i+1}, \dots, z_n) \neq 0$.

Notice however that 
$$\sum_{0 \leq j \leq p-1} f(z_1 , \dots, z_{i-1}, j, z_{i+1}, \dots, z_n) = g_{p-1}(z_1, \dots, z_{i-1}, z_{i+1}, \dots, z_n) \neq 0,$$
which is impossible by the assumption.

Hence, for every index $1 \leq i \leq n$ we have $g_{p-1}(x_1, \dots, x_{i-1}, x_{i+1}, \dots, x_n) = 0$ which means that $f$ is of the form 
$$\sum_{0 \leq b_1 \leq p-2, \dots, 0 \leq b_n \leq p-2} c_{b_1, \dots, b_n} \cdot \prod_{1 \leq i \leq n} x_i^{b_i},$$
where $ c_{b_1, \dots, b_n} \in \mathbb{F}_p$.

However, by our earlier observations this indeed means that $f \in \imm(\delta_1 \circ \dots \circ \delta_n)$.
\end{proof}

Since the matrix $M$ is nonsingular, we also get that a function $g \in C_p(\mathbb{F}_p^n)$ is in $\imm(\delta'_1 \circ \dots \circ \delta'_n)$ if and only if for each index $1 \leq i \leq n$ and for every vector $v \in \mathbb{F}_p^n$ one has $\sum\limits_{0 \leq t \leq p-1} f(v + t \cdot a_i) = 0$.

This completes the proof of the proposition.
\end{proof}

\begin{remark}
In the interpretation of property (P'4) in Proposition~\ref{equivalent4} the function space $C_p( \mathbb{F}_p^n)$ can be replaced by the space of $A$-valued functions on $\mathbb{F}_p^n$, $C_A(\mathbb{F}_p^n)$, where $A$ is an arbitrary $\mathbb{F}_p$ algebra.
\end{remark}

\subsection{Another equivalent characterisation of mod $p$ group ring properties}

In this subsection we investigate the mod $p$ group ring property (P4), that is,
$$\prod\limits_{1 \leq i \leq n} (1 - g^{e_i})^{t_i} \cdot \prod\limits_{1 \leq i \leq n} (1 - g^{a_i})^{t'_i} = 0 \in \mathbb{F}_p[\mathbb{F}_p^n],$$
where
$0 \leq t_i \leq p-1,\ 0 \leq t'_i \leq p-1\ (i\in[ n])$.

We have proved equivalent characterizations of this property in the special case $t_i = t'_i = 1\ (i\in [n])$, but 
we will need the following simple lemma in the general case which will be very useful in the proof of our main theorem:

\begin{lemma}\label{testmodp}
For $i \in [n]$ let  $0 \leq t_i \leq p-1,\ 0 \leq t'_i \leq p-1$ be integers and let $\lambda_1, \dots, \lambda_n, \mu_1, \dots, \mu_n \in \mathbb{F}_p^*$ be residues.
Let us consider the following property: For all integers $0 \leq k_i \leq t_i,\ 0 \leq k'_i \leq t'_i$, if we have the function $f: \mathbb{F}_p^n \to \mathbb{F}_p$ given by the formula $$f\left(\sum\limits_{1 \leq i \leq n} (\ell_i - k_i)  \lambda_i e_i\right) =   
(-1)^{\sum_{1 \leq i \leq n} \ell_i} \cdot \prod_{1 \leq i \leq n} { \binom{t_i}  {\ell_i}},$$
where $0 \leq \ell_i \leq t_i\ (i \in [n])$
and $f$ takes values $0$ otherwise and similarly $g: \mathbb{F}_p^n \to \mathbb{F}_p$ is given by the formula 
$$g\left(\sum\limits_{1 \leq i \leq n} (k'_i - \ell'_i)  \mu_i a_i\right) =   
(-1)^{\sum_{1 \leq i \leq n} \ell'_i} \cdot \prod_{1 \leq i \leq n} { \binom{t'_i}{ \ell'_i}},$$
where $0 \leq \ell'_i \leq t'_i\ (i \in [n])$, then we have $\langle f, g \rangle = 0$.

This property is equivalent to  property (P4), that is, to 
$$ \prod\limits_{1 \leq i \leq n} (1 - g^{e_i})^{t_i} \cdot \prod\limits_{1 \leq i \leq n} (1 - g^{a_i})^{t'_i} = 0 \in \mathbb{F}_p[\mathbb{F}_p^n].$$
\end{lemma}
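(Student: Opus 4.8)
The plan is to translate the pairing $\langle f,g\rangle$ into group-ring language and then establish the two halves of the equivalence separately. Write $P:=\prod_{1\le i\le n}(1-g^{\lambda_i e_i})^{t_i}\cdot\prod_{1\le i\le n}(1-g^{\mu_i a_i})^{t'_i}=\sum_{w\in\mathbb{F}_p^n}c_w g^w\in\mathbb{F}_p[\mathbb{F}_p^n]$. First I would record that $P=0$ is equivalent to property (P4): taking $\lambda_i$ to be its representative in $\{1,\dots,p-1\}$ one has the telescoping identity $1-g^{\lambda_i e_i}=(1-g^{e_i})\,(1+g^{e_i}+\dots+g^{(\lambda_i-1)e_i})$, and the second factor is a unit of $\mathbb{F}_p[\mathbb{F}_p^n]$ — it lies in the subring $\mathbb{F}_p[\langle g^{e_i}\rangle]\cong\mathbb{F}_p[x]/(x^p-1)=\mathbb{F}_p[x]/((x-1)^p)$, where it is the polynomial $1+x+\dots+x^{\lambda_i-1}$, whose value $\lambda_i$ at $x=1$ is nonzero, hence it is invertible in this local ring. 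Since $M$ is nonsingular, the row vectors $a_i$ also form a basis of $\mathbb{F}_p^n$, so the same argument applies to each $1-g^{\mu_i a_i}$. Multiplying $P$ by the resulting global unit turns it into $\prod_i(1-g^{e_i})^{t_i}\cdot\prod_i(1-g^{a_i})^{t'_i}$, so $P=0$ if and only if (P4) holds.

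Next I would identify $\langle f,g\rangle$ with a single coefficient of $P$. Fix admissible $k_i$ and $k'_i$ and expand the binomial powers. Since $\lambda_i\neq0$ and $0\le\ell_i\le t_i\le p-1$, the exponents $\sum_i(\ell_i-k_i)\lambda_i e_i$ ($0\le\ell_i\le t_i$) are pairwise distinct, and comparing with the definition of $f$ one sees that $f(v)$ is exactly the coefficient of $g^v$ in $T:=g^{-\sum_i k_i\lambda_i e_i}\prod_i(1-g^{\lambda_i e_i})^{t_i}$; similarly $g(v)$ is the coefficient of $g^{-v}$ in $S:=g^{-\sum_i k'_i\mu_i a_i}\prod_i(1-g^{\mu_i a_i})^{t'_i}$. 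Hence $\langle f,g\rangle=\sum_v f(v)g(v)$ equals the coefficient of $g^0$ in the product $TS=g^{-r}P$, that is, $\langle f,g\rangle=c_r$ where $r=r_{k,k'}:=\sum_i k_i\lambda_i e_i+\sum_i k'_i\mu_i a_i$.

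To finish, note that as $(k,k')$ runs over all admissible tuples the vector $r_{k,k'}$ runs over $R:=\{\sum_i a_i\lambda_i e_i+\sum_i b_i\mu_i a_i:\ 0\le a_i\le t_i,\ 0\le b_i\le t'_i\}$; and expanding the defining product of $P$ shows that every monomial occurring there already has exponent in $R$, so $c_w=0$ automatically for every $w\notin R$. Therefore the stated test property — that $\langle f,g\rangle=0$ for all admissible $k,k'$ — says precisely that $c_w=0$ for every $w\in R$, hence for every $w$, i.e. that $P=0$; and by the first step this is equivalent to (P4). I expect the main obstacle to be not conceptual but bookkeeping: matching $f$ and $g$ to the group-ring elements $T$ and $S$ — in particular the asymmetry between the $\ell_i-k_i$ in the definition of $f$ and the $k'_i-\ell'_i$ in that of $g$, which is exactly what makes $\langle f,g\rangle$ a coefficient of the product $P$ rather than of a convolution — together with the support inclusion of $P$ inside $R$, which is what upgrades the test from a necessary to a sufficient condition. (One could instead route the argument through the difference-operator description of (P4) in Proposition~\ref{equivalent4}, but the direct coefficient computation above seems shortest.)
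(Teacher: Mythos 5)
Your proposal is correct and follows essentially the same route as the paper: identify $\langle f,g\rangle$ with the coefficient of $g^{\sum_i k_i\lambda_i e_i+\sum_i k'_i\mu_i a_i}$ in $\prod_i(1-g^{\lambda_i e_i})^{t_i}\prod_i(1-g^{\mu_i a_i})^{t'_i}$, observe that these exponents exhaust the support of that product, and then pass to $\prod_i(1-g^{e_i})^{t_i}\prod_i(1-g^{a_i})^{t'_i}$ via the unit factors $1+g^{e_i}+\dots+g^{(\lambda_i-1)e_i}$. You are somewhat more explicit than the paper about why $f$ and $g$ are well-defined (distinctness of the exponents) and about the support containment that makes the family of coefficient conditions sufficient, but these are details the paper leaves implicit rather than a genuinely different argument.
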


\begin{proof}
For the integers   $0 \leq k_i \leq t_i, 0 \leq k'_i \leq t'_i$ the property of the lemma is equivalent to the fact that the coefficient
of 
$$g^{\sum\limits_{1 \leq i \leq n} k_i \lambda_i e_i + \sum\limits_{1 \leq i \leq n} k'_i \mu_i a_i}$$
is $0$ in 
$$\prod_{1 \leq i \leq n} (1 - g^{\lambda_i e_i})^{t_i} \cdot   \prod_{1 \leq i \leq n} (1 -   g^{\mu_i a_i})^{t'_i} \in \mathbb{F}_p[\mathbb{F}_p^n],$$
so is equivalent to 
$$\prod_{1 \leq i \leq n} (1 - g^{\lambda_i e_i})^{t_i} \cdot   \prod_{1 \leq i \leq n} (1 -   g^{\mu_i a_i})^{t'_i}  = 0  \in \mathbb{F}_p[\mathbb{F}_p^n].$$

Notice on the other hand that 
$$\prod_{1 \leq i \leq n} (1 - g^{\lambda_i e_i})^{t_i} \cdot   \prod_{1 \leq i \leq n} (1 -   g^{\mu_i a_i})^{t'_i}\quad \text{ and } \quad
 \prod_{1 \leq i \leq n} (1 - g^{ e_i})^{t_i} \cdot   \prod_{1 \leq i \leq n} (1 -   g^{a_i})^{t'_i}$$ differs in an invertible element in $\mathbb{F}_p[\mathbb{F}_p^n]$ since 
$$(1 -  g^{\lambda_i e_i}) = (1 -  g^{e_i}) \cdot \sum_{0 \leq j \leq \lambda_i -1} g^{j e_i}\quad
\text{ and } 
\quad (1 - g^{\lambda_i e_i})  \cdot \sum_{0 \leq j \leq \overline{\lambda_i^{-1}} -1}  g^{j \lambda_i e_i} = (1 -  g^{e_i}),$$
where $\overline{\lambda_i^{-1}}$ the unique integer in the interval $[1, p]$ which is congruent to $\lambda_i^{-1}$ mod $p$.

\medskip

Hence,
$$ \prod_{1 \leq i \leq n} (1 - g^{ e_i})^{t_i} \cdot   \prod_{1 \leq i \leq n} (1 -   g^{a_i})^{t'_i}  = 0 \in \mathbb{F}_p[\mathbb{F}_p^n]$$
is equivalent to 
$$\prod_{1 \leq i \leq n} (1 - g^{\lambda_i e_i})^{t_i} \cdot   \prod_{1 \leq i \leq n} (1 -   g^{\mu_i a_i})^{t'_i}  = 0  \in \mathbb{F}_p[\mathbb{F}_p^n].$$
\end{proof}

\subsection{Duality of polynomial coefficients}

In this section our aim is to compare the polynomial descriptions of properties (P'1) and (P'4).

We have seen in the proof of Proposition~\ref{equivalent4} that (P'4), that is, the mod $p$ group ring identity is equivalent to the vanishings of the coefficients 
$$\Coeff\left(\prod_{1 \leq j \leq n} x_j^{t_j}, \overline{\prod_{1 \leq j \leq n} (\sum_{1 \leq i \leq n} a_{i, j} x_i)^{b_j}}\right) = 0$$
for every $0 \leq b_j \leq p-2, \ 1 \leq t_j \leq p-1\ (j\in [n]).$

In fact this statement was written in the coordinate systems $x_i\ (i\in [n])$ and $\langle a'_i, x\rangle\ (i\in [n])$, however if we change the roles of the two coordinate systems we get this form, which will be more convenient in the following.

Equivalently, we may express these vanishing statements in the form that if $0 \leq b_j \leq p-2$ and $1 \leq t_j \leq p-1$, then 
$$\left \langle \prod_{1 \leq j \leq n} \left(\sum_{1 \leq i \leq n} a_{i, j} x_i\right)^{b_j},  \prod_{1 \leq j \leq n} x_j^{p-1 - t_j}\right\rangle = 0,$$
with the standard scalar product $\langle \cdot,\cdot \rangle$ of functions.

Among these conditions let us call {\it top degree conditions} the ones when $\sum\limits_{1 \leq j \leq n}t_j = \sum\limits_{1 \leq i \leq n}b_j$.

Let us prove first the following lemma which shows that we can also express property (P'1) by the vanishing of a bunch of similar polynomial coefficients:

\begin{lemma}\label{Alontpol}
Property \emph{(P'1)} is equivalent to the vanishings
$$\left \langle \prod_{1 \leq i \leq n} \left(\sum_{1 \leq j \leq n} a_{i, j} x_j\right)^{r_i},  \prod_{1 \leq i \leq n} x_i^{s_i}\right  \rangle = 0,$$
where $1 \leq r_i \leq p-1$ and $1 \leq s_i \leq p-1$.

In particular, under property \emph{(P'1)}, if $1 \leq r_i \leq p-1$, $0 \leq s'_i \leq p-2$ and $\sum\limits_{1 \leq i \leq n}r_i = \sum\limits_{1 \leq i \leq n}s'_i$, then
$$\Coeff\left(\prod_{1 \leq i \leq n} x_i^{s'_i}, \overline{\prod_{1 \leq i \leq n} (\sum_{1 \leq j \leq n} a_{i, j} x_j)^{r_i}}\right) = 0.$$
\end{lemma}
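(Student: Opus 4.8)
The plan is to rewrite property (P'1) --- the non-existence of a vector $x$ with $x_i\ne 0$ and $(Mx)_i\ne 0$ for all $i$ --- in terms of the standard scalar product of functions on $\mathbb{F}_p^n$, and then recognize the indexed family of scalar products in the statement as exactly encoding that non-existence. By Corollary~\ref{properties2}, (P'1) is equivalent to (P'2), i.e. the polynomial $h(x)=\prod_i(\sum_j a_{i,j}x_j)\cdot\prod_i x_i$ vanishes identically as a function on $\mathbb{F}_p^n$. I would first observe the elementary fact that for $z\in\mathbb{F}_p$ one has $z\ne 0$ iff $z^{p-1}=1$, and that $\sum_{z\in\mathbb{F}_p} z^{r}$ equals $-1$ when $p-1\mid r$, $r\ge 1$, and $0$ otherwise. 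Hence for fixed residues $r_i,s_i$, the sum $\sum_{x\in\mathbb{F}_p^n}\prod_i(\sum_j a_{i,j}x_j)^{r_i}\cdot\prod_i x_i^{s_i}$ is, up to a sign $(-1)^n$, a count (weighted by the relevant indicator-type expressions when all exponents are multiples of $p-1$) of vectors $x$ for which none of the $2n$ linear forms $x_i$, $(Mx)_i$ vanish.

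More precisely: the function $\prod_i x_i^{p-1}$ is the indicator of $\{x: x_i\ne 0\ \forall i\}$, and $\prod_i(\sum_j a_{i,j}x_j)^{p-1}$ is the indicator of $\{x:(Mx)_i\ne 0\ \forall i\}$; so $\langle \prod_i(\sum_j a_{i,j}x_j)^{p-1},\ \prod_i x_i^{p-1}\rangle$ counts exactly the vectors $x$ witnessing the negation of (P'1), and this equals $0$ precisely when (P'1) holds. That handles the single equation with all $r_i=s_i=p-1$. To get the whole family, I would argue both directions. For the forward direction, suppose (P'1) holds; then $h$ vanishes as a function, and since each factor $(\sum_j a_{i,j}x_j)^{r_i}$ is divisible (as a function, via the reduction $\overline{\cdot}$) by $(\sum_j a_{i,j}x_j)^{p-1}$ whenever $1\le r_i\le p-1$ --- because for $t\ne0$, $t^{r}$ with $r\ge 1$ reduces to a power that, combined over the product, still forces the linear form to be nonzero --- actually the cleanest route is: $\prod_i(\sum_j a_{i,j}x_j)^{r_i}\prod_i x_i^{s_i}$, evaluated at any $x$, is zero unless all $x_i\ne 0$ and all $(Mx)_i\ne 0$; but there are no such $x$ under (P'1), so the product is the zero function, hence its scalar product with $1$ (equivalently the stated sum) is $0$. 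For the converse, the single instance $r_i=s_i=p-1$ already gives that the count of bad vectors is $\equiv 0\pmod p$; to upgrade this to "no bad vector at all" one uses that the count is a nonnegative integer, but that is not enough over $\mathbb{F}_p$ --- instead I would note that (P'1) is equivalent to (P'4) being implied, or rather, directly: if (P'1) fails, pick a witness $v$ with all $v_i\ne 0$, all $(Mv)_i\ne0$; then the functions $f(w)=\omega^{\langle w,v\rangle}$-type argument, or more simply, consider that $\prod_i x_i^{p-1}\prod_i(\sum_j a_{i,j}x_j)^{p-1}$ takes value $1$ at $v$ and $0$ elsewhere among... no. The genuinely correct converse: if (P'1) fails there is $v$ with the product $\prod_i v_i\cdot\prod_i(Mv)_i\ne 0$, and then $h(v)\ne0$, contradicting (P'2); so it suffices to observe the stated vanishings imply (P'2), which they do taking $r_i=s_i=p-1$ since then the scalar product equals $\langle h^{\,p-1/\deg}\ldots\rangle$ --- here the clean statement is that $h$ vanishes identically iff $h^{p-1}$ does iff the function $\prod_i x_i^{p-1}\prod_i(\sum_j a_{i,j}x_j)^{p-1}$ is zero iff its sum over $\mathbb{F}_p^n$ is zero (a single $0/1$-valued... no, $\{0,1\}$-valued function sums to $0$ iff it is zero). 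This last equivalence is the crux.

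The final "in particular" clause then follows by the same dictionary between scalar products and coefficients used just before the lemma: $\langle \prod_i(\sum_j a_{i,j}x_j)^{r_i},\ \prod_i x_i^{s_i}\rangle = 0$ translates, via $s_i = p-1-s'_i$ and the fact that $\langle \cdot,\cdot\rangle$ picks out the $\prod_i x_i^{p-1}$-coefficient of a product, into $\Coeff(\prod_i x_i^{s'_i},\overline{\prod_i(\sum_j a_{i,j}x_j)^{r_i}})=0$; and the degree bookkeeping $\sum_i r_i=\sum_i s'_i$ is exactly the condition under which the relevant monomial $\prod_i x_i^{s'_i}$ can appear, with the reduction $\overline{\cdot}$ not yet having "wrapped around" so that the scalar-product vanishing is equivalent to the coefficient vanishing. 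The main obstacle I anticipate is getting the converse direction of the equivalence stated cleanly: one must resist the temptation to argue mod $p$ (which would only give divisibility of a count) and instead pin down that the single top-degree instance $r_i=s_i=p-1$ already encodes (P'2) as the vanishing of an honest indicator function, from which (P'1) follows by Corollary~\ref{properties2}; the forward direction and the "in particular" clause are then routine.
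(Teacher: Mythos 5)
Your forward direction and the ``in particular'' clause are fine and agree with the paper. The problem is the converse, and you flag the difficulty yourself without actually escaping it. You ultimately rest the converse on taking $r_i=s_i=p-1$ and the claim that ``a $\{0,1\}$-valued function sums to $0$ iff it is zero.'' That is false over $\mathbb{F}_p$: the scalar product $\langle\cdot,\cdot\rangle$ lands in $\mathbb{F}_p$, so the vanishing of $\sum_{x}\prod_i x_i^{p-1}\prod_i\bigl(\sum_j a_{i,j}x_j\bigr)^{p-1}$ only says the number of vectors with all $x_i\ne 0$ and all $(Mx)_i\ne 0$ is divisible by $p$ --- exactly the objection you raise two sentences earlier (``not enough over $\mathbb{F}_p$'') and then abandon. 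A single instance with all $r_i$ equal and all $s_i$ equal cannot isolate a witness, and the overclaim ``this equals $0$ precisely when (P'1) holds'' earlier in your proposal has the same defect.

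The paper's converse uses the full family of $(r_i,s_i)$ in an asymmetric way that you never quite reach. Assuming (P'1) fails, fix a witness $v$ and take $f(x)=\prod_{i}\prod_{r\ne v_i}(x_i-r)$, which is supported only at $v$ and whose monomial expansion involves only $\prod_i x_i^{s_i}$ with $1\le s_i\le p-1$ (the constant term of each factor vanishes because $0$ appears among the $r\ne v_i$); and take $g(x)=\prod_i\bigl(\sum_j a_{i,j}x_j\bigr)$, i.e.\ all $r_i=1$. Then $\langle f,g\rangle = f(v)\,g(v)=(-1)^n\prod_i(Mv)_i\ne 0$, contradicting the hypothesized vanishings by linearity in $f$. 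The trick is to put the $(p-1)$-style powers only on the $x_i$ side to build a point mass at $v$, while keeping degree $1$ on the linear-form side so the evaluation at $v$ is visibly nonzero; using $(p-1)$-powers on both sides, as you attempt, smears the information across all bad vectors and only yields a count mod $p$.
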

\begin{proof}
If property (P'1) holds, then indeed for every vector $x \in \mathbb{F}_p^n$ one of the coordinates $x_i\ (i\in [n])$ or $\sum\limits_{1 \leq j \leq n} a_{i, j} x_j\  (i\in [n])$ vanishes, so 
$$\left\langle \prod_{1 \leq i \leq n} \left(\sum_{1 \leq j \leq n} a_{i, j} x_j\right)^{r_i},  \prod_{1 \leq i \leq n} x_i^{s_i}\right\rangle  = 0$$
holds trivially.

For the other direction let us assume  that 
$$\left \langle \prod_{1 \leq i \leq n} \left(\sum_{1 \leq j \leq n} a_{i, j} x_j\right )^{r_i},  \prod_{1 \leq i \leq n} x_i^{s_i} \right \rangle = 0,$$ 
where $1 \leq r_i \leq p-1$ and $1 \leq s_i \leq p-1$.

For the sake of contradiction let us assume  that property (P'1) does not hold, then there is a vector $v \in \mathbb{F}_p^n$ such that
$v_i \neq 0\ (i\in [n])$ and $\sum\limits_{1 \leq j \leq n} a_{i, j} v_j \neq 0\ (i\in [n]$).

Let us consider the functions $f(x_1, \dots, x_n): = \prod\limits_{1 \leq i \leq n} \prod\limits_{r \neq v_i}(x_i - r)$
and $g(x_1, \dots, x_n) := \prod\limits_{1 \leq i \leq n} (\sum\limits_{1 \leq j \leq n} a_{i, j} x_j)$.

One can see that 
\begin{itemize}
    \item $f(x_1, \dots, x_n)$ is the linear combination of monomials of the form $\prod\limits_{1 \leq i \leq n} x_i^{s_i}$ where $1 \leq s_i \leq p-1$ and
    \item  $g(x_1, \dots, x_n)$ is a function
of the form $\prod\limits_{1 \leq i \leq n} (\sum\limits_{1 \leq j \leq n} a_{i, j} x_j)^{r_i}$ (where in fact  $r_i = 1$ for every $i$).
\end{itemize}

On the other hand $f(x) = 1$ if $x = v$ and $f(x) = 0$ otherwise and $g(v) \neq 0$, so we get $\langle f, g\rangle  \neq 0$ which is a contradiction.
\bigskip

Assume next that $1 \leq r_i \leq p-1$,  $0 \leq s'_i \leq p-2$ and $\sum\limits_{1 \leq i \leq n}r_i = \sum\limits_{1 \leq j \leq n}s'_i$.

\noindent
In this case it is easy to see that we have 
$$\Coeff\left (\prod_{1 \leq i \leq n} x_i^{s'_i}, \overline{\prod_{1 \leq i \leq n} \left(\sum_{1 \leq j \leq n} a_{i, j} x_j \right)^{r_i}}\right) = \left\langle\prod_{1 \leq i \leq n} \left(\sum_{1 \leq j \leq n} a_{i, j} x_j \right)^{r_i},  \prod_{1 \leq i \leq n} x_i^{p-1- s'_i}\right\rangle,$$ 
which proves the lemma.
\end{proof}

Again, among the conditions of the previous lemma let us call {\it top degree conditions} the ones that 
$$\Coeff\left (\prod_{1 \leq i \leq n} x_i^{s'_i}, \overline{\prod_{1 \leq i \leq n} \left(\sum_{1 \leq j \leq n} a_{i, j} x_j \right)^{r_i}}\right) = 0$$
if $1 \leq r_i \leq p-1$,  $0 \leq s'_i \leq p-2$ and $\sum\limits_{1 \leq i \leq n}r_i = \sum\limits_{1 \leq i \leq n}s'_i$.

Next, we show these top degree conditions already follow from the top degree conditions associated to property (P'4).

To see this, we prove a duality lemma between polynomial coefficients:

\begin{lemma}\label{duality}
For $i \in [n]$ let $0\leq r_i\leq p-1$ and $0\leq s_i\leq p-1$ be integers such that $\sum\limits_{1 \leq i \leq n}s_i = \sum\limits_{1 \leq i \leq n}r_i$.

Then we have 
$$\Coeff\left(\prod_{1 \leq i \leq n} x_i^{s_i}, \overline{\prod_{1 \leq i \leq n} \left(\sum_{1 \leq j \leq n} a_{i, j} x_j \right)^{r_i}}\right) = 0$$
if and only if 
$$\Coeff\left(\prod_{1 \leq j \leq n} x_j^{r_j}, \overline{\prod_{1 \leq j \leq n} \left(\sum_{1 \leq i \leq n} a_{i, j} x_i \right)^{s_j}}\right) = 0.$$
\end{lemma}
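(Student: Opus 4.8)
The plan is to reduce the statement to the corresponding identity for the \emph{unreduced} coefficients, and then to recognise both of those as one and the same sum over nonnegative integer matrices with prescribed row and column sums, up to an explicit scalar that is a unit in $\mathbb{F}_p$.

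First I would check that, because $\sum_{1\le i\le n}s_i=\sum_{1\le i\le n}r_i$, passing to the reduced form does not change the coefficient in question. The polynomial $P:=\prod_{1\le i\le n}\left(\sum_{1\le j\le n}a_{i,j}x_j\right)^{r_i}$ is homogeneous of total degree $\sum_i r_i$, while $\prod_i x_i^{s_i}$ has total degree $\sum_i s_i=\sum_i r_i$. If a monomial $\prod_i x_i^{m_i}$ occurring in $P$ reduces to $\prod_i x_i^{s_i}$, then for each $i$ the one-variable identification sends $x_i^{m_i}$ to $x_i^{s_i}$; by the definition of the reduced form this forces $m_i\ge s_i$ for every $i$ (if $m_i\le p-1$ then $m_i=s_i$; if $m_i\ge p$ then $m_i>p-1\ge s_i$; and $m_i=0$ forces $s_i=0$). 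Summing over $i$, $\sum_i m_i\ge\sum_i s_i=\sum_i r_i=\sum_i m_i$, hence $m_i=s_i$ for all $i$. Thus the only monomial of $P$ contributing to the coefficient of $\prod_i x_i^{s_i}$ in $\overline P$ is $\prod_i x_i^{s_i}$ itself, so
\[
\Coeff\left(\prod_{1\le i\le n}x_i^{s_i},\ \overline{\prod_{1\le i\le n}\Bigl(\sum_{1\le j\le n}a_{i,j}x_j\Bigr)^{r_i}}\right)=\Coeff\left(\prod_{1\le i\le n}x_i^{s_i},\ \prod_{1\le i\le n}\Bigl(\sum_{1\le j\le n}a_{i,j}x_j\Bigr)^{r_i}\right),
\]
and, applying the same reasoning to the transpose of $M$ with the roles of $r$ and $s$ exchanged,
\[
\Coeff\left(\prod_{1\le j\le n}x_j^{r_j},\ \overline{\prod_{1\le j\le n}\Bigl(\sum_{1\le i\le n}a_{i,j}x_i\Bigr)^{s_j}}\right)=\Coeff\left(\prod_{1\le j\le n}x_j^{r_j},\ \prod_{1\le j\le n}\Bigl(\sum_{1\le i\le n}a_{i,j}x_i\Bigr)^{s_j}\right).
\]

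Next I would expand both unreduced coefficients by the multinomial theorem. Let $\mathcal{C}$ be the set of $n\times n$ matrices $C=(c_{i,j})$ with entries in $\mathbb{Z}_{\ge0}$, row sums $\sum_j c_{i,j}=r_i$ and column sums $\sum_i c_{i,j}=s_j$. Expanding each factor of $\prod_i\left(\sum_j a_{i,j}x_j\right)^{r_i}$ and collecting the coefficient of $\prod_j x_j^{s_j}$ gives
\[
\Coeff\left(\prod_{1\le j\le n}x_j^{s_j},\ \prod_{1\le i\le n}\Bigl(\sum_{1\le j\le n}a_{i,j}x_j\Bigr)^{r_i}\right)=\sum_{C\in\mathcal{C}}\ \prod_{i}\frac{r_i!}{\prod_{j}c_{i,j}!}\ \prod_{i,j}a_{i,j}^{c_{i,j}}=\Bigl(\prod_{i}r_i!\Bigr)\sum_{C\in\mathcal{C}}\frac{\prod_{i,j}a_{i,j}^{c_{i,j}}}{\prod_{i,j}c_{i,j}!},
\]
and, symmetrically, expanding $\prod_j\left(\sum_i a_{i,j}x_i\right)^{s_j}$ and collecting the coefficient of $\prod_i x_i^{r_i}$ gives
\[
\Coeff\left(\prod_{1\le i\le n}x_i^{r_i},\ \prod_{1\le j\le n}\Bigl(\sum_{1\le i\le n}a_{i,j}x_i\Bigr)^{s_j}\right)=\sum_{C\in\mathcal{C}}\ \prod_{j}\frac{s_j!}{\prod_{i}c_{i,j}!}\ \prod_{i,j}a_{i,j}^{c_{i,j}}=\Bigl(\prod_{j}s_j!\Bigr)\sum_{C\in\mathcal{C}}\frac{\prod_{i,j}a_{i,j}^{c_{i,j}}}{\prod_{i,j}c_{i,j}!}.
\]
Here each entry satisfies $c_{i,j}\le r_i\le p-1$, so every $c_{i,j}!$ is invertible in $\mathbb{F}_p$ and the two inner sums are the same well-defined element $S\in\mathbb{F}_p$.

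Finally, since $0\le r_i\le p-1$ and $0\le s_j\le p-1$, each $r_i!$ and $s_j!$ is a product of integers from $\{1,\dots,p-1\}$ and hence a unit in $\mathbb{F}_p$; by the two expansions of the previous step the unreduced coefficients equal $\bigl(\prod_i r_i!\bigr)S$ and $\bigl(\prod_j s_j!\bigr)S$ respectively, so both vanish if and only if $S=0$. Combined with the two identities of the first step, this is precisely the assertion of the lemma. The only delicate point is that first step — confirming that the $\overline{\,\cdot\,}$-truncation introduces no spurious contributions — and it rests on the monotonicity of the one-variable reduction together with the hypothesis $\sum_i r_i=\sum_i s_i$; the remainder is routine multinomial bookkeeping.
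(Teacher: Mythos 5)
Your proof is correct, and it arrives at the same underlying identity as the paper does but packages it differently. The paper's proof introduces the auxiliary $2n$-variable polynomial $P(x,y)=\bigl(\sum_{i,j}a_{i,j}x_jy_i\bigr)^Q$ with $Q=\sum_i r_i$, and observes that each of the two target coefficients equals $\Coeff\bigl(\prod x_i^{s_i}\prod y_i^{r_i},P\bigr)$ times $\prod_i r_i!/Q!$ (respectively $\prod_i s_i!/Q!$), from which the unit-factorial argument closes the proof. You instead expand directly by the multinomial theorem into a sum over nonnegative integer matrices $C$ with row sums $r_i$ and column sums $s_j$, identify the common quantity $S=\sum_{C}\prod a_{i,j}^{c_{i,j}}/\prod c_{i,j}!$, and show the two coefficients are $\bigl(\prod_i r_i!\bigr)S$ and $\bigl(\prod_j s_j!\bigr)S$. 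These are the same quantity in disguise (the paper's $\Coeff(\cdot,P)/Q!$ is exactly your $S$), but your version has two advantages worth noting: it sidesteps the rational expression $\prod r_i!/Q!$ whose denominator $Q!$ can vanish in $\mathbb{F}_p$ (so the paper's manipulation must implicitly be read over $\mathbb{Z}$ before reducing), since all your factorials $c_{i,j}!$ have arguments $\le p-1$; and it makes explicit the step, which the paper elides, that under the hypothesis $\sum r_i=\sum s_i$ the passage to the reduced form $\overline{\,\cdot\,}$ contributes no extra monomials to the coefficient in question. Both points are genuine tightenings of the argument as written.
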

\begin{proof}
Let us write $Q := \sum\limits_{1 \leq i \leq n}s_i = \sum\limits_{1 \leq i \leq n}r_i$ and  consider the polynomial
$$P(x_1, \dots, x_n, y_1, \dots, y_n) = \left(\sum_{\substack{1 \leq i \leq n,\\ 1 \leq j \leq n}} a_{i,j}x_j y_i\right)^Q.$$

Observe that
$$\Coeff\left(\prod_{1 \leq i \leq n} x_i^{s_i}, \overline{\prod_{1 \leq i \leq n} \left(\sum_{1 \leq j \leq n} a_{i, j} x_j \right)^{r_i}}\right)  = \Coeff\left(\prod_{1 \leq i \leq n} x_i^{s_i} \cdot
\prod_{1 \leq i \leq n} y_i^{r_i} , P\right) \cdot \frac{\prod_{1 \leq i \leq n} r_i!}{Q!}$$
and similarly,
$$\Coeff\left(\prod_{1 \leq j \leq n} x_j^{r_j}, \overline{\prod_{1 \leq j \leq n} \left(\sum_{1 \leq i \leq n} a_{i, j} x_i \right)^{s_j}}\right) = \Coeff\left(\prod_{1 \leq i \leq n} x_i^{s_i} \cdot
\prod_{1 \leq i \leq n} y_i^{r_i} , P\right) \cdot \frac{\prod_{1 \leq i \leq n} s_i!}{Q!}.$$

Hence,
\begin{multline*}
\prod_{1 \leq i \leq n} r_i!\cdot \Coeff\left(\prod_{1 \leq j \leq n} x_j^{r_j}, \overline{\prod_{1 \leq j \leq n} \left(\sum_{1 \leq i \leq n} a_{i, j} x_i \right)^{s_j}}\right)   =\\ \prod_{1 \leq i \leq n} s_i!\cdot \Coeff\left(\prod_{1 \leq i \leq n} x_i^{s_i}, \overline{\prod_{1 \leq i \leq n} \left(\sum_{1 \leq j \leq n} a_{i, j} x_j \right)^{r_i}}\right).
\end{multline*}

Since $0 \leq r_i,s_i \leq p-1$ for every $i\in [n]$  this proves the lemma.
\end{proof}

\begin{corollary}
From property \emph{(P'4)} it follows that if $1 \leq r_i \leq p-1$ and $1 \leq s_i \leq p-1$, then 
$$\left\langle \prod_{1 \leq i \leq n} \left(\sum_{1 \leq j \leq n} a_{i, j} x_j\right)^{r_i},  \prod_{1 \leq i \leq n} x_i^{s_i}\right\rangle = 0,$$
if $\sum\limits_{1 \leq i \leq n} (r_i + s_i) \leq (p-1)n$.

In particular, if $\sum\limits_{1 \leq i \leq n} (r_i + s_i) = (p-1)n$, then these conditions give back the top degree conditions of Lemma~\ref{Alontpol}.
\end{corollary}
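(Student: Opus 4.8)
The plan is to read the inner product as a power-sum over $\mathbb{F}_p^n$ and then feed it into the chain (P4) $\Rightarrow$ (P5) of Proposition~\ref{properties}; this avoids tracking the reduction operation by hand.

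First I would introduce $f(x_1,\dots,x_n):=\prod_{1\le i\le n}\bigl(\sum_{1\le j\le n}a_{i,j}x_j\bigr)^{r_i}\cdot\prod_{1\le i\le n}x_i^{s_i}$, which is exactly the polynomial featuring in (P5) for the exponents $t_i:=s_i$ and $t'_i:=r_i$, and which is homogeneous of degree $\sum_i(r_i+s_i)$. Next I would observe that property (P'4), i.e.\ $\prod_i(1-g^{e_i})\prod_i(1-g^{a_i})=0$ in $\mathbb{F}_p[\mathbb{F}_p^n]$, forces $\prod_i(1-g^{e_i})^{s_i}\prod_i(1-g^{a_i})^{r_i}=0$ as well: since $r_i,s_i\ge 1$ one just multiplies the vanishing element by $\prod_i(1-g^{e_i})^{s_i-1}\prod_i(1-g^{a_i})^{r_i-1}$. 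As $1\le r_i,s_i\le p-1$, this last identity is precisely property (P4) of Proposition~\ref{properties} with $t_i=s_i$, $t'_i=r_i$, and hence that proposition gives $\deg(\overline f)<\sum_i t_i+\sum_i t'_i=\sum_i(r_i+s_i)\le (p-1)n$.

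With this degree bound the conclusion is immediate. I would write $\bigl\langle\prod_i(\sum_j a_{i,j}x_j)^{r_i},\,\prod_i x_i^{s_i}\bigr\rangle=\sum_{v\in\mathbb{F}_p^n}f(v)=\sum_{v\in\mathbb{F}_p^n}\overline f(v)$, note that since $\deg(\overline f)<(p-1)n$ while $\overline f$ has degree at most $p-1$ in each variable, every monomial of $\overline f$ must have some variable occurring with exponent in $\{0,1,\dots,p-2\}$, and then invoke the elementary fact that $\sum_{t\in\mathbb{F}_p}t^{d}=0$ for $0\le d\le p-2$ to conclude that each monomial of $\overline f$ contributes $0$ to that sum; hence the inner product vanishes. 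For the ``in particular'' assertion I would simply record that when $\sum_i(r_i+s_i)=(p-1)n$ and $s'_i:=p-1-s_i\in\{0,\dots,p-2\}$, one has $\sum_i r_i=\sum_i s'_i$, and, by the identity used in the proof of Lemma~\ref{Alontpol}, the inner product above equals $\Coeff\bigl(\prod_i x_i^{s'_i},\,\overline{\prod_i(\sum_j a_{i,j}x_j)^{r_i}}\bigr)$, which is one of the top-degree conditions of Lemma~\ref{Alontpol}.

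I do not expect a genuine obstacle here: essentially all the content sits in the already-established implication (P4) $\to$ (P5), and the only points needing care are that the exponents do satisfy $r_i,s_i\le p-1$ so that Proposition~\ref{properties} applies verbatim, together with the routine power-sum evaluation over $\mathbb{F}_p$. Should one prefer to stay within the coefficient language of this subsection, an alternative is to transport the coefficient form of (P'4) onto $\prod_i(\sum_j a_{i,j}x_j)^{r_i}$ using the duality Lemma~\ref{duality} and then apply the same degree count to dispose of the non-top-degree cases; but the route through (P5) seems the shortest.
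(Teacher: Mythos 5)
Your proof is correct, and it takes a genuinely different route from the paper's. The paper splits into two cases: when $\sum_i(r_i+s_i) < (p-1)n$ it observes that the inner product vanishes by a bare degree count (no use of (P'4) at all, since the product $\prod_i(\sum_j a_{i,j}x_j)^{r_i}\prod_i x_i^{s_i}$ already has total degree less than $(p-1)n$ before reduction, so the monomial $\prod_i x_i^{p-1}$ cannot survive); and when $\sum_i(r_i+s_i)=(p-1)n$ it translates the inner product to the coefficient $\Coeff\bigl(\prod_i x_i^{p-1-s_i},\,\prod_i(\sum_j a_{i,j}x_j)^{r_i}\bigr)$ and then invokes Lemma~\ref{duality} to convert this to a coefficient whose vanishing is part of the coefficient characterization of (P'4) recorded at the start of that subsection. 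You instead work uniformly: you amplify (P'4) to (P4) for $t_i=s_i$, $t'_i=r_i$ by multiplying the zero element by $\prod_i(1-g^{e_i})^{s_i-1}\prod_i(1-g^{a_i})^{r_i-1}$ (this is valid precisely because $1\le r_i,s_i\le p-1$, which keeps the exponents in the legal range of Proposition~\ref{properties}), feed this into the chain $(\text{P}4)\to(\text{P}5)$ to get $\deg(\overline f)<\sum_i(r_i+s_i)\le(p-1)n$, and then kill the power sum. The two approaches illuminate different things: the paper's version stays entirely in the coefficient/duality language that the subsection is developing and isolates exactly where (P'4) is used (only at the boundary $\sum_i(r_i+s_i)=(p-1)n$, via duality); your version is shorter and self-contained but repackages the work already done in the proof of (P4)$\to$(P5), which is morally the same degree argument as the reduced-form version of the duality lemma. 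Both are valid; yours avoids Lemma~\ref{duality} entirely, which is a genuine simplification, though it is worth noting that the non-top-degree cases are even more trivial than your argument suggests — they need no group-ring input at all, as the paper points out. Your handling of the ``in particular'' clause via $s'_i:=p-1-s_i$ and the identity from Lemma~\ref{Alontpol} is correct.
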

\begin{proof}
Let us assume that property (P'4) holds.

If $\sum\limits_{1 \leq i \leq n} (r_i + s_i) < (p-1)n$, then the statement follows from the fact that 
$$\deg\left( \left(\sum\limits_{1 \leq j \leq n} a_{i, j} x_j \right)^{r_i} \cdot  \prod\limits_{1 \leq i \leq n} x_i^{s_i}\right) < (p-1)n.$$

If $\sum\limits_{1 \leq i \leq n} (r_i + s_i) = (p-1)n$, then 
$$\left\langle \prod_{1 \leq i \leq n} \left(\sum_{1 \leq j \leq n} a_{i, j} x_j\right)^{r_i},  \prod_{1 \leq i \leq n} x_i^{s_i} \right\rangle = 0$$
is equivalent to 
$$\Coeff\left(\prod_{1 \leq i \leq n} x_i^{p-1-s_i}, \prod_{1 \leq i \leq n} \left(\sum_{1 \leq j \leq n} a_{i, j} x_j\right)^{r_i} \right) = 0.$$

However, by Lemma~\ref{duality} we know that this is equivalent to 
$$\Coeff\left(\prod_{1 \leq j \leq n} x_j^{r_j}, \overline{\prod_{1 \leq j \leq n} \left(\sum_{1 \leq i \leq n} a_{i, j} x_i \right)^{p-1 - s_j}}\right) = 0,$$
which we indeed know from property (P'4).
\end{proof}

Although we could not complete this duality for all cases to conclude that the mod $p$ group ring identity is equivalent to the $\mathbb {Z}$ group ring identity, we will use a combinatorial interpretation of this duality phenomenon to prove the Alon-Jaeger-Tarsi conjecture for primes $61<p\ne 79$.

\section{Proof of Theorems A and B}\label{sec-proof}

\noindent
In this section we prove our main theorem:

\begin{theorem}\label{mainth}
(i) Let $p$ a prime and $k$ an integer such that $p > k \geq 1$ and $N_k(p) \cdot S_k(p) < p-1$ hold. Let $M \in M^{n \times n}(\mathbb{F}_p)$ be an arbitrary nonsingular matrix. Then the matrix $M$ does not satisfy property (P4), that is,
$$ \prod_{1 \leq i \leq n} (1 - g^{e_i})^k \cdot \prod_{1 \leq i \leq n} (1 - g^{a_i})^k \neq 0 \in \mathbb{F}_p[\mathbb{F}_p^n].$$

(ii) Let $61 < p \neq 79$ be a prime  and let $c_i, d_i \in \mathbb{F}_p$ be arbitrary residues. Then there exists a vector $x \in \mathbb{F}_p^n$ such that $x_i \neq c_i$ for each $1 \leq i \leq n$ and $(Mx)_i \neq d_i$ for each $1 \leq i \leq n$.

(iii) Assume that $p$ is a prime and $k \geq 2$ is an integer such that $p > k^{K k}$ (where $K$ is the universal constant from Lemma~\ref{Skepsilon}). For each $1\leq i\leq n$ let $c_{i,\ell}\in \mathbb{F}_p$ be $k$ different residues ($\ell \in [k]$) and let $d_{i,\ell}\in \mathbb{F}_p$ be $k$ different residues ($\ell\in [k]$). Then  there exists a vector $x \in \mathbb{F}_p^n$ such that $x_i \neq c_{i, \ell}$ for each $1 \leq i \leq n,\ 1 \leq \ell \leq k$ and $(Mx)_i \neq d_{i, \ell}$ for each $1 \leq i \leq n,\ 1 \leq \ell \leq k$.
\end{theorem}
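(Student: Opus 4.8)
The plan is to prove all three parts via a single induction on $n$, with part (i) being the engine: assuming toward a contradiction that $M$ satisfies (P4) (with exponents $t_i=t_i'=k$), we use Corollary~\ref{goodsubsets} to produce $S_k$-type sets $A_i$ and $N_k$-type sets $B_i$ in $\mathbb{F}_p^*$ so that, writing $\{e_i\}$ for the standard basis and $\{a_i\}$ for the rows of $M$ (both bases of $\mathbb{F}_p^n$, since $M$ is nonsingular), one has $\sum_i x_i e_i \neq \sum_i y_i a_i$ whenever $x_i\in A_i$, $y_i\in B_i$. The point is that the existence of these sets says a DeVos-type covering fails in the dual picture, and this combinatorial failure is exactly what lets us ``descend'' the group ring identity to a smaller matrix. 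Parts (ii) and (iii) then follow: for (ii) we invoke Corollary~\ref{cor-s1} (which gives $S_1(p)^2 < p-1$, hence $N_1(p)\cdot S_1(p)<p-1$ since $N_1=S_1$) together with the chain (P'1)$\leftrightarrow\cdots\leftrightarrow$(P'4) from Proposition~\ref{properties}/Corollary~\ref{properties2}, now in the shifted form with forbidden values $c_i,d_i$; for (iii) we use that $N_k(p)\cdot S_k(p)\le p^{1-1/((2k+1)(2k+2))}<p-1$ when $p>k^{Kk}$, and again translate (P1)$\leftrightarrow$(P4) from Proposition~\ref{properties} with $t_i=t_i'=k$.

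For the inductive step in (i), I would use Lemma~\ref{testmodp} as the bridge between the group ring identity and a statement about functions. The identity (P4) for $M$ is equivalent to $\langle f,g\rangle=0$ for the explicit ``binomial'' functions $f,g$ built from the two bases, for every choice of shift parameters $k_i\le t_i$, $k_i'\le t_i'$ and every choice of nonzero multipliers $\lambda_i,\mu_i$. The idea is to choose the multipliers $\lambda_i,\mu_i$ so that the supports of $f$ and of $g$ are ``spread out'' along the arithmetic progressions guaranteed by the $S_k$- and $N_k$-type sets $A_i,B_i$: because $A_i$ is $S_k$-type, every element of $A_i$ sits at the center of a $(2k+1)$-term progression inside $A_i$, which matches the width-$k$ support of the one-dimensional binomial function $\binom{t_i}{\ell}(-1)^\ell$; similarly the $N_k$ condition handles points outside the set. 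One then uses the disjointness conclusion $\sum x_i e_i\neq\sum y_i a_i$ to conclude that the supports of $f$ and $g$ essentially cannot overlap except in a controlled way, and extracting the vanishing of a single coefficient — the one corresponding to deleting a row and column — yields the mod $p$ group ring identity for the $(n-1)\times(n-1)$ submatrix $M_{ij}$, which is still nonsingular for a suitable choice of $i,j$. That submatrix still satisfies $N_k(p)\cdot S_k(p)<p-1$ (the hypothesis is on $p,k$ only), so the induction hypothesis gives a contradiction; the base case $n=1$ is a direct check that $(1-g^{e_1})^k(1-g^{ca_1})^k\neq 0$ in $\mathbb{F}_p[\mathbb{F}_p]$ for $c\neq 0$, which holds since $p>k$.

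The main obstacle I expect is precisely the descent step: making rigorous the passage from ``(P4) holds for $M$'' plus ``the $A_i,B_i$ disjointness'' to ``(P4) holds for some nonsingular $M_{ij}$.'' The delicate points are (a) choosing which row $i$ and column $j$ to delete, and simultaneously choosing the $2n$ multipliers $\lambda,\mu$, so that the relevant coefficient of the product $\prod(1-g^{\lambda_ie_i})^{t_i}\cdot\prod(1-g^{\mu_ia_i})^{t_i'}$ isolates the subproduct over the surviving indices without interference — this is where the $S_k$/$N_k$ progressions and the non-vanishing of binomial coefficients $\binom{k}{\ell}$ mod $p$ (valid since $p>k$) must be used in tandem; and (b) ensuring the resulting $(n-1)\times(n-1)$ matrix is still nonsingular, which should follow from a generic choice but needs the freedom in the $\lambda,\mu$. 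Everything else — the translation between (P1),(P4),(P5) and group ring identities, and the additive-combinatorial inputs — is already packaged in the earlier lemmas, so the proof should reduce to carefully orchestrating Lemma~\ref{testmodp}, Corollary~\ref{goodsubsets}, and the induction.
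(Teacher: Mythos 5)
Your proposal correctly reconstructs the architecture of the paper's proof: induction on $n$ for part (i), with Corollary~\ref{goodsubsets} supplying the $S_k$/$N_k$-type sets $A_i,B_i$, Lemma~\ref{testmodp} converting (P4) under dilations of the generators into a family of conditions $\langle f,g\rangle = 0$, and parts (ii), (iii) following from (i) together with the (P1)$\leftrightarrow\cdots\to$(P4) chain of Proposition~\ref{properties}. The base case also matches. But the descent step, which you explicitly flag as the ``main obstacle'' without resolving it, is exactly where the one genuinely new idea lives, and your sketch does not contain it. The paper's Lemma~\ref{onemiss} runs an \emph{extremal} argument over the box $S := \{\sum_i y_i e_i : y_i \in A_i\}$: for $v \in S$ write $v = \sum_i z_i a_i$ and set $I_v := \{i : z_i \in B_i\}$; Corollary~\ref{goodsubsets} says $I_v \neq [n]$; pick $u \in S$ with $I_u$ \emph{maximal} and $j \notin I_u$. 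The dilations $\lambda_i,\mu_i$ are then chosen along the progressions from the $S_k$/$N_k$ properties \emph{anchored at this $u$} (in particular $\mu_j$ is chosen via the ``second'' $N_k$ condition applied to $z_{u,j} \notin B_j$). The maximality of $I_u$ is what forces the supports in Lemma~\ref{testmodp} to miss each other when $\ell'_j \neq k$ — any coincidence would yield a $w \in S$ with $I_w \supsetneq I_u$. This is the mechanism that isolates the coefficient and proves $\prod_i(1-g^{e_i})^k \cdot \prod_{i\neq j}(1-g^{a_i})^k = 0$. Without this extremal choice there is no reason the overlap should be ``controlled'' in the way you hope.

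Two structural corrections. First, the descent is a two-step process, not a single ``delete a row and column'' coefficient extraction: Lemma~\ref{onemiss} drops one factor $(1-g^{a_j})^k$ while \emph{keeping all $n$ factors} $(1-g^{e_i})^k$; only afterward does the paper choose $\ell$ with $M_{\ell j}$ nonsingular, write $a_i = a'_i + a_{i,\ell}e_\ell$, and read off the coefficient of $(1-g^{e_\ell})^k$ to land in $\mathbb{F}_p[G']$ with $G' \cong \mathbb{F}_p^{n-1}$. Second, nonsingularity of the resulting submatrix does not need any freedom in $\lambda,\mu$: once $j$ is pinned down by the maximality argument, Laplace expansion along column $j$ of the nonsingular $M$ already produces some $\ell$ with $M_{\ell j}$ nonsingular.
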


\begin{remark}
Theorem 1, parts (ii) and (iii) are equivalent to Theorem A and Theorem B, respectively.
\end{remark}

\begin{proof}

For part (i), notice that by Corollary~\ref{goodsubsets} we can fix subsets $A_i,B_i \subseteq \mathbb{F}_p^*\ (i\in [n])$ such that for every $i\in [n]$ the set $A_i$ is $S_k$-type and the set $B_i$ is $N_k$-type, furthermore, if $y_i \in A_i\  (i\in [n])$, $z_i \in B_i\ (i\in [n])$, then $\sum\limits_{1 \leq i \leq n} y_i e_i \neq \sum\limits_{1 \leq i \leq n} z_i a_i$.

We will prove the statement by induction. Observe that in the case $n=1$ we have $$\prod_{1 \leq i \leq n} (1 - g^{e_i})^k \cdot \prod_{1 \leq i \leq n} (1 - g^{a_i})^k = (1 - g^{e_1})^k \cdot (1 - g^{a_1})^k,$$
which differs from $(1 - g^{e_1})^{2k}$ by an invertible element, and $(1 - g^{e_1})^{2k} \neq 0 \in \mathbb{F}_p[\mathbb{F}_p]$, since
$2k < p$.

Let us assume in the following that $n \geq 2$ and the statement holds for $\ell < n$. 

\noindent
For the sake of contradiction let us assume that $M \in M^{n \times n}(\mathbb{F}_p)$ is a counterexample, that is, the identity 
$$ \prod_{1 \leq i \leq n} (1 - g^{e_i})^k \cdot \prod_{1 \leq i \leq n} (1 - g^{a_i})^k = 0 \in \mathbb{F}_p[\mathbb{F}_p^n]$$
holds.

We will prove that we can delete one of the terms $(1 - g^{a_i})^k$ in such a way that the equation still holds.

\begin{lemma}\label{onemiss}
There exists an index $j \in [n]$ such that  
$$\prod_{1 \leq i \leq n} (1 - g^{e_i})^k \cdot \prod_{1 \leq i \leq n, i \neq j} (1 - g^{a_i})^k = 0 \in \mathbb{F}_p[\mathbb{F}_p^n].$$
\end{lemma}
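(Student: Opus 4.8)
The plan is to translate the hypothesis into the vanishing of a coefficient and then use the combinatorial structure of the sets $A_i, B_i$ from Corollary~\ref{goodsubsets} to localize the obstruction to a single index $j$. First I would use Lemma~\ref{testmodp} (or equivalently the coefficient description from Proposition~\ref{equivalent4}) to reformulate the assumed identity $\prod_i (1-g^{e_i})^k \cdot \prod_i (1-g^{a_i})^k = 0$ as a statement that a certain scalar product $\langle f, g\rangle$ vanishes for all admissible shift data, or equivalently that a coefficient of the form $\Coeff\big(\prod_i x_i^{k}, \overline{\prod_i (\langle a_i, x\rangle)^{k} \cdot (\text{stuff})}\big)$ vanishes. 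The key geometric input is that the $S_k$-type set $A_i$ attached to $e_i$ and the $N_k$-type set $B_i$ attached to $a_i$ have the property that the ``box'' $\sum_i A_i e_i$ and the ``box'' $\sum_i B_i a_i$ are disjoint as subsets of $\mathbb{F}_p^n$; this is exactly the falseness of a DeVos-type statement alluded to in Section~\ref{sec-outline}.

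The main step is to exploit this disjointness together with the arithmetic-progression structure. For each $i$, since $A_i$ is $S_k$-type, every element of $A_i$ sits at the center of a $(2k+1)$-term AP inside $A_i$; since $B_i$ is $N_k$-type, additionally every element outside $B_i$ starts a $(k+1)$-term AP landing in $B_i$. I would build from the identity in $\mathbb{F}_p[\mathbb{F}_p^n]$, by multiplying by suitable invertible elements (geometric-series factors as in Lemma~\ref{testmodp}, which rescale $e_i \mapsto \lambda_i e_i$ and $a_i \mapsto \mu_i a_i$ freely), a new identity whose support is forced by the disjointness to avoid a whole product region. Concretely: expanding $\prod_i (1-g^{a_i})^k$ and pairing against test functions supported on the box $\sum A_i e_i$, the contribution of any term in which \emph{every} factor $(1-g^{a_i})^k$ is ``used nontrivially'' is killed, because the corresponding lattice point lies in $\sum B_i a_i$, which is disjoint from $\sum A_i e_i$. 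Hence in the expansion only terms survive in which at least one factor $(1-g^{a_i})$ contributes its constant term $1$; an averaging / pigeonhole over which factor that is (using that there are only $n$ of them, and the AP structure gives enough independent test functions to separate the indices) then shows that for some fixed $j$ the factor $(1-g^{a_j})^k$ can be entirely deleted while preserving the identity. I expect the precise bookkeeping here — choosing the multipliers $\lambda_i,\mu_i$, setting up the right test functions from the AP's, and making the pigeonhole over $j$ rigorous rather than heuristic — to be the crux of the argument, and the place where the hypothesis $N_k(p)\cdot S_k(p) < p-1$ is genuinely used (it is what makes the two boxes disjoint via Corollary~\ref{randomdisjoint2}).

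Once Lemma~\ref{onemiss} is in hand, the outer induction in part (i) of Theorem~\ref{mainth} closes: after deleting $(1-g^{a_j})^k$ we have $\prod_i (1-g^{e_i})^k \cdot \prod_{i\neq j}(1-g^{a_i})^k = 0$; projecting away the coordinate dual to $a_j$ (equivalently, working in the quotient $\mathbb{F}_p^n / \langle a_j\rangle^\perp$ or restricting to the hyperplane complementary to $e_j$ after a change of basis), the vectors $\{e_i : i\neq j\}$ and $\{a_i : i \neq j\}$ descend to two bases of an $(n-1)$-dimensional space, and one gets a counterexample of size $n-1$, contradicting the inductive hypothesis. So the real content is entirely in Lemma~\ref{onemiss}, and within it the real obstacle is extracting a \emph{single} good index $j$ from the disjointness, rather than merely showing that not all factors can be simultaneously essential.
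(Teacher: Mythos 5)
Your setup is correct as far as it goes: reformulating the hypothesis through Lemma~\ref{testmodp} as a family of vanishing scalar products $\langle f,g\rangle=0$ parametrized by shifts and multipliers, and using the disjointness of the two ``boxes'' $\sum_i A_i e_i$ and $\sum_i B_i a_i$ produced by Corollary~\ref{goodsubsets}, is exactly the paper's frame. You also correctly identify that the AP structure of $A_i$ (type $S_k$) and $B_i$ (type $N_k$) is what lets you construct admissible test functions. However, the step you flag as the crux --- ``an averaging / pigeonhole over which factor that is... shows that for some fixed $j$ the factor can be deleted'' --- is not filled in, and the mechanism you gesture at is not the one that works. There is no averaging in the paper's argument, and I don't see how a pigeonhole over indices would let you delete one \emph{fixed} factor $(1-g^{a_j})^k$ globally, since different test functions might ``avoid'' different indices.

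The actual mechanism is extremal rather than averaging. Define $S=\{\sum_i y_i e_i : y_i\in A_i\}$; for $v\in S$ write $v=\sum_i z_i a_i$ and set $I_v:=\{i: z_i\in B_i\}\subsetneq[n]$ (proper by disjointness). Choose $u\in S$ with $I_u$ \emph{maximal with respect to inclusion}, and pick $j\notin I_u$. Now the two halves of Corollary~\ref{goodsubsets} are used asymmetrically: for each $i$, the $S_k$ property of $A_i$ supplies $\lambda_i$ with $y_{u,i}+\ell\lambda_i\in A_i$ for $-k\le\ell\le k$; for $i\in I_u$ the (stronger) $N_k$ property of $B_i$ supplies $\mu_i$ with $z_{u,i}+\ell\mu_i\in B_i$ for $-k\le\ell\le k$; and crucially, for the chosen index $j$, since $z_{u,j}\notin B_j$, the \emph{second clause} of the $N_k$ definition supplies $\mu_j$ with $z_{u,j}+\ell\mu_j\in B_j$ for all $1\le\ell\le k$ (a one-sided progression starting just outside $B_j$). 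Plugging these $\lambda_i,\mu_i$ into Lemma~\ref{testmodp}, restricting to $k'_j=k$, and shifting by $u$, one checks that any putative overlap between $\operatorname{supp}(f)$ and $\operatorname{supp}(g)$ with $\ell'_j<k$ would produce a vector $w\in S$ with $I_w\supsetneq I_u$, contradicting maximality. Hence only the $\ell'_j=k$ term of the binomial expansion of $(1-g^{\mu_j a_j})^k$ can contribute, and this is precisely the statement that the $j$-th factor can be dropped; Lemma~\ref{testmodp} applied in reverse then gives the group ring identity without the $j$-th factor. So the missing idea is: (a) the extremal choice of $u$ by maximality of $I_u$, and (b) the genuinely one-sided use of the $N_k$ property at the single index $j\notin I_u$ --- your proposal treats $S_k$ and $N_k$ symmetrically and so cannot see why $j$ is forced.

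Your final paragraph on closing the induction is essentially right in spirit; for the record, the paper picks $\ell$ with $M_{\ell,j}$ nonsingular, writes $a_i=a'_i+a_{i,\ell}e_\ell$, expands the deleted-factor identity in powers of $(1-g^{e_\ell})$, and reads off the coefficient of $(1-g^{e_\ell})^k$ to land on the $(n-1)\times(n-1)$ instance.
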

\begin{proof}

Let us define the set $S\subseteq \mathbb{F}_p^n$ as
$$S: = \left\{\sum_{1 \leq i \leq n} y_i \cdot e_i\ \Big|\ y_i \in A_i ,\ 1 \leq i \leq n\right\}.$$

For a vector $v \in S$ let us write $v = \sum\limits_{1 \leq i \leq n} z_i \cdot a_i $ and let $I_v \subseteq [n]$ be the subset such that $x_i \in  B_i$ if and only if $i \in I_v$.

Note that $I_v \neq [n]$ for every vector $v \in S$. Let us take a vector $u \in S$ such that $I_u$ is maximal with respect to inclusion and let us choose an index $1 \leq j \leq n$ such that $j \neq I_u$.

Let us write 
$$u = \sum_{1 \leq i \leq n} z_{u, i }\cdot a_i  = \sum_{1 \leq i \leq n} y_{u, i} \cdot e_i.$$

We know that $y_{u, i }\in A_i$ if $1 \leq i \leq n$, which means, using the fact that the set $A_i$ is $S_k$-type, that there exists an element $\lambda_i \in \mathbb{F}_p^*$ such that $y_{u, i} + \ell \lambda_i \in A_i$ for every $-k \leq \ell \leq k $.

Similarly, if $i \in I_u$, then there exists $\mu_i \in \mathbb{F}_p^*$ such that $z_{u, i} + \ell \mu_i \in B_{i}$ for every $ -k \leq \ell \leq k$.
If $i \notin I_u$ but $i \neq j$ then let us choose an arbitrary $\mu_i \in \mathbb{F}_p^*$.

For $i = j$ we may choose a residue $\mu_j \in \mathbb{F}_p^*$ such that $z_{u, j} + \ell \mu_j \in A_{t_j}$ for $ 1 \leq \ell \leq k$, since the set $B_j$ is $N_k$-type.

We know that 
$$\prod_{1 \leq i \leq n} (1 - g^{e_i})^k \cdot \prod_{1 \leq i \leq n} (1 - g^{a_i})^k = 0 \in \mathbb{F}_p[\mathbb{F}_p^n],$$
so let us apply Lemma~\ref{testmodp} for the case $t_i = t'_i = k\ (i\in [n])$ and $\lambda_1, \dots, \lambda_n, \mu_1, \dots, \mu_n \in \mathbb{F}_p^*$.

We get that for all integers $0 \leq k_i \leq k,\ 0 \leq k'_i \leq k$, if we consider the functions $f: \mathbb{F}_p^n \to \mathbb{F}_p$ given by the formula $$f(\sum\limits_{1 \leq i \leq n} (\ell_i - k_i)  \lambda_i e_i) =   
(-1)^{\sum_{1 \leq i \leq n} \ell_i} \cdot \prod_{1 \leq i \leq n} {\binom{k}{\ell_i}},$$ 
where $0 \leq \ell_i \leq k,\ i\in [n]$ and $f$ takes values $0$ otherwise and similarly $g: \mathbb{F}_p^n \to \mathbb{F}_p$ is given by the formula 
$$g(\sum\limits_{1 \leq i \leq n} (k'_i - \ell'_i)  \mu_i a_i) =   
(-1)^{\sum_{1 \leq i \leq n} \ell'_i} \cdot \prod_{1 \leq i \leq n} {\binom{k}{\ell'_i}},$$
where $0 \leq \ell'_i \leq k,\ i\in [n]$, then we have $\langle f, g \rangle = 0$.

Restricting to the case $k'_j = k$ and shifting the functions $f, g$ by the vector $u$ we get that for all integers $0 \leq k_i \leq k,\ 0 \leq k'_i \leq k$ such that $k'_j = k$, if we take the function 
$f: \mathbb{F}_p^n \to \mathbb{F}_p$ given by the formula 
$$f(u + \sum\limits_{1 \leq i \leq n} (\ell_i - k_i)  \lambda_i e_i) =   
(-1)^{\sum_{1 \leq i \leq n} \ell_i} \cdot \prod_{1 \leq i \leq n} {\binom{k} { \ell_i}},$$ where $0 \leq \ell_i \leq k, i\in [n]$
and $f$ takes values $0$ otherwise and similarly $g: \mathbb{F}_p^n \to \mathbb{F}_p$ is given by the formula $$g(u + \sum\limits_{1 \leq i \leq n} (k'_i - \ell'_i)  \mu_i a_i) =   
(-1)^{\sum_{1 \leq i \leq n} \ell'_i} \cdot \prod_{1 \leq i \leq n} {\binom{k}{ \ell'_i}},$$ 
where $0 \leq \ell'_i \leq k,\ i\in [n]$, then we have $\langle f, g \rangle = 0$.

We claim that if $0 \leq \ell_i,\ell'_i \leq k$ (for $i\in[n]$) such that $\ell'_j \neq k$,
then 
$$u + \sum_{1 \leq i \leq n} (\ell_i - k_i)  \lambda_i e_i \neq u + \sum_{1 \leq i \leq n} (k'_i - \ell'_i)  \mu_i a_i.$$

Indeed, we know that 
$$u + \sum_{1 \leq i \leq n} (\ell_i - k_i)  \lambda_i e_i = \sum_{1 \leq i \leq n} (y_{u, i} + (\ell_i - k_i)  \lambda_i ) e_i $$
and
$y_{u, i} + (\ell_i - k_i)  \lambda_i \in A_i$, since the set $A_i$ is $S_k$-type.

On the other hand 
$$u + \sum_{1 \leq i \leq n} (k'_i - \ell'_i)  \mu_i a_i = \sum_{1 \leq i \leq n} (z_{u, i} + (k'_i - \ell'_i)  \mu_i ) a_i.$$

Now notice that if $i \in I_u$, then the coefficient of $a_i$ in $u + \sum\limits_{1 \leq i \leq n} (k'_i - \ell'_i)  \mu_i a_i $ is in $B_i$ by the construction of the residues $\mu_i$ and if $i = j$,
then $z_{u, j} + (k'_j - \ell'_j)  \mu_i \in B_{j}$, since $-k \leq k'_j - \ell'_j \leq k$.

This indeed means that 
$$u + \sum_{1 \leq i \leq n} (\ell_i - k_i)  \lambda_i e_i = u + \sum_{1 \leq i \leq n} (k'_i - \ell'_i)  \mu_i a_i$$
is not possible, since if they were equal to the same vector $w$, 
then $w \in S$ and $I_w$ would properly contain $I_u$  which contradicts the maximality of $I_u$.

\noindent
Hence, we obtained the following statement:

For all integers $0 \leq k_i,k'_i \leq k\ (i\in [n]\setminus\{j\})$, 
if  the function $f: \mathbb{F}_p^n \to \mathbb{F}_p$ is given by the formula $$f(\sum\limits_{1 \leq i \leq n} (\ell_i - k_i)  \lambda_i e_i) =   
(-1)^{\sum_{1 \leq i \leq n} \ell_i} \cdot \prod_{1 \leq i \leq n} {\binom{k} { \ell_i}},$$
where $0 \leq \ell_i \leq k\ (i\in [n])$ and $f$ takes values $0$ otherwise and similarly $g: \mathbb{F}_p^n \to \mathbb{F}_p$ is given by the formula $$g(\sum\limits_{1 \leq i \leq n, i \neq j} (k'_i - \ell'_i)  \mu_i a_i) =   
(-1)^{\sum_{1 \leq i \leq n} \ell'_i} \cdot \prod_{1 \leq i \leq n} {\binom{k} { \ell'_i}}$$
where $0 \leq \ell'_i \leq k$ (for $i\in [n]\setminus\{j\}$), then we have $\langle f, g \rangle = 0$.

By Lemma~\ref{testmodp} this yields  that 
$$\prod_{1 \leq i \leq n} (1 - g^{e_i})^k \cdot \prod_{\substack{1 \leq i \leq n,\\ i \neq j}} (1 - g^{a_i})^k = 0 \in \mathbb{F}_p[\mathbb{F}_p^n]$$
which proves our lemma completely.
\end{proof}

Now, we apply Lemma~\ref{onemiss} to finish the proof of our proposition.

\noindent
By Lemma~\ref{onemiss} we know that there is an index $1 \leq j \leq n$ such that  $$\prod_{1 \leq i \leq n} (1 - g^{e_i})^k \cdot \prod_{1 \leq i \leq n, i \neq j} (1 - g^{a_i})^k = 0 \in \mathbb{F}_p[\mathbb{F}_p^n].$$

We know that the matrix $M$ is invertible so we get that there is an index $1 \leq \ell \leq n$ such that the submatrix $M_{\ell, j}$ is invertible.

Let us write $a_i = a'_i + a_{i, \ell} e_{\ell}$, then let the subgroup $G$ be defined as 
$$G:=\langle a'_1, \dots, a'_{j-1}, a'_{j+1}, \dots, a'_n \rangle = \langle e_1, \dots, e_{\ell-1}, e_{\ell+1}, \dots, e_n \rangle  \cong \mathbb{F}_p^{n-1}.$$

For each $1 \leq i \leq n,\ i \neq \ell$ we have that $1 - g^{a_i} = (1-g^{a'_i}) + (1- g^{e_{\ell}}) \cdot y_i$ for some element $y_i \in \mathbb{F}_p[\mathbb{F}_p^n]$.

Notice that we have 
$$B = \prod_{1 \leq i \leq n} (1 - g^{e_i})^k \cdot   \prod_{1 \leq i \leq n, i \neq j} ((1-g^{a'_i}) + (1- g^{e_\ell}) \cdot y_i)^k  = 0 \in \mathbb{F}_p[\mathbb{F}_p^n].$$

We may write 
$$0 = B = \sum_{k \leq r \leq p-1} b_r (1 - g^{e_l})^r,$$ where $b_k \in \mathbb{F}_p[G].$
On the other hand we have 
$$b_k = \prod_{\substack{1 \leq i \leq n,\\ i \neq \ell}} (1 - g^{e_i})^k \cdot   \prod_{\substack{1 \leq i \leq n,\\ i \neq j}} (1 - g^{a'_i})^k \in \mathbb{F}_p[G],$$
so we obtain that
\begin{equation}
\prod_{\substack{1 \leq i \leq n,\\ i \neq \ell}} (1 - g^{e_i})^k \cdot   \prod_{\substack{1 \leq i \leq n,\\ i \neq j}} (1 - g^{a'_i})^k = 0 \in \mathbb{F}_p[G].
\end{equation}

However, this means that property (P4), i.e. the modulo $p$ group ring identity, holds for the invertible matrix $M_{k, j} \in M^{(n-1) \times (n-1)}$, which contradicts the induction hypothesis. This finishes the proof of part (i).

Now, part (ii) and part (iii) follow form Corollary~\ref{goodsubsets}.

\end{proof}

\section{The case of small primes and connections of group ring identities}\label{sec-small-primes}

In this section in the case of small primes we show the importance of the equivalence of the two group ring identities asked in Conjecture~\ref{41} and we show how the Alon-Jaeger-Tarsi conjecture would follow from it.

Now, we show how Conjecture~\ref{41} would imply the Alon-Jaeger-Tarsi conjecture:
\begin{theoremC}\label{impliesAT}
For every prime $p>3$ Conjecture~\ref{41} implies the Alon-Jaeger-Tarsi conjecture.
\end{theoremC}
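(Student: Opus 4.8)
The plan is to carry out an induction on $n$ paralleling the proof of Theorem~\ref{mainth}(i), but with the additive‑combinatorial input — which is exactly what fails for small primes — replaced by the set $\mathbb{F}_p^{*}$ itself, and with Conjecture~\ref{41} invoked once, at the very end, to close the induction. I would argue by contradiction: suppose the Alon--Jaeger--Tarsi conjecture fails for some prime $p>3$ and choose a nonsingular counterexample $M$ of minimal size $n\times n$. Since a $1\times1$ nonsingular matrix is never a counterexample, $n\ge2$. By Corollary~\ref{properties2}, $M$ satisfies property (P'3), hence also the mod $p$ group ring identity (P'4): $\prod_i(1-g^{e_i})\cdot\prod_i(1-g^{a_i})=0\in\mathbb{F}_p[\mathbb{F}_p^n]$, where $a_1,\dots,a_n$ are the rows of $M$.

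The first and main point I would establish is that for $p>3$ the full set $\mathbb{F}_p^{*}$ is itself $S_1$‑type (equivalently $N_1$‑type): for $a\in\mathbb{F}_p^{*}$ one can always place a $3$‑term arithmetic progression centred at $a$ inside $\mathbb{F}_p^{*}$ (using $\{a-1,a,a+1\}$, or another common difference when $a=\pm1$), and the only element outside $\mathbb{F}_p^{*}$, namely $0$, starts the $2$‑term progression $0,d$ for any $d\in\mathbb{F}_p^{*}$; for $p=3$ this breaks down, which is precisely where the hypothesis $p>3$ is used. Next, since $M$ — equivalently $M^{\top}$, the Alon--Jaeger--Tarsi property being transpose invariant — is a counterexample, the two ``boxes'' $\prod_i\mathbb{F}_p^{*}$ read off in the $e_i$‑coordinates and in the $a_i$‑coordinates are disjoint, i.e. $\sum_i y_ie_i\ne\sum_i z_ia_i$ for all $y_i,z_i\in\mathbb{F}_p^{*}$; this is just property (P'1) written out coordinatewise. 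Therefore the choice $A_i=B_i=\mathbb{F}_p^{*}$ satisfies exactly the hypotheses that the proof of Lemma~\ref{onemiss} actually uses — that the $A_i$ are $S_1$‑type, the $B_i$ are $N_1$‑type, and the two boxes are disjoint — the smallness of the sets playing no role there. Running that proof verbatim produces an index $j\in[n]$ with $\prod_i(1-g^{e_i})\cdot\prod_{i\ne j}(1-g^{a_i})=0\in\mathbb{F}_p[\mathbb{F}_p^n]$.

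From here I would repeat the closing step of the proof of Theorem~\ref{mainth}(i): choose $\ell$ so that $M_{\ell j}$ is nonsingular (cofactor expansion of $M$ along column $j$), split $a_i=a'_i+a_{i\ell}e_\ell$ with $a'_i$ in the subgroup $G=\langle e_1,\dots,\widehat{e_\ell},\dots,e_n\rangle\cong\mathbb{F}_p^{n-1}$, substitute $1-g^{a_i}=(1-g^{a'_i})+(1-g^{e_\ell})y_i$ into the identity above, and read off the coefficient of $(1-g^{e_\ell})^1$ in the basis $\{(1-g^{e_\ell})^r\cdot(\text{monomials on }G)\}$ of $\mathbb{F}_p[\mathbb{F}_p^n]$; one obtains $\prod_{i\ne\ell}(1-g^{e_i})\cdot\prod_{i\ne j}(1-g^{a'_i})=0\in\mathbb{F}_p[G]$, which is property (P'4) for the nonsingular $(n-1)\times(n-1)$ matrix $M_{\ell j}$. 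Finally I would invoke Conjecture~\ref{41} for $M_{\ell j}$: it upgrades this mod $p$ identity to the $\mathbb{Z}$ identity (P'3), so by Corollary~\ref{properties2} $M_{\ell j}$ is itself a counterexample — of strictly smaller size, contradicting minimality and completing the proof.

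I expect the real content to sit entirely in the second paragraph: the observation that for a genuine counterexample one may take the large sets $A_i=B_i=\mathbb{F}_p^{*}$, so the obstruction $S_1(p)^2\ge p-1$ behind Corollary~\ref{cor-s1} and Lemma~\ref{SN1} simply does not arise and Lemma~\ref{onemiss} becomes available for \emph{every} $p>3$. The role of Conjecture~\ref{41} is then a single decisive line — turning the mod $p$ conclusion for the $(n-1)\times(n-1)$ submatrix back into a bona fide smaller counterexample — which is exactly the implication that is not known unconditionally for small primes (for $p>61$, $p\ne79$ one instead rules out property (P4) itself, by Theorem~\ref{mainth}(i), with no need for the conjecture). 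The only routine points to check are that $\mathbb{F}_p^{*}$ is $S_1$/$N_1$‑type, transpose invariance of the property, and the bookkeeping identifying box‑disjointness with (P'1) and the coefficient extraction with passage to $M_{\ell j}$.
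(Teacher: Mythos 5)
Your proposal rests on the assertion that the box‑disjointness required by Lemma~\ref{onemiss}, taken with $A_i=B_i=\mathbb{F}_p^{*}$, namely
$$\sum_{1\le i\le n} y_ie_i\ \ne\ \sum_{1\le i\le n} z_ia_i\quad\text{for all }y_i,z_i\in\mathbb{F}_p^{*},$$
``is just property (P'1) written out coordinatewise.'' That identification is incorrect, and it is the crux of the argument. Writing $v=\sum z_ia_i$, the coefficients are $z_i=\langle a'_i,v\rangle$ where $a'_i$ is the $i$‑th \emph{column of $M^{-1}$}. So box‑disjointness says: there is no $v$ with $\langle e_i,v\rangle\neq0$ and $\langle a'_i,v\rangle\neq0$ for all $i$, i.e.\ (P'1) for the matrix whose rows are the $a'_i$, which is $(M^{-1})^{\top}=(M^{\top})^{-1}$, equivalently (via the inverse‑substitution $y=Nx$) (P'1) for $M^{\top}$. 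By contrast, the hypothesis that $M$ is a counterexample gives (P'1) for $M$: no $v$ with $\langle e_i,v\rangle\neq0$ and $\langle a_i,v\rangle\neq0$ for all $i$, where $a_i$ is the $i$‑th \emph{row} of $M$. These are genuinely different hyperplane‑cover statements — one involves rows of $M$, the other columns of $M^{-1}$ — so your ``coordinatewise'' reading conflates $Mx$ with the $\{a_i\}$‑coordinates of $x$.

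The only way to close this gap is the transpose‑invariance you invoke in passing (``equivalently $M^{\top}$, the Alon--Jaeger--Tarsi property being transpose invariant''), but you give no argument for it, and it is not established in the paper or by the standard manipulations: the symmetry of $\prod(1-g^{e_i})\prod(1-g^{a_i})$ in the two bases and the substitution $y=Mx$ only give (P'1) for $M$ iff (P'1) for $M^{-1}$, and (P'1) for $M^{\top}$ iff (P'1) for $(M^{\top})^{-1}$; neither links the two pairs. Lemma~\ref{duality} relates certain coefficients for $M$ and $M^{\top}$ but only in the ``top‑degree'' range and with asymmetric exponent ranges, so it does not yield the full equivalence either. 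This is precisely why the paper's proof of Theorem~C does \emph{not} run Lemma~\ref{onemiss}: it instead deletes a factor $(1-g^{e_i})$ over $\mathbb{Z}$ by invoking Conjecture~\ref{41} on an auxiliary space $V'\cong\mathbb{F}_p^{2n^2-n}$ built from two carefully engineered bases, and then applies the derivation $\partial_i$ together with invertibility of $M$ to remove a factor $(1-g^{a_j})$. Your observation that $\mathbb{F}_p^{*}$ is $S_1$‑ and $N_1$‑type for $p>3$ is correct and would make the argument pleasantly short, but without an actual proof of transpose invariance of (P'1) (or a direct proof that the cover property for $\{e_i\}\cup\{a_i\}$ implies the cover property for $\{e_i\}\cup\{a'_i\}$), the chain breaks at the very step you flag as ``the real content.''
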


\begin{proof}

For the sake of contradiction, let us assume that Conjecture~\ref{41} holds, but the Alon-Jaeger-Tarsi conjecture does not hold for a prime $p$.

Let $n$ be the smallest dimension such that there is a counterexample to it, and let $M \in M^{n \times n}(\mathbb{F}_p)$ be a counterexample with row vectors $a_i \ (i\in [n])$. Let us write $G := \mathbb{F}_p^n$.

From Proposition~\ref{properties} we know that the $\mathbb{Z}$ group ring identity $$\prod_{1 \leq j \leq n} (1 - g^{e_j}) \cdot   \prod_{1 \leq j \leq n} (1 - g^{a_j})  = 0 \in \mathbb{Z}[G]$$
holds and consequently the mod $p$ group ring identity 
$$\prod_{1 \leq j \leq n} (1 - g^{e_j}) \cdot   \prod_{1 \leq j \leq n} (1 - g^{a_j})  = 0 \in \mathbb{F}_p[G]$$
also holds.

Let us take an arbitrary index $ i \in [n]$, then first we prove the following lemma:

\begin{lemma}
We have 
$$\prod_{\substack{1 \leq j \leq n,\\ j \neq i}} (1 - g^{e_j}) \cdot   \prod_{1 \leq j \leq n} (1 - g^{a_j})  = 0 \in \mathbb{Z}[G].$$
\end{lemma}
\begin{proof}

Let us write $a_j = a'_j + a_{j, i} e_i$, then let us consider the subgroup $G':=\langle a'_1, \dots, a'_n \rangle = \langle e_1, \dots, e_{i-1}, e_{i+1}, \dots, e_n\rangle$.

For each $j\in [n]$ we have that $1 - g^{a_j} = (1-g^{a'_j}) + (1- g^{e_i}) \cdot y_j$ for some element $y_j \in \mathbb{Z}[G]$.

Observe that we have
\begin{equation*}
B := \prod_{1 \leq j \leq n} (1 - g^{e_j}) \cdot   \prod_{1 \leq j \leq n} ((1-g^{a'_j}) + (1- g^{e_i}) \cdot y_j)  = 0 \in \mathbb{F}_p[G].
\end{equation*}
\noindent
We may write 
$$0 = B = \sum_{1 \leq k \leq p-1} b_k (1 - g^{e_i})^k,$$
where $b_k \in \mathbb{F}_p[G']$.

On the other hand we have 
$$b_1 = \prod_{\substack{1 \leq j \leq n,\\ j \neq i}} (1 - g^{e_j}) \cdot   \prod_{1 \leq j \leq n} (1 - g^{a'_j}) \in \mathbb{F}_p[G'],$$
so we get that
\begin{equation*}
\prod_{\substack{1 \leq j \leq n,\\ j \neq i}} (1 - g^{e_j}) \cdot   \prod_{1 \leq j \leq n} (1 - g^{a'_j}) = 0 \in \mathbb{F}_p[G'].
\end{equation*}

Now, we claim that we also have
\begin{equation*}
 \prod_{\substack{1 \leq j \leq n,\\ j \neq i}} (1 - g^{e_j}) \cdot   \prod_{1 \leq j \leq n} (1 - g^{a'_j}) = 0 \in \mathbb{Z}[G'].   
\end{equation*}

Notice that this is a similar implication as the statement of Conjecture~\ref{41}, but we 
do not get it directly since the vectors $a'_j$ do not form a basis of the subgroup $G' \cong \mathbb{F}_p^{n-1}$, indeed there is one more vector.

Assume to the contrary that 
$$\prod_{\substack{1 \leq j \leq n,\\ j \neq i}} (1 - g^{e_j}) \cdot   \prod_{1 \leq j \leq n} (1 - g^{a'_j}) \neq 0 \in \mathbb{Z}[G'],$$
we will construct a counterexample to Conjecture~\ref{41} from this and from our original counterexample $M$ to the Alon-Jaeger-Tarsi conjecture.

\smallskip

We may assume (by reindexing, if necessary) that $\langle a'_2, \dots, a'_n \rangle = G'$.

Let us consider the mod $p$ vector space with the following basis vectors: 
$$V' = \langle e_1, \dots, e_n , e_{j, k}\ |\  1 \leq j \leq 2n,\ 1 \leq k \leq n,\  k \neq i \rangle,$$ so 
$V' \cong \mathbb{F}_p^{2n^2 - n}$.

Now, we will construct two  bases $B_1$ and $B_2$ of $V'$.

\noindent
Let us give $B_1$ in the following form as a disjoint union:
$$B_1 = B'_1 \cup \bigcup\limits_{1 \leq t \leq 2n} B'_{1, t},$$
where $B'_1= \{e_j + \sum\limits_{1 \leq k \leq n, k \neq i} a'_{1, k} e_{j, k}\ |\ j \in [n] \}$ and $B'_{1, t}=\{e_{t, k}\ |\ k\in[n],\ k \neq i\}$.

\medskip

\noindent
Similarly, let us give $B_2$ in the form $B_2 = B'_2 \cup \bigcup\limits_{1 \leq t \leq 2n} B'_{2, t}$,
where
$$B'_2= \{ \sum\limits_{1 \leq k \leq n} a_{j-n, k} e_{k} + \sum\limits_{\substack{1 \leq k \leq n,\\ k \neq i}} a'_{1, k} e_{j, k}\ |\ j \in [n+1, 2n] \},$$ 
$$B'_{2, t} = \{\sum\limits_{\substack{1 \leq k \leq n, \\ k \neq i}} a'_{\ell, k} e_{t, k}\ |\ \ell \in [2, n] \}.$$

\noindent
It can be easily checked that  $B_1$ and $B_2$ are two bases of $V'$:

\noindent
Let us write $v_j: = \sum\limits_{1 \leq k \leq n, k \neq i} a'_{1, k} e_{j, k}$ if $j \in [2n]$
and $w_j := \sum\limits_{1 \leq k \leq n} a_{j, k} e_{k}$ for $j \in [n]$.

First, we claim that 
$$\prod_{b \in B_1} (1-g^b) \cdot \prod_{b \in B_2} (1-g^b) = 0 \in \mathbb{F}_p[V'].$$

\noindent
For an index $t \in [2n]$ let us denote 
$$U_t := \prod\limits_{b \in B'_{1, t}} (1-g^b) \cdot \prod\limits_{b \in B'_{2, t}} (1-g^b).$$

\noindent
Notice that by the properties of the vectors $a'_1, \ldots, a'_n, e_1, \ldots, e_{i-1}, e_{i+1}, \ldots, e_n$
we know that for each $t \in [2n]$ we have $U_t \cdot (1 - g^{v_t})  = 0 \in \mathbb{F}_p[V']$.

Observe that for $j\in [n]$ we can write 
$$1 - g^{e_j + v_j} = (1 - g^{e_j}) + (1- g^{v_j}) z_j,$$
where $z_j \in \mathbb{F}_p[V']$.

Similarly, for $j \in [n]$ we can write $$1 - g^{w_j + v_{j+n}} = (1 - g^{w_j}) + (1- g^{v_{j+n}}) r_j,$$
where $r_j \in \mathbb{F}_p[V']$.

It follows from these equations that
\begin{equation*}
 \prod_{b \in B_1} (1-g^b) \cdot \prod_{b \in B_2} (1-g^b) = \prod_{1 \leq t \leq 2n} U_t \cdot \prod_{1 \leq j \leq n}(1 - g^{e_j}) \cdot \prod_{1 \leq j \leq n}(1 - g^{w_j}) \in \mathbb{F}_p[V'].
\end{equation*}

On the other hand, since $M$ is a counterexample to the Alon-Jaeger-Tarsi conjecture we know that 
$$\prod_{1 \leq j \leq n}(1 - g^{e_j}) \cdot \prod_{1 \leq j \leq n}(1 - g^{w_j}) = 0 \in \mathbb{F}_p[V'], $$
which indeed yields 
$$\prod_{b \in B_1} (1-g^b) \cdot \prod_{b \in B_2} (1-g^b) = 0 \in \mathbb{F}_p[V'].$$

\emph{Next, we will prove that on the other hand we have 
$$\prod_{b \in B_1} (1-g^b) \cdot \prod_{b \in B_2} (1-g^b) \neq 0 \in \mathbb{Z}[V'].$$}

\smallskip

For this we have to show that there is a vector $x \in V'$ such that $\langle x, b\rangle \neq 0$ if $b \in B_1$
or $b \in B_2$, where $\langle \cdot,\cdot \rangle$ denotes the standard scalar product corresponding to the basis $e_1, \dots, e_n , e_{j, k}$.

Let us fix the values of $x_1, \dots, x_n$ in an arbitrary way, we show that we can extend these
with values of $x_{j, k}$ in a proper way such that the conditions hold.

Indeed for each $j \in [2n]$ we get the conditions for the values $x_{j, k}$ that
$x_{j, k} \neq 0$ for each $k \in [n],\ k \neq i$ and $\sum\limits_{1 \leq k \leq n, k \neq i} a'_{\ell, k} x_{j, k} \neq 0$ for each $2 \leq \ell \leq n$ and $\sum\limits_{1 \leq k \leq n,\ k \neq i} a'_{1, k} x_{j, k} \neq c_j$
for some $c_j \in \mathbb{F}_p$.

Now from our assumptions on the vectors $a'_1, \dots, a'_n, e_1, \dots, e_{i-1}, e_{i+1}, \dots, e_n$
we can find a vector $y \in \langle e_{j, k}\ | \  1 \leq k \leq n,\ k \neq i\rangle$ such that the first two types of conditions hold for it and 
$$\sum\limits_{1 \leq k \leq n, k \neq i} a'_{1, k} y_{j, k} \neq 0.$$ 

If $c_j = 0$, then we can choose $x_{j, k} = y_{j, k}$ and if $c_j \neq 0$, then we can choose $x_{j, k} = \lambda \cdot y_{j, k}$ with some appropriate $0 \neq \lambda \in \mathbb{F}_p$.

Thus we have
$$\prod\limits_{b \in B_1} (1-g^b) \cdot \prod\limits_{b \in B_2} (1-g^b) \neq 0 \in \mathbb{Z}[V'],$$
which contradicts our assumption that Conjecture~\ref{41} holds.

\medskip

We proved that 
$$\prod_{\substack{1 \leq j \leq n,\\ j \neq i}} (1 - g^{e_j}) \cdot   \prod_{1 \leq j \leq n} (1 - g^{a'_j}) = 0 \in \mathbb{Z}[G'],$$
let us define 
$$Q := \prod_{\substack{1 \leq j \leq n,\\ j \neq i}} (1 - g^{e_j}) \cdot   \prod_{1 \leq j \leq n} (1 - g^{a_j}).$$ 

This means however that $Q \in \langle 1 - g^{e_i} \rangle \subseteq \mathbb{Z}[G]$, where $\langle 1 - g^{e_i}\rangle $ is the principal ideal generated by the element $1 - g^{e_i} \in \mathbb{Z}[G]$, let us have $Q = q \cdot (1 - g^{e_i})$.

Now, notice that we know that $Q \cdot (1 - g^{e_i}) = 0$, which means that $q \cdot (1 - g^{e_i})^2 = 0$.

However, in $\mathbb{Z}[G]$ the kernel of the multiplication with the element $(1 - g^{e_i})^2$ is the same as 
the kernel of the multiplication with the element $(1 - g^{e_i})$.

Indeed we get that $F(i(q)) \cdot F(i((1 - g^{e_i})))^2 = 0$, from which we obtain that 
$F(i(q)) \cdot F(i((1 - g^{e_i}))) = 0$.
By applying the inverse Fourier inversion we get indeed that $Q = q \cdot (1 - g^{e_i}) = 0$ which
proves our lemma completely.
\end{proof}

\bigskip

Now, let us return to the proof of Theorem C, we know that
\begin{equation}\label{eq1}
\prod_{1 \leq j \leq n} (1 - g^{e_j}) \cdot   \prod_{1 \leq j \leq n} (1 - g^{a_j})  = 0 \in \mathbb{F}_p[G].
\end{equation}

\noindent
For an element $ x = \sum\limits_{ v \in  \mathbb{F}_p^n} x_v g^v \in \mathbb{F}_p[G]$ and an index $i \in [n]$, let us define the operation 
$$\partial_i(x) := \sum\limits_{ v \in  \mathbb{F}_p^n} x_v  \cdot v_i \cdot g^v \in \mathbb{F}_p[G].$$

\noindent
It is easy to check that $\partial_i$ satisfies the property $\partial_i (x \cdot y) = \partial_i(x) \cdot y + x \cdot \partial_i(y)$ if $x, y \in  \mathbb{F}_p[G]$.

\noindent
If we apply the partial derivative $\partial_i$ to \eqref{eq1}, we get
\begin{equation*}
   \prod_{\substack{1 \leq j \leq n ,\\ j \neq i} } (1 - g^{e_j}) \cdot \prod_{1 \leq j \leq n} (1 - g^{a_j})  + \sum_{1 \leq k \leq n} a_{k, i} \cdot \prod_{1 \leq j \leq n} (1 - g^{e_j}) \cdot   \prod_{\substack{1 \leq j \leq n,\\ j \neq k}} (1 - g^{a_j}) = 0 \in \mathbb{F}_p[G].
\end{equation*}

On the other hand by our main lemma 
we know that $$\prod_{\substack{1 \leq j \leq n ,\\ j \neq i} } (1 - g^{e_j}) \cdot \prod_{1 \leq j \leq n} (1 - g^{a_j}) = 0 \in \mathbb{F}_p[G],$$
so we get for every $i\in [n]$ that
\begin{equation*}
 \sum_{1 \leq k \leq n} a_{k, i} \cdot \prod_{1 \leq j \leq n} (1 - g^{e_j}) \cdot   \prod_{\substack{1 \leq j \leq n,\\ j \neq k}} (1 - g^{a_j}) = 0 \in \mathbb{F}_p[G].
\end{equation*}

\noindent
We know that the matrix $M$ is invertible so we obtain that
\begin{equation*}
 \prod_{1 \leq j \leq n} (1 - g^{e_j}) \cdot   \prod_{2 \leq j \leq n} (1 - g^{a_j}) = 0 \in \mathbb{F}_p[G].
\end{equation*}

We may assume (by reindexing, if necessary)  that the determinant of the matrix obtained from $M$ by deleting the first row and first column is nonzero.

\noindent
Let us write $a_j = a_{j, 1} e_1 + a'_j$, then $a'_2, \ldots, a'_n$ is a basis of $G' = \langle e_2, \dots, e_n\rangle $.

\noindent
Since for $j \in [2, n]$ we have $1 - g^{a_j} = (1 - g^{a'_j}) + (1-g^{e_1}) y_j$, we get that
\begin{equation*}
z = \prod_{1 \leq j \leq n} (1 - g^{e_j}) \cdot   \prod_{\substack{1 \leq j \leq n,\\ j \neq 1}} (1 - g^{a'_j}) + (1-g^{e_1}) y_j = 0 \in \mathbb{F}_p[G].
\end{equation*}
We may write $0 = z = \sum\limits_{1 \leq k \leq p-1} (1 - g^{e_1})^k z_k$, where $z_k = 0 \in  \mathbb{F}_p[G']$.

\noindent
On the other hand we have 
$$z_1 = \prod\limits_{2 \leq j \leq n} (1 - g^{e_j}) \cdot   \prod\limits_{2 \leq j \leq n} (1 - g^{a'_j})  = 0 \in \mathbb{F}_p[G'].$$

\noindent
Hence, finally, we obtain that
\begin{equation*}
\prod_{2 \leq j \leq n} (1 - g^{e_j}) \cdot   \prod_{2 \leq j \leq n} (1 - g^{a'_j})  = 0 \in \mathbb{F}_p[G'].
\end{equation*}

However, using again the statement of Conjecture~\ref{41} we get this way a counterexample to the Alon-Jaeger-Tarsi conjecture with smaller dimension than $n$ which is a contradiction, since we took a minimal counterexample.

This contradiction finishes the proof of Theorem C.
\end{proof}

\bigskip

Let us prove finally a proposition which expresses Conjecture~\ref{41} in another form:

\begin{proposition}\label{symmetric}
Conjecture~\ref{41} is equivalent to the statement that if $n\geq 1$, then there exists no nonsingular matrix $M \in M^{n \times n}(\mathbb{F}_p)$ such that if $e_i \in \mathbb{F}_p^n\ ,i \in [n]$ are the standard basis vectors and $a_i \in \mathbb{F}_p^n\ ,i \in [n]$ are the row vectors of $M$, then $$\sigma_{2n-i}((1 - g^{e_1}), \dots, (1 - g^{e_n}), (1 - g^{a_1}), \dots, (1 - g^{a_n}) ) = 0 \in \mathbb{F}_p[\mathbb{F}_p^n] $$ for every $0 \leq i \leq p-1$, where $\sigma_{2n-i}$ are the elementary symmetric polynomials.
\end{proposition}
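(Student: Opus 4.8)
The plan is to compress the $p$ conditions of the proposition into a single mod $p$ group ring identity in one extra variable, and then to recognise that identity as property (P'4) in disguise. Put $w_i:=1-g^{e_i}$ and $v_i:=1-g^{a_i}$ in $R:=\mathbb{F}_p[\mathbb{F}_p^n]$; all $2n$ of these elements satisfy $z^{p}=0$. First I would form in $R[T]$ the monic polynomial
$$P(T):=\prod_{i=1}^{n}(T-w_i)\cdot\prod_{i=1}^{n}(T-v_i)=\sum_{j=0}^{2n}(-1)^{j}\,\sigma_{2n-j}\bigl(w_1,\dots,w_n,v_1,\dots,v_n\bigr)\,T^{j},$$
so that the coefficient of $T^{j}$ in $P(T)$ is $(-1)^{j}\sigma_{2n-j}$; hence the hypothesis of the proposition for $M$ is exactly the statement $P(T)\equiv 0\pmod{T^{p}}$, i.e. $P(T)=0$ in $R[T]/(T^{p})$. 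The key move is the isomorphism $R[T]/(T^{p})\cong R\otimes_{\mathbb{F}_p}\mathbb{F}_p[\mathbb{Z}/p]\cong\mathbb{F}_p[\mathbb{F}_p^{n+1}]$ given by $T\leftrightarrow 1-g^{e_{n+1}}$ (legitimate since $(1-g^{e_{n+1}})^{p}=1-g^{pe_{n+1}}=0$). Under it $T-(1-g^{u})=g^{u}-g^{e_{n+1}}=-g^{e_{n+1}}\bigl(1-g^{u-e_{n+1}}\bigr)$, so $P(T)$ equals, up to the unit $(-g^{e_{n+1}})^{2n}$, the element $\prod_{i}(1-g^{e_i-e_{n+1}})\prod_{i}(1-g^{a_i-e_{n+1}})$. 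Therefore the hypothesis of the proposition for $M$ is equivalent to
$$(\star)\qquad\prod_{i=1}^{n}\bigl(1-g^{e_i-e_{n+1}}\bigr)\cdot\prod_{i=1}^{n}\bigl(1-g^{a_i-e_{n+1}}\bigr)=0\ \in\ \mathbb{F}_p[\mathbb{F}_p^{n+1}].$$

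For the implication ``Conjecture~\ref{41} $\Rightarrow$ there is no such $M$'': if a nonsingular $M$ satisfied the hypothesis, then taking $j=0$ above gives $\sigma_{2n}(w,v)=\prod_i w_i\cdot\prod_i v_i=0$ in $\mathbb{F}_p[\mathbb{F}_p^n]$, which is property (P'4) of Corollary~\ref{properties2}; Conjecture~\ref{41} would then force property (P'3), i.e. that $M$ is a counterexample to the Alon-Jaeger-Tarsi conjecture, contradicting Theorem C (the truth of Conjecture~\ref{41} implies the Alon-Jaeger-Tarsi conjecture). This direction therefore uses only the trivial fact that $\sigma_{2n}$ occurs among the $\sigma_{2n-i}$, together with Theorem C.

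For the converse, assume no nonsingular matrix satisfies the hypothesis of the proposition and suppose for contradiction that Conjecture~\ref{41} fails: fix a nonsingular $M'\in M^{m\times m}(\mathbb{F}_p)$ satisfying (P'4) but not (P'3). Failure of (P'3) gives $d\in(\mathbb{F}_p^*)^m$ with $c:=M'd\in(\mathbb{F}_p^*)^m$; set $M'':=\operatorname{diag}(c)^{-1}\,M'\,\operatorname{diag}(d)$. Then $M''$ is nonsingular, every row of $M''$ sums to $1$, property (P'3) still fails for $M''$ (witnessed by the all-ones vector), and (P'4) still holds for $M''$: applying to the identity (P'4) for $M'$ the ring automorphism of $\mathbb{F}_p[\mathbb{F}_p^m]$ induced by the dilation $e_j\mapsto d_je_j$, and using that $(1-g^{\lambda u})$ differs from $(1-g^{u})$ by a unit (as in the proof of Lemma~\ref{testmodp}), transforms it into (P'4) for $M''$. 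Now apply the reformulation above to $M''$ (with $n=m$): since all row sums of $M''$ equal $1$, the $2m$ vectors $e_i-e_{m+1}$ and $a''_i-e_{m+1}$ lie in the hyperplane $W:=\{x\in\mathbb{F}_p^{m+1}:\sum_k x_k=0\}\cong\mathbb{F}_p^m$, where the $e_i-e_{m+1}$ form a basis and $a''_i-e_{m+1}=\sum_j a''_{i,j}(e_j-e_{m+1})$; as $\mathbb{F}_p[\mathbb{F}_p^{m+1}]$ is free over $\mathbb{F}_p[W]$, identity $(\star)$ for $M''$ is equivalent to $\prod_i(1-g^{e_i})\prod_i(1-g^{a''_i})=0$ in $\mathbb{F}_p[\mathbb{F}_p^m]$, i.e. to (P'4) for $M''$, which holds. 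Hence $M''$ satisfies the hypothesis of the proposition, contradicting our assumption.

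The two nontrivial ingredients are the identification $R[T]/(T^{p})\cong\mathbb{F}_p[\mathbb{F}_p^{n+1}]$ that turns the $p$ symmetric-function conditions into the single group ring identity $(\star)$, and the normalisation of a hypothetical counterexample to Conjecture~\ref{41} to one with all row sums equal to $1$ --- precisely the feature that makes $(\star)$ collapse back onto property (P'4). I expect the first of these to be the main conceptual hurdle; once it is in place, the rest is routine group ring bookkeeping and needs no additive-combinatorial input.
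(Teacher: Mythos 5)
Your proof is correct; both directions go through. The reformulation of the $p$ symmetric-function conditions as the single identity $(\star)$ in $\mathbb{F}_p[\mathbb{F}_p^{n+1}]$ via the isomorphism $R[T]/(T^{p})\cong\mathbb{F}_p[\mathbb{F}_p^{n+1}]$, $T\leftrightarrow 1-g^{e_{n+1}}$, is exactly the right encoding, and underneath it is the same mechanism the paper uses (the paper expands $\prod_j(1-g^{f+e_j})\prod_j(1-g^{f+a_j})$ directly and reads off the $\sigma_{2n-i}$ as coefficients in the $\mathbb{F}_p[\langle w\rangle]$-module decomposition; your $T-(1-g^{u})=-g^{e_{n+1}}(1-g^{u-e_{n+1}})$ is the same computation up to a unit). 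Where you genuinely diverge is in the easy direction (``Conjecture~\ref{41} $\Rightarrow$ no such $M$''): you observe that the $i=0$ condition alone already gives (P'4), and then invoke Conjecture~\ref{41} together with Theorem~C to reach a contradiction; the paper instead gives a self-contained construction, extending $\{f+e_j\}$ and $\{f+a_j\}$ to two bases of $\mathbb{F}_p^{n+1}$ for which the mod-$p$ identity holds but the vector $f$ witnesses that the $\mathbb{Z}$ identity fails. Your shortcut is legitimate (Theorem~C is proved earlier in the same section with no circularity) and in fact shows the stronger statement that, granting Theorem~C, Conjecture~\ref{41} is already equivalent to the non-existence of a nonsingular $M$ with (P'4) alone; but the paper's direct construction is what demonstrates that the extra conditions $\sigma_{2n-i}=0$ for $i\ge 1$ come for free from a single hypothetical counterexample, which is the point of the remark following the proposition. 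In the hard direction your normalisation $M''=\operatorname{diag}(c)^{-1}M'\operatorname{diag}(d)$ to row sums $1$, followed by the observation that all $2m$ shifted vectors lie in the hyperplane $W$ so that $(\star)$ collapses onto (P'4) inside $\mathbb{F}_p[W]\cong\mathbb{F}_p[\mathbb{F}_p^{m}]$, is a clean repackaging of the paper's step of dilating $b_j,b'_j$ into a common affine hyperplane $f+V'$ and then verifying the $(n+1)$-dimensional identity by the ``same number of factors mod $p$'' coincidence argument. Both are correct; yours makes the key algebraic reason (the product already lives in a proper group subring) more transparent, at the cost of leaning on Theorem~C in the other direction.
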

\begin{proof}

For brevity let us write $\sigma_{2n-i}((1 - g^{e_k}), (1 - g^{a_\ell}))$ for 
$$\sigma_{2n-i}((1 - g^{e_1}), \dots, (1 - g^{e_n}), (1 - g^{a_1}), \dots, (1 - g^{a_n}) ).$$

Let us assume first that there is a matrix $M \in M^{n \times n}(\mathbb{F}_p)$ such that $\sigma_{2n-i}((1 - g^{e_k}), (1 - g^{a_\ell})) = 0$ for every $0 \leq i \leq p-1$.

Let us consider the vector space $ V = \langle e_1, \dots, e_n, f\rangle  \cong\mathbb{F}_p^{n+1} $, where $f$ is a new basis vector.

\noindent
First, we claim that 
$$\prod_{1 \leq j \leq n} (1 - g^{f + e_j}) \cdot   \prod_{1 \leq j \leq n} (1 - g^{f + a_j})  = 0 \in \mathbb{F}_p[V].$$

\noindent
Indeed, we have $1 - g^{f + e_j} = (1-g^f) + g^f(1 - g^{e_j})$ and $(1-g^f)^{p} = 0$, so we obtain that
\begin{multline*}
\prod_{1 \leq j \leq n} (1 - g^{f + e_j}) \cdot   \prod_{1 \leq j \leq n} (1 - g^{f + a_j})= \\
=
\sum_{0\leq i \leq p-1} (1-g^f)^i \cdot g^{(2n - i)f} \cdot \sigma_{2n-i}((1 - g^{e_k}), (1 - g^{a_l})) = 0 \in \mathbb{F}_p[V].
\end{multline*}

Notice that the vectors $f + e_j$ are linearly independent, let us extend this system to a basis of $V$: 
$$b_1 = f + e_1 ,\  \dots,\ b_n = f + e_n ,\ b_{n+1} = f + v,$$
where $v \in \langle e_1, \dots, e_n \rangle$.

Similarly, let us take another basis of $V$: $$b'_1 = f + a_1 ,\ \dots,\ b'_n = f + a_n ,\ b'_{n+1} = f + v',$$
where $v' \in \langle e_1, \dots, e_n \rangle $.

Note that we have 
$$\prod_{1 \leq j \leq n+1} (1 - g^{b_j}) \cdot   \prod_{1 \leq j \leq n+1} (1 - g^{b'_j})= 0 \in \mathbb{F}_p[V].$$

On the other hand 
$$\prod_{1 \leq j \leq n+1} (1 - g^{b_j}) \cdot   \prod_{1 \leq j \leq n+1} (1 - g^{b'_j}) \neq 0 \in \mathbb{Z}[V],$$ since for the standard scalar product $\langle \cdot,\cdot\rangle $ for the basis $ e_1, \ldots, e_n, f$
one has $\langle f, b_j\rangle \neq 0$ for all $j\in [n] $ and $\langle f, b'_j \rangle \neq 0$ for all $j\in [n] $
which contradicts Conjecture~\ref{41}.

Let us assume now that Conjecture~\ref{41} is false, so there is a vector space $V \cong \mathbb{F}_p^n$
and two bases $b_1, \ldots, b_n \in V$ and $b'_1, \ldots, b'_n \in V$ such that:

\begin{equation*}
\prod_{1 \leq j \leq n+1} (1 - g^{b_j}) \cdot \prod_{1 \leq j \leq n+1} (1 - g^{b'_j})= 0 \in \mathbb{F}_p[V],
\end{equation*}
but there is a linear function $f: V \to \mathbb{F}_p$ such that $f(b_j) \neq 0\ (j\in [n])$ and $f(b'_j) \neq 0\ (j\in [n])$.

Observe that the identity 
$$\prod_{1 \leq j \leq n+1} (1 - g^{b_j}) \cdot \prod_{1 \leq j \leq n+1} (1 - g^{b'_j})= 0 \in \mathbb{F}_p[V]$$
remains true if we dilate the basis vectors $b_1, \ldots, b_n, b'_1, \ldots, b'_n $,
so we may assume that there is a $1$-codimensional subspace $V' \subseteq V$ and a vector $f \notin V'$
such that $b_1, \ldots, b_n, b'_1, \ldots, b'_n \in f + V'$.

Now, let us consider a vector space $V'': = \langle b_1, \dots, b_n, w \rangle$, where $b_1, \ldots, b_n, w$ is a basis of $V''$, we claim  that
\begin{equation}\label{gwquantified}
\prod_{1 \leq j \leq n+1} (1 - g^{b_j} \cdot g^w) \cdot \prod_{1 \leq j \leq n+1} (1 - g^{b'_j} \cdot g^w)= 0 \in \mathbb{F}_p[V^*].
\end{equation}

Indeed, different products of elements in the product 
\begin{equation}\label{eq-gbj}
\prod_{1 \leq j \leq n+1} (1 - g^{b_j}) \cdot \prod_{1 \leq j \leq n+1} (1 - g^{b'_j})
\end{equation}
can be the same if we choose the same amount of $g^{b_j}, g^{b'_j}$ form the brackets modulo $p$ because of the condition that $b_1, \ldots, b_n, b'_1, \ldots, b'_n \in f + V'$.

This indeed means that \eqref{gwquantified} should hold, since on the left hand side if we expand the brackets then the terms that coincide are exactly the same as the terms that coincide in \eqref{eq-gbj}.

Therefore, we get that
\begin{equation*}
\prod_{1 \leq j \leq n+1} (1 - g^{b_j + w}) \cdot \prod_{1 \leq j \leq n+1} (1 - g^{b'_j + w})= 0 \in \mathbb{F}_p[V^*].
\end{equation*}

Now we have $1 - g^{w + b_j} = (1-g^w) + g^w(1 - g^{b_j})$ and $(1-g^w)^{p} = 0$, so we obtain that
\begin{multline*}
\prod_{1 \leq j \leq n+1} (1 - g^{b_j + w}) \cdot \prod_{1 \leq j \leq n+1} (1 -  g^{b'_j + w})= \\
=
\sum_{0\leq i \leq p-1} (1-g^w)^i \cdot g^{(2n - i)w} \cdot \sigma_{2n-i}((1 - g^{b_k}), (1 - g^{b'_\ell})) = 0 \in \mathbb{F}_p[V''].
\end{multline*}

It is easy to see that the elements $(1-g^w)^i \cdot g^{(2n - i)w}\ (0 \leq i \leq p-1)$ form a basis of
$\mathbb{F}_p[\langle w\rangle]$, so this yields that $\sigma_{2n-i}((1 - g^{b_k}), (1 - g^{b'_\ell})) = 0$ for $0 \leq i \leq p-1$,
which finishes the proof of the  proposition.
\end{proof}

We see from the preceding proposition that proving Conjecture~\ref{41} might be easier than  proving that the mod $p$ group ring identity 
$$\prod_{1 \leq j \leq n+1} (1 - g^{b_j}) \cdot \prod_{1 \leq j \leq n+1} (1 - g^{b'_j})= 0 \in \mathbb{F}_p[V]$$
cannot hold for two bases of a vector space $V$, indeed, we get $p-1$ more equations.

\section{Concluding remarks and open questions}\label{sec-open}

In this paper we proved that the Alon-Jaeger-Tarsi conjecture holds for primes $61<p\ne 79$. More generally, it was shown that for any $k\geq 1$ it is possible to find a vector $x\in\mathbb{F}_p^n$ such that all entries of $x$ and $Mx$ avoid $k$ chosen residues (which can be chosen independently for each entry), assuming that $p$ is sufficiently large (compared to $k$). For our methods to work we need that the inequality $S_k(p)N_k(p)<p-1$ should hold for the prime $p$.

From the proof of Lemma~\ref{Skepsilon} it is easy to see  that for a fixed $k \geq 2$ and $\varepsilon > 0$ 
$S_k(p) = o(p^{\varepsilon})$, which bound is still much worse than the magnitude $\log_2(p)$ in the case $k=1$. 

\noindent
We pose the following two questions:
\begin{question}
What is the order of magnitude of $S_k(p)$ and $N_k(p)$ in general for fixed $k$?
\end{question}

\begin{question}
Let $k \geq 2$ be an integer. What is the order of magnitude of the smallest prime $p$ such that $p-1 > N_k(p) \cdot S_k(p)$?
\end{question}

We have seen that the key idea of proving the main results in Theorem~\ref{mainth} was to use some kind of duality.
More specially we use the falseness of a statement which is similar to DeVos' conjecture in the dual setup in an extremal case when it is false tautologically by a simple counting argument.

On the other hand this duality argument fails at the first glance in the case when we have more than two bases.
We pose the following conjecture about this situation (for a proof of a weaker form of this kind of statement see \cite{Alweiss}):
\begin{conjecture}\label{conj-AJTmorebase}
Let $k \geq 3$ be an integer. Then there exists a threshold $C(k)$ such that if $p > C(k)$, and we have
$k$ nonsingular matrices $M_1, \dots, M_{k}$ over $\mathbb{F}_p$, then there is a vector $x$ such that each of the vectors $M_ix$ ($1\leq i\leq k$) have only nonzero entries.
\end{conjecture}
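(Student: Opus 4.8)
The plan is to run the argument behind Theorem~\ref{mainth} with $k$ coordinate systems instead of two. After normalising $M_1=I$ (replace $x$ by $M_1^{-1}x$ and each $M_i$ by $M_iM_1^{-1}$), a counterexample $M_1,\dots,M_k$ to Conjecture~\ref{conj-AJTmorebase} in dimension $n$ would, by the Fourier computation behind the implication (P1)$\to$(P3)$\to$(P4) of Proposition~\ref{properties} applied to the list of $kn$ linear forms, yield the mod $p$ group ring identity
$$\prod_{i=1}^{n}(1-g^{e_i})\cdot\prod_{m=2}^{k}\prod_{i=1}^{n}(1-g^{a^{(m)}_i})=0\in\mathbb{F}_p[\mathbb{F}_p^n],$$
where $a^{(m)}_i$ denotes the $i$-th row of $M_m$. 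So it suffices to show that this identity is impossible for invertible $M_2,\dots,M_k$ once $p>C(k)$, and the plan is an induction on $n$: the case $n=1$ reduces to the fact that $(1-g)^k\ne0\in\mathbb{F}_p[\mathbb{F}_p]$ (each $1-g^c$ with $c\ne0$ being a unit multiple of $1-g$), which holds for $p>k$, while the inductive step would delete one factor $(1-g^{a^{(m)}_i})$ from the product without destroying the identity and then pass to an $(n-1)\times(n-1)$ submatrix exactly as at the end of the proof of Theorem~\ref{mainth}.

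The additive-combinatorial fuel generalises painlessly, and in fact costs nothing extra. Running the union bound of Lemma~\ref{randomdisjoint} with $k$ boxes in place of two (pick independent coordinatewise dilations $\lambda^{(m)}_i\in\mathbb{F}_p^*$ of the $m$-th box for $m\ge2$) and feeding in Lemma~\ref{SN1}, one gets: for any bases $\mathcal B_1,\dots,\mathcal B_k$ of $\mathbb{F}_p^n$ there are $S_1$-type sets $A^{(m)}_i\subseteq\mathbb{F}_p^*$ whose boxes $\prod_iA^{(m)}_i$, read in the coordinate system $\mathcal B_m$, have empty common intersection, provided $S_1(p)^{kn}<(p-1)^{(k-1)n}$, i.e.\ $S_1(p)<(p-1)^{(k-1)/k}$. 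Since $(k-1)/k\ge\tfrac23$ for $k\ge3$, this is weaker than the bound $S_1(p)<(p-1)^{1/2}$ already supplied by Corollary~\ref{cor-s1}, so the box construction works for all $k$ as soon as $61<p\ne79$; the whole difficulty lies in the induction, not in the combinatorics. This $k$-fold ``DeVos failure'' (the affine slices $(M_mx)_i\in\mathbb{F}_p\setminus A^{(m)}_i$ cover $\mathbb{F}_p^n$ tautologically) is what should take the place of Corollary~\ref{goodsubsets} as input to a $k$-ary version of Lemma~\ref{onemiss}.

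The real obstacle, and the reason this stays a conjecture, is the factor-deletion step, and it is precisely where the paper's duality collapses beyond two bases. In the two-base proof of Lemma~\ref{onemiss} one builds $S$ from the first box, notes that each $u\in S$ lies outside the second box and therefore has a coordinate in $\mathcal B_2$ that is ``free'', picks $u$ with the set of non-free coordinates maximal, and strips off a free factor; both this choice and the duality of Lemma~\ref{duality} that motivates it live on the dichotomy between exactly two coordinate systems played against one another. With $k\ge3$ a vector $u$ in the first box only fails to lie in the \emph{common} intersection of the other $k-1$ boxes, so it has a free coordinate in \emph{some} $\mathcal B_m$ but no designated single factor one can safely delete, and the expected multi-ary analogue of Lemma~\ref{duality} — simultaneously interchanging three or more coordinate systems — does not seem to exist. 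Insisting that the boxes be pairwise disjoint would restore a free coordinate in every $\mathcal B_m$, but pairwise disjointness of boxes of size $S_1(p)^n$ is perfectly consistent with the mod $p$ identity (which controls vanishing of coefficients, not of boxes), so it alone does not drive an induction. I expect a genuine proof to need either the simultaneous deletion of factors from several of the $M_m$, synchronised with a more delicate choice of the boxes, or an entirely new handle on the multi-base identity; for comparison, a version with threshold depending on $n$ is already immediate from the trivial bound $p>kn$ together with the reduction above, which is why the weaker form recorded in \cite{Alweiss} falls short of the uniform-in-$n$ statement sought here.
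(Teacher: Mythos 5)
The statement you were given is not a theorem of the paper but an open conjecture (Conjecture~\ref{conj-AJTmorebase}), stated in the concluding remarks; the authors offer no proof and explicitly note, just before posing it, that their duality argument ``fails at the first glance in the case when we have more than two bases.'' So there is no proof in the paper against which to grade your attempt, and you do not pretend to give one — what you have written is a diagnosis of the obstruction, and it is an accurate one.

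Your accounting of the pieces that \emph{do} transfer is correct. The reduction to a mod $p$ group ring identity with $kn$ factors goes through exactly as in the implications (P1)$\to$(P3)$\to$(P4) of Proposition~\ref{properties}, since that Fourier argument is agnostic to how many linear forms appear. The $k$-box version of Lemma~\ref{randomdisjoint} is as you say: with independent coordinatewise dilations $\lambda^{(m)}_i$ of boxes $m=2,\dots,k$, the probability that a fixed tuple collides in all $k-1$ equations is at most $(p-1)^{-n(k-1)}$, and the union bound over $S_1(p)^{kn}$ tuples yields the requirement $S_1(p)<(p-1)^{(k-1)/k}$, which for $k\ge 3$ is strictly weaker than the two-base condition $S_1(p)<(p-1)^{1/2}$ of Corollary~\ref{cor-s1}. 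You are right that the combinatorics costs nothing extra. You are also right to pinpoint the factor-deletion step. In Lemma~\ref{onemiss} the choice of the index $j\notin I_u$ to strip, and the duality in Lemma~\ref{duality} that motivates it, both rest on the dichotomy between exactly two coordinate systems playing off one another: a vector $u$ in the first box is guaranteed some free coordinate in the \emph{one} remaining basis, and the transpose identity of Lemma~\ref{duality} relates coefficients between those \emph{two} coordinate systems. With $k\ge 3$ a vector in the first box only escapes the common intersection, so the free coordinate sits in a basis $\mathcal B_m$ depending on $u$, and there is no canonical single factor to delete; you are also right that forcing pairwise disjointness of the boxes (which the counting permits) does not by itself supply an inductive target, since the mod $p$ identity is about vanishing of group-ring coefficients, not about boxes. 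In short: this is a sound and faithful analysis of why Theorem~\ref{mainth}'s proof does not extend, consistent with the authors' decision to leave the $k\ge 3$ case open, and nothing in it misrepresents the state of the argument.
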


\newpage{}

\section*{Appendix}

Our methods imply that Conjecture~\ref{conj-AJT} holds for a prime $p$ if $S_1(p)<\sqrt{p-1}$.
With the help of Lemma~\ref{SN1} it can be seen that $S_1(p)<\sqrt{p-1}$ if $199\leq p\ne 257$ is a prime. By using computer calculations we could check that in fact $S_1(p)<\sqrt{p-1}$  holds for a prime $p$ if and only if $61<p\neq 79$. In the table below we give constructions showing that $S_1(p)<\sqrt{p-1}$ also holds for the primes lying in the set $(61,199)\cup\{257\}\setminus \{79\}$, which cases are not yet covered by Lemma~\ref{SN1}. Up to 151 a brute force exhaustive search gave the optimal constructions.  From 157 we do not know whether the listed constructions are optimal. These were found by using some heuristics, and their size is small enough for our purposes. In the table below for each prime $p\in (61,199)\cup\{257\}\setminus \{79\}$ we give a list of elements of $\mathbb{F}_p$ that form an $S_1$-type subset and also its size which is smaller than $\sqrt{p-1}$ in each case.

\begin{center}
\begin{tabular}{ |c||c|c| } 
\hline
$p$ &  construction & size \\
\hline
67 & 0, 1, 3, 6, 11, 35, 54, 66 & 8  \\
\hline
71  & 0, 1,  3, 14, 36, 55, 63, 70 & 8 \\
 \hline
73 & 0, 1, 3, 6, 20, 37, 65, 72 & 8 \\
 \hline
83 & 0, 1,  2, 3, 7, 14, 44, 67, 82 & 9 \\
 \hline
89 & 0, 1, 2, 4, 8, 15, 47, 72, 88 & 9 \\
 \hline
97 & 0, 1,  2, 4, 8, 26, 51, 87, 96 & 9 \\
 \hline
101 & 0, 1, 2, 5, 9, 18, 56, 81, 100 & 9 \\
  \hline
103 & 0, 1, 2, 5, 10, 19, 59, 82, 102 & 9\\
  \hline
107 & 0, 1, 2, 4, 9, 29, 56, 96, 106 & 9 \\
  \hline
109 & 0, 1, 2, 5, 9, 29, 58, 98, 108 & 9 \\
  \hline
113 & 0, 1, 2, 5, 17, 29, 58, 94, 112 & 9\\
   \hline
127 & 0, 1, 2, 5, 17, 65, 96, 120, 126 & 9 \\
   \hline
131 & 0, 1, 3, 6, 11, 35, 70, 118, 130 & 9\\
   \hline
137 & 0, 1,  3, 6, 37, 62, 73, 87, 136 & 9 \\
   \hline
139  & 0, 1, 3, 6, 12, 38, 73, 125, 138 & 9\\
   \hline
149  & 0, 1, 3, 6, 20, 76, 113, 141, 148 & 9 \\
   \hline
151 & 0, 1, 3, 6, 20, 76, 115, 143, 150 & 9\\
   \hline
157 &  0, 1, 3, 6, 9, 15, 29, 89, 126, 156 & 10 \\
   \hline
163 & 0, 1, 3, 6, 9, 17, 28, 86, 133, 162 & 10\\
  \hline
167 & 0, 1, 3, 6, 9, 17, 31, 95, 134, 166 & 10 \\
   \hline
173 & 0, 1, 3, 6, 9, 17, 34, 59, 112, 172 & 10 \\
  \hline
179 & 0, 1, 3, 6, 9, 17, 34, 62, 115, 178 & 10 \\
   \hline
181 & 0, 1, 3, 6, 9, 17, 81, 134, 162, 180 &  10\\
 \hline
191 & 0, 1, 3, 6, 9, 17, 53, 100, 172, 190 & 10 \\
   \hline
193 & 0, 1, 3, 6, 9, 17, 34, 104, 157, 192 & 10 \\
  \hline
197 & 0, 1, 3, 6, 9, 17, 53, 106, 178, 196 & 10 \\
   \hline
257 & 0, 1, 3, 6, 9, 18, 35, 61, 87, 168, 256 & 11 \\
  \hline
\end{tabular}
\end{center}

\section{Acknowledgements} 
The authors would like to thank Noga Alon for pointing out the connections with  \cite{Alon-Ben} and \cite{Alweiss}.
The authors would like to thank Richárd Palincza for providing help in the computer calculations needed to find the constructions listed in the Appendix.


\begin{thebibliography}{30}


\bibitem{Akbari}
S.~Akbari, K.~Hassani Monfared, M.~Jamaali, E.~Khanmohammadi, D.~Kiani,
On the existence of nowhere-zero vectors for linear transformations,
Bull. Aust. Math. Soc. 82 (2010) no. 3, 480--487.


\bibitem{Aichinger}
E.~Aichinger and J.~Moosbauer,
Chevalley-Warning type results on abelian groups,
Journal of Algebra, 569, 30--66.




\bibitem{Alon}
N.~Alon,
Combinatorial Nullstellensatz,
Combinatorics Probability and Computing 8 (1999) no. 1-2, 7--29. 

\bibitem{Alon-et-al}
N.~Alon, N.~Linial, R.~Meshulam, 
Additive bases of vector spaces over prime fields,
J. Combinatorial Theory Ser. A 57 (1991) 203--210.


\bibitem{Alon-Ben}
N.~Alon, O.~Ben-Eliezer, C.~Shangguan and I.~Tamo,
The hat guessing number of graphs,
J. Combinatorial Theory, Ser. B 144 (2020) 119--149.


\bibitem{Alon-Tarsi}
N.~Alon and M.~Tarsi, 
A nowhere-zero point in linear mappings,
Combinatorica 9 (4) (1989) 393--395.


\bibitem{Alweiss}
R.~Alweiss, S.~Lovett, K.~Wu, J.~Zhang, 
Improved bounds for the sunflower lemma, 
arXiv: 1908.08483

\bibitem{Baker} 
R.~Baker, J.~Bonin, F.~Lazebnik, E.~Shustin,
On the number of nowhere-zero points in linear mappings,
Combinatorica 14 (2) (1994) 149--157. 



\bibitem{Ball}
S.~Ball, M.~Laveauw,
How to use R\'edei polynomials in higher dimensional spaces,
Le Matematiche (Catania), 59 (2004) 39--52.



\bibitem{DeVos}
M.~DeVos,
Matrix choosability,
J. Combin. Theory Ser. A 90 (2000) no. 1, 197--209.

\bibitem{Jaeger} 
F.~Jaeger,
Problem presented in the 6th Hungar. Comb. Coll., Eger, Hungary 1981, and: Finite and Infinite Sets (eds.: Hajnal, A., Lov\'asz, L., S\'os, V. T.). North Holland, Amsterdam,
1982 II, 879. 


\bibitem{Kirkup}  
G.~A.~Kirkup,
Minimal primes over permanental ideals,
Trans. Amer. Math. Soc. 360 (2008) 3751--3770.

\bibitem{Szegedy}
B.~Szegedy,
Coverings of abelian groups and vector spaces,
J. Combin. Theory Ser. A 114 (2007) no. 1, 20--34.


\bibitem{Yu}
Y.~Yu,
The Permanent Rank of a Matrix,
Journal of Combinatorial Theory, Series A  85 (2) (1999) 237--242.





\end{thebibliography}
\end{document}